\numberwithin{equation}{section}
\definecolor{webgreen}{rgb}{0,.5,0}
\definecolor{webbrown}{rgb}{.8,0,0}
\definecolor{emphcolor}{rgb}{0.95,0.95,0.95}
\newtheorem{theorem}{Theorem}[section]
\newtheorem{lemma}[theorem]{Lemma}
\newtheorem{proposition}[theorem]{Proposition}
\newtheorem{assumption}{Assumption}[section]
\newtheorem{definition}[theorem]{Definition}
\theoremstyle{definition}
\newtheorem{remark}[theorem]{Remark}
\numberwithin{equation}{section}
\newcommand{\nc}{\newcommand}
\nc{\ind}{\mathds{1}}
\newcommand{\R}{\mathbb{R}}
\newcommand{\E}{\mathcal{E}}
\newcommand{\F}{\mathcal{F}}
\newcommand{\T}{\mathcal{T}}
\newcommand{\talpha}{\tilde{\alpha}}
\DeclareMathOperator{\esssup}{esssup}
\def\esssup_#1{\underset{#1}{\mathrm{ess\,sup\, }}}
\def\essinf_#1{\underset{#1}{\mathrm{ess\,inf\, }}}
\def\argmax_#1{\underset{#1}{\mathrm{arg\,max\, }}}
\def\argmin_#1{\underset{#1}{\mathrm{arg\,min\, }}}
\def\b1{\bf 1}
\def \R{\mathbb{R}}
\def \E{\mathbb{E}}
\def \F{\mathbb{F}}
\def \P{\mathbb{P}}
\def \S{\mathbb{S}}
\def \T{\mathbb{T}}
\def \Ac{{\cal A}}
\def \Bc{{\cal B}}
\def \Cc{{\cal C}}
\def \Dc{{\cal D}}
\def \Ec{{\cal E}}
\def \Fc{{\cal F}}
\def \Gc{{\cal G}}
\def \Kc{{\cal K}}
\def \Lc{{\cal L}}
\def \Pc{{\cal P}}
\def \Mc{{\cal M}}
\def \Tc{{\cal T}}
\def\beqs{\begin{eqnarray*}}
	\def\enqs{\end{eqnarray*}}
\def\beq{\begin{eqnarray}}
	\def\enq{\end{eqnarray}}
\title{On time-inconsistent extended mean-field control problems with common noise}
\author{Zongxia Liang\thanks{Department of Mathematical Sciences, Tsinghua University, Beijing, China. \url{ liangzongxia@tsinghua.edu.cn}}
\and Xiang Yu\thanks{Department of Applied Mathematics,  The Hong Kong Polytechnic University, Kowloon, Hong Kong. \url{xiang.yu@polyu.edu.hk}}
	\and Keyu Zhang\thanks{
	Department of Mathematical Sciences, Tsinghua University, Beijing , China. \url{Zhangky21@mails.tsinghua.edu.cn}}}
\date{}
\begin{document}
	
	\maketitle
 \vspace{-0.2in}
	\begin{abstract}
		This paper studies a class of time-inconsistent mean field control (MFC) problems in the presence of common noise under non-exponential discount and joint law dependence of both state and control. {We investigate the closed-loop time-consistent equilibrium strategies for these extended MFC problems and characterize them through an equilibrium Hamilton-Jacobi-Bellman (HJB) equation defined on the Wasserstein space.}  We first apply the results to the linear-quadratic (LQ) time-inconsistent MFC problems and obtain the existence of time-consistent equilibria via a comprehensive study of a nonlocal Riccati system. To illustrate the theoretical findings, two financial applications are presented. We then examine a class of non-LQ time-inconsistent MFC problems, for which we contribute the existence of time-consistent equilibria by analyzing a nonlocal nonlinear partial differential equation.
		
	\vspace{0.05in}	
	\noindent \textbf{Mathematics Subject Classification (2020)}: 93E20, 60H30, 91A13\\
 \noindent\textbf{Keywords:} Extended mean-field control, common noise, time-inconsistency, closed-loop equilibrium, nonlocal Riccati system, nonlocal PDE
 
	\end{abstract}
	
	\section{Introduction}

         Mean-field control (MFC) problem, or optimal control of McKean-Vlasov dynamics, has received a surge of attention in recent years due to its wide applications and new advances in mean field theory. Comparing with mean-field game problem that models the competitive interactions of agents, the MFC problem considers the
cooperative interactions where all agents jointly optimize the social optimum that leads to the
optimal control by the social planner.
         In the existing literature, two main approaches have been used to tackle MFC problems in various context, namely, the probabilistic approach based on stochastic maximum principle and the PDE approach on the Wasserstein space on strength of dynamic programming principle. 
         By employing the stochastic maximum {principle} and the adjoint 
         backward stochastic differential equation (BSDE) coupled with a forward SDE, fruitful studies can be found among \cite{ABC17, Andersson2011,Buckdahn2011, Li12,Carmona2015, BLPR17, BLLX23} for controlled McKean-Vlasov dynamics without common noise and also in 
        \cite{Buckdahn2017,Carmona2016, BWWY} for conditional McKean–Vlasov dynamics in the presence of common noise. Along the direction of PDE approach for open-loop and closed-loop controls and the study of viscosity solution on the Wasserstein space, we refer some recent developments in \cite{Bensoussan2015,Lauriere2016, Pham2016, BCP18, Pham2018, WZ20, ZTZ24}.

        All aforementioned studies on MFC problems are time-consistent, i.e., the optimal control determined today remains optimal in the future. However, in a real-life situation, it often occurs that an agent may deviate from the optimal decision deemed previously, leading to \textit{time-inconsistent} behavior. For instance, when the cost function involves non-exponential discount, the decision making is inherently time-inconsistent. In such case, it is not meaningful to determine an initial optimal control that may not be obeyed at future dates. To address the issue of time inconsistency, the intra-personal equilibrium approach under time-consistent planning is commonly employed; see \cite{STR}. This approach mandates the agent to take into account the behavior of future selves and to seek the control strategy by game-theoretic thinking, thereby ensuring the time-consistency of the chosen strategy; see, e.g., \cite{Ekeland2008,Ekeland2012,Yong2012,Hu2012,Bjork2017,He2021,CCPW,LM23} and references therein for single agent's time-inconsistent control problems in the continuous-time framework.

        In the present paper, we are interested in the \textit{extended} MFC problems where the mean-field interactions occur via the joint conditional law of both state and control in the presence of common noise. In particular, we embrace the time-inconsistency issue caused by non-exponential discount, and we are devoted to the characterization and the verification of closed-loop time-consistent equilibrium strategies in the mean field model. Notably, previous studies on time-inconsistent MFC problems, such as \cite{Ni2017,Yong2017,Ni2019,Wang2019}, only focus on some special cases in linear-quadratic (LQ) MFC problems with marginal state law dependence and without common noise. In a more general model setup, \cite{zhuchao2020} studies some time-inconsistent MFG problems. Recently, \cite{YY23} investigates a time-inconsistent mean-field Markov decision process with stopping but in a discrete-time setting. To the best of our knowledge, our work is the first study that provides the characterization of closed-loop equilibrium strategies for time inconsistent MFC probelms in a general framework, where the McKean-Vlasov SDE involves the joint conditional law dependence and common noise.        
        
     More precisely, we first transform the original problem into a stochastic control problem where the conditional law given the common noise is regarded as the state variable (see \eqref{formulationcon}). {Based on the game-theoretic thinking with future selves, we provide a necessary and sufficient condition for a closed-loop  equilibrium strategy satisfying the time-consistent planning in the mean field setting; see Proposition \ref{Theorem:charactization}}. As a result of It\^{o}'s formula along the flow of conditional probability measures developed in \cite{Buckdahn2017}, an { equilibrium Hamilton-Jacobi-Bellman (HJB) equation} defined on the Wasserstein space of probability measures
        can be derived that gives the equivalent characterization of the closed-loop equilibrium and the resulting value function; see the { equilibrium HJB equation} \eqref{masterequation:valuefunction}-\eqref{masterequation:equilibrium} and the Verification Theorem \ref{corollary:verification}.
We first apply the equilibrium HJB characterization to time-inconsistent LQ extended MFC problems and reduce the problem to a nonlocal Riccati equation system \eqref{riccati1}-\eqref{linearode2}. {Specifically, the nonlocal feature of \eqref{riccati1}-\eqref{linearode2} appears in the diagonal dependence by \((\tau, t)\), i.e., the system depends on the value at \((t, t)\). Alternatively, one may treat \(\tau\) as a parameter and interpret \eqref{riccati1}-\eqref{linearode2} as an infinite-dimensional system of matrix-valued ordinary differential equations (ODEs), which exhibits self-interaction through the diagonal term.   A key theoretical contribution of this paper is the establishment of the well-posedness of the Riccati equation system in the mean-field framework; see Proposition \ref{Pro:riccati}. As the system is decoupled, we can solve the equations sequentially. For the first two equations, \eqref{riccati1}-\eqref{riccati2}, we employ a convergence method to establish existence. Specifically, we introduce a sequence of Riccati equations and Lyapunov equations, whose solutions are then used to construct functions that converge to the diagonal and non-diagonal values of the solution to the nonlocal system, respectively. The uniqueness of \eqref{riccati1}-\eqref{riccati2} is established via a contradiction argument. For the third equation, \eqref{linearode1}, we exploit its linearity and apply standard contraction arguments to ensure the well-posedness.  The solution to \eqref{linearode2} follows directly by integration. Thanks to Verification Theorem \ref{corollary:verification}, the characterization of a closed-loop equilibrium is established therein.} In two examples of financial applications, namely, the conditional mean-variance portfolio selection and an inter-bank systemic risk model with common noise, we illustrate our theoretical results and discuss some financial implications of the obtained equilibrium strategies. { Finally, a class of time-inconsistent non-LQ extended MFC problems is also investigated, in which we can utilize the conjectured form of the solution in
\eqref{newsect:ansatz} to reduce the equilibrium HJB on Wasserstein space to  a nonlocal nonlinear PDE in \eqref{newsect:nonlocalpde}. Although nonlocal nonlinear PDE has been studied previously in \cite{wei_time-inconsistent_2017, zhuchao2020, wang_backward_2019, lei_nonlocal_2023, lei_nonlocality_2024}, the global well-posedness remains an open problem. Existing studies, such as \cite{wei_time-inconsistent_2017, zhuchao2020, wang_backward_2019, lei_nonlocality_2024}, typically impose strong conditions on the nonlinear operator, leaving a large class of problems such as our case unaddressed. In this work, we employ a fixed-point approach and exploit the special structure of \eqref{newsect:nonlocalpde}. Under a smallness condition on model coefficients, we prove that the nonlocal PDE \eqref{newsect:nonlocalpde} has a unique classical solution in the global sense and then deduce the characterization of a closed-loop equilibrium for the time-inconsistent non-LQ MFC problem  with the aid of the Verification Theorem \ref{corollary:verification}; see Proposition \ref{newsect:lemma}.}

        The rest of the paper is organized as follows.  Section \ref{sec:pro:formulation} introduces the notations and the model setup of general time-inconsistent MFC problems and the definition of closed-loop equilibrium strategies.  {Section \ref{sec:characterization} presents the equilibrium HJB equation and a verification theorem to characterizes the closed-loop time-consistent equilibria.} Section \ref{sec:app} focuses on the LQ-type time-inconsistent MFC problems and contributes the characterization of the time-consistent equilibria via the study of a nonlocal Riccati system. {Section \ref{sect:nonlq} examines a class of non-LQ MFC problems by analyzing the associated nonlocal nonlinear PDE problem.} Finally, Section \ref{auxproofs} collects technical proofs of some main results in previous sections.

\section{Problem Formulation}\label{sec:pro:formulation}

 \subsection{Notations and preliminaries}\label{sec:notations}
\paragraph{Sets and Functions}		
		  We use $A\subseteq\R^m$ to denote the control space. 
    For any matrix or vector $M$, we denote its transpose by $M^{\top}$ and its trace by $\mathrm{tr}(M)$.  $\S^{d}$ denotes the set of symmetric matrices in $\R^{d\times d}$,  $\S^{d}_{+}$ denotes the set of nonnegative define symmetric matrices in $\R^{d\times d }$ and $\S^{d}_{>+}$ the set of positive definite symmetric matrices in $\R^{d\times d }$. For $A,B\in \S^{d}_{+}$, $A\leq B$ means that $B-A\in\S^{d}_{+}$. We also use the notation $\Delta[a,b]:=\left\{(t,s)|a\leq t\leq s\leq b\right\}$. We denote $\mathscr C_{_2}(\R^d;\R)$ as a subset of  $C(\R^d;\R)$, representing the set of continuous  functions on $\R^d$ with quadratic growth. 
 For two Euclidian spaces $E$ and $F$, we denote by $L(E;F)$ the set of measurable functions $\phi:E\rightarrow F$, and denote by $Lip(E;F) \subseteq L(E;F)$ the set of Lipschitz functions $\phi:E\rightarrow F$.  

\paragraph{Integration and probability}		
Consider a complete probability space $(\Omega,\Fc,\P)=(\Omega^0\times\Omega^1,\Fc^0\otimes\Fc^1,\P^0\otimes\P^1)$, where $(\Omega^0,\Fc^0,\P^0)$ supports a $k$-dimensional Brownian motion $W^0$, and  $(\Omega^1,\Fc^1,\P^1)$ 
supports a $n$-dimensional Brownian motion $B$. An element $\omega$ $\in$ $\Omega$ is written as 
$\omega$ $=$ $(\omega^0,\omega^1)$ $\in$ $\Omega^0\times\Omega^1$, and we extend canonically $W^0$ and $W$ on $\Omega$ by setting 
$W^0(\omega^0,\omega^1)$ $:=$ $W^0(\omega^0)$, $W(\omega^0,\omega^1)$ $:=$ $W(\omega^1)$, and extend similarly on $\Omega$ 
any random variable on  $\Omega^0$ or $\Omega^1$.  We further assume that $(\Omega^1,\Fc^1,\P^1)$ satisfies $\Omega^1$ $=$ $\tilde\Omega^1\times\Omega^{'1}$, 
$\Fc^1$ $=$ $\Gc\otimes\Fc^{'1}$, $\P^1$ $=$ $\tilde\P^1\otimes\P^{'1}$, where $\tilde\Omega^1$ is a Polish space, $\Gc$ its Borel $\sigma$-algebra,  $\tilde\P^1$ an atomless probability  measure on 
$(\tilde\Omega^1,\Gc)$, while $(\Omega^{'1},\Fc^{'1},\P^{'1})$ supports $B$.    
We denote by  $\E^0$ (resp. $\E^1$ and $\tilde\E^1$)  the expectation under $\P^0$ (resp. $\P^1$ and $\tilde\P^1$),  by $\F^0$ $=$  
$(\Fc_t^0)_{t\geq 0}$ the $\P^0$-completion of the natural filtration generated by $W^0$ 
(and w.l.o.g. we assume that $\Fc^0$ $=$ $\Fc^0_\infty$). 
Let 
$\F$ $=$ $(\Fc_t)_{t\geq 0}$ denote the natural filtration generated by $W^0,B$, augmented with the independent $\sigma$-algebra $\Gc$.  Given a normed space $(E, \Vert\cdot\Vert_{E})$, we denote by $\Pc_{2}(E)$ the set of square integrable
probability measures $\mu$ on $E$ satisfying
$M^{2}_{2}(\mu):=\int_{E}\Vert x\Vert^{2}_{E}\mu(dx)<\infty$. For notational simplicity, we sometimes use $|\cdot|$ in place of $\Vert\cdot\Vert_{E}$. 
 For any $\mu\in\Pc_{2}(E)$, $F$ Euclidian space, we denote by $L^{2}_{\mu}(F)$ the set of measurable functions $\phi: E\rightarrow F$ which are square integrable with respect to $\mu$, by the set
 $L_{\mu\otimes\mu}^2(F)$ the set of measurable functions $\psi$ $:$ 
 $E\times E$ $\rightarrow$ $F$,  which are square integrable with respect to the product measure  $\mu\otimes\mu$, and  set
 \begin{equation*}
 	\langle \phi,\mu\rangle:=\int_{E}\phi(x)\mu(dx),\quad 	\langle \psi,\mu\otimes\mu\rangle:=\int_{E\times E} \psi(x,x') \mu(dx)\mu(dx').
 \end{equation*}
We also define $L_\mu^\infty(F)$ (resp.  $L_{\mu\otimes\mu}^\infty(F)$) as the subset of elements $\varphi$ $\in$ $L_\mu^2(F)$ (resp.  $L_{\mu\otimes\mu}^2(F)$) that are bounded 
$\mu$ (resp. $\mu\otimes\mu$) a.e., and $\|\varphi\|_\infty$ is their essential supremum. 
We denote  by $L^2(\Gc;E)$ (resp. $L^2(\Fc_t;E)$)  the set of $E$-valued  square integrable random variables on $(\tilde\Omega^1,\Gc,\tilde\P^1)$ (resp. on $(\Omega,\Fc_t,\P)$). 
For any random variable $X$ on $(\Omega,\Fc,\P)$, we denote by $\P_X$  its distribution under  $\P$, and by $\delta_{X}$ the Dirac measure on $X$.  
  
Note that  $\Pc_{_2}(\R^d)$ is a metric space equipped with the $2$-Wasserstein distance
\begin{equation*}
	\begin{split}
			W_{2}(\mu,\mu'):&=\inf\left\{\left(\int_{E\times E} |x-y|^{2}\pi(dx,dy)\right)^{\frac{1}{2}}: \pi\in\Pc_{2}(E\times E) \text{with marginals}\, \mu \,\text{and}\, \mu' \right\}\\
			&=\inf\left\{(\E|\xi-\xi'|^{2})^{\frac{1}{2}}:\, \xi,\xi'\in L^{2}(\Gc;E) \, \text{with} \, \P_{\xi}=\mu,\, \P_{\xi'}=\mu' \right\}
	\end{split}
\end{equation*}
and endowed with the Borel $\sigma$-field $\Bc(\Pc_{_2}(\R^d))$. 
It is easy to see that 
	\begin{equation}\label{winequality}
		W^{2}_{2}(\P_{\xi},\P_{\xi'})\leq \E[|\xi-\xi'|^{2}].
	\end{equation}
	Let $L^{2}_{\F}(\Omega;C([t_{0},T]);\R^{d}))$ denote the set of $\F$-progressively measurable and continuous processes with $$\E[\sup_{t_{0}\leq t\leq T}|X_{t}|^{2}]<\infty.$$ 
	Finally, we recall the next measurability result given in Remark 2.1 of \cite{Pham2016}.
\begin{proposition}\label{pro:measurability}
	 Given a measurable space $(E,\Ec)$ and a map $\rho$ $:$ $E$ $\rightarrow$ $\Pc_{_2}(\R^d)$, $\rho$ is measurable if and only if  the map  $\langle\varphi,\rho\rangle$ $:$ $E$ $\rightarrow$ $\R$  is measurable, for any $\varphi\in\mathscr C_{_2}(\R^d;\R)$. 
\end{proposition}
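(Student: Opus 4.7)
The plan is to prove both directions by combining continuity of the evaluation maps $\mu\mapsto\langle\varphi,\mu\rangle$ with a countable generator argument for the Borel $\sigma$-algebra $\Bc(\Pc_{_2}(\R^d))$.

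For the easy direction, I would first verify that for each $\varphi\in\mathscr C_{_2}(\R^d;\R)$, the evaluation map $\Phi_\varphi:(\Pc_{_2}(\R^d),W_2)\to\R$ given by $\Phi_\varphi(\mu):=\langle\varphi,\mu\rangle$ is continuous. This rests on the classical characterization that $W_2(\mu_n,\mu)\to 0$ is equivalent to $\mu_n\to\mu$ weakly together with $\int|x|^2\mu_n(dx)\to\int|x|^2\mu(dx)$. The second-moment convergence yields uniform integrability of $\{1+|x|^2\}$ under $\{\mu_n\}$, and combined with the quadratic-growth bound $|\varphi(x)|\leq C(1+|x|^2)$ and a cutoff/truncation argument, this delivers $\langle\varphi,\mu_n\rangle\to\langle\varphi,\mu\rangle$. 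Given this continuity, measurability of $\rho$ immediately implies measurability of $\langle\varphi,\rho\rangle=\Phi_\varphi\circ\rho$ as the composition of a measurable map with a Borel map.

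For the converse direction, the strategy is to exhibit a countable subfamily $\{\varphi_n\}_{n\in\N}\subset\mathscr C_{_2}(\R^d;\R)$ whose evaluation maps already generate $\Bc(\Pc_{_2}(\R^d))$. Concretely, I would take (i) a countable uniformly dense subset of $C_0(\R^d)\subset\mathscr C_{_2}(\R^d;\R)$, which separates probability measures on $\R^d$, together with (ii) the single function $x\mapsto|x|^2$, which lies in $\mathscr C_{_2}(\R^d;\R)$ and records second-moment information. The resulting product map $\Phi:\Pc_{_2}(\R^d)\to\R^\N$, $\Phi(\mu):=(\Phi_{\varphi_n}(\mu))_{n\in\N}$, is continuous by the easy direction and injective by point separation. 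Invoking the Polish structure of $(\Pc_{_2}(\R^d),W_2)$ together with Kuratowski's theorem -- a continuous injection between Polish spaces is a Borel isomorphism onto its (Borel) image -- yields $\sigma(\Phi_{\varphi_n}:n\in\N)=\Bc(\Pc_{_2}(\R^d))$. Since each $\langle\varphi_n,\rho\rangle$ is $\Ec$-measurable by assumption, the composition $\Phi\circ\rho$ is $(\Ec,\Bc(\R^\N))$-measurable, whence $\rho$ is $(\Ec,\Bc(\Pc_{_2}(\R^d)))$-measurable.

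The main obstacle I anticipate lies in the converse direction, namely the reduction to a countable generator for the $W_2$-Borel $\sigma$-algebra. This relies on three standard but non-trivial ingredients: the Polish nature of $(\Pc_{_2}(\R^d),W_2)$, the point-separating property of the chosen test functions capturing both weak convergence and second moments, and a clean application of Kuratowski's (equivalently, the Lusin--Suslin) theorem from descriptive set theory. The continuity statement in the easy direction, while routine, similarly hinges on the precise characterization of $W_2$-convergence in terms of weak convergence plus second-moment convergence, rather than mere weak convergence alone.
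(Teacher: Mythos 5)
The paper does not actually prove this proposition; it is simply \emph{recalled} from Remark 2.1 of the cited reference \cite{Pham2016}, so there is no in-paper argument to compare against. Your proof is correct and is the standard argument for this fact. The forward direction correctly reduces to the continuity of $\mu\mapsto\langle\varphi,\mu\rangle$ on $(\Pc_{_2}(\R^d),W_2)$, justified by the characterization of $W_2$-convergence as weak convergence together with convergence of second moments, which supplies the uniform integrability needed to pass from bounded test functions to quadratic-growth ones via truncation. The converse is also sound: a countable, point-separating subfamily of $\mathscr C_{_2}(\R^d;\R)$ yields a continuous injection $\Phi:\Pc_{_2}(\R^d)\to\R^{\N}$ between Polish spaces, and the Lusin--Suslin (Kuratowski) theorem then gives $\sigma(\Phi)=\Bc(\Pc_{_2}(\R^d))$, so that $\rho=\Phi^{-1}\circ(\Phi\circ\rho)$ is measurable whenever each $\langle\varphi_n,\rho\rangle$ is. One minor remark: the function $x\mapsto|x|^2$ is not actually needed for injectivity, since a countable dense subset of $C_0(\R^d)$ already separates probability measures on $\R^d$; its inclusion is harmless but not load-bearing.
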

\paragraph{Differentiability on Wassersetin space}
Let us recall the notion of derivative with respect to a probability measure, as introduced by P.L. Lions in the lecture notes \cite{Pierre2013}. Lion's notion is based on the lifting of functions $u:\Pc_{2}(\R^{d})\rightarrow\R$ into functions $U$ defined on $L^2(\Gc;\R^d)$ ($=$ $L^2(\tilde\Omega^1,\Gc,\tilde\P^1;\R^d)$) by $U(X)=u(\P_{X})$. We say that $u$ is differentiable (resp. $C^{1}$ ) on $\Pc_{2}(\R^{d})$ if the lift $U$ is Fréchet differentiable (resp. Fréchet differentiable with continuous derivatives) on $L^2(\Gc;\R^d)$, and the Fréchet derivative is defined by 
$\partial_{\mu}u(\P_{X})(X)$. Moreover, $\partial_{\mu} u(\mu)\in L^{2}_{\mu}(\R^{d})$ for $\mu$ $\in$ $\Pc_{_2}(\R^d)$ $=$ $\{\P_{\xi}:  \xi  \in L^2(\Gc;\R^d)\}$. 
 
  We say that $u$ is fully  $\Cc^2$ if it is $\Cc^1$, 
 and one can find, for any $\mu$ $\in$ $\Pc_{_2}(\R^d)$, a continuous version of the mapping $x\in\R^d$ $\mapsto$ $\partial_\mu u(\mu)(x)$ such that the mapping
 $(\mu,x)$ $\in$ $\Pc_{_2}(\R^d)\times\R^d$ $\mapsto$ $\partial_\mu u(\mu)(x)$  is continuous at any $(\mu,x)$ with $x$ $\in$ Supp$(\mu)$, and 
 \begin{itemize}
 	\item[(i)] for fixed $\mu$ $\in$ $\Pc_{_2}(\R^d)$, the mapping $x$ $\in$ $\R^d$ $\mapsto$  $\partial_\mu u(\mu)(x)$ is differentiable with a gradient denoted by  
 	$\partial_x  \partial_\mu u(\mu)(x)$  $\in$ $\R^{d\times d}$, such that the mapping  $(\mu,x)$ $\in$ $\Pc_{_2}(\R^d)\times\R^d$ 
 	$\mapsto$  $\partial_x  \partial_\mu u(\mu)(x)$ is continuous;
 	\item[(ii)] for fixed  $x$ $\in$ $\R^d$, the mapping $\mu$ $\in$ $\Pc_{_2}(\R^d)$ $\mapsto$  $\partial_\mu u(\mu)(x)$ is differentiable in the lifted sense.  Its derivative, interpreted as a mapping $x'$ $\in$ $\R^d$ $\mapsto$ $\partial_\mu \big[ \partial_\mu u(\mu)(x)\big](x')$ $\in$ $\R^{d\times d}$ in 
 	$L^2_\mu(\R^{d\times d})$ and denoted by $x'$ $\in$ $\R^d$ $\mapsto$ $\partial_\mu^2 u(\mu)(x,x')$, such that the mapping $(\mu,x,x')$ $\in$ 
 	$\Pc_{_2}(\R^d)\times\R^d\times\R^d$ $\mapsto$ $\partial_\mu^2 u(\mu)(x,x')$ is continuous. 
 \end{itemize}
 We say that $u$ $\in$ $\Cc^2_b(\Pc_{_2}(\R^d))$ if it is fully $\Cc^2$,  $\partial_x  \partial_\mu u(\mu)$ $\in$ $L_\mu^\infty(\R^{d\times d})$, $\partial_\mu^2 u(\mu)$ $\in$ $L_{\mu\otimes\mu}^\infty(\R^{d\times d})$ for any $\mu$ 
 $\in$ $\Pc_{_2}(\R^d)$,  and for any compact set $\Kc$ of $\Pc_{_2}(\R^d)$, we have
 \beq \label{Kbor}
 \sup_{ \mu \in \Kc } \Big[ \int_{\R^d} \big| \partial_\mu u(\mu)(x) |^2\mu(dx)  +
 \big \| \partial_x \partial_\mu u(\mu)\|_{_\infty} +  \big \| \partial_\mu^2 u(\mu)\|_{_\infty}
 \Big]  & < & \infty.
 \enq

\subsection{Time inconsistent extended MFC problems}

 We consider the controlled conditional McKean-Vlasov SDE of
 $X^{\alpha}:=(X^{\alpha}_{t})_{t_{0}\leq t\leq T}$ valued in $\R^{d}$ that
\begin{equation}\label{dynamics}
	\begin{split}
	&dX^{\alpha}_{t}=b(t,X^{\alpha}_{t},\alpha_{t},\P^{W^0}_{(X^{\alpha}_{t},\alpha_{t})})dt+\sigma(t,X^{\alpha}_{t},\alpha_{t},\P^{W^0}_{(X^{\alpha}_{t},\alpha_{t})})dB_{t}+\sigma_0(t,X^{\alpha}_t,\alpha_t,\P^{W^0}_{(X^{\alpha}_{t},\alpha_{t})}) dW_t^0,\\ &X^{\alpha}_{t_{0}}=\xi\in L^{2}(\Fc_{t_{0}};\R^{d}).
	\end{split}
\end{equation}
Here, $b:[0,T]\times\R^{d}\times A\times\Pc_{2}(\R^{d}\times A)\rightarrow\R^{d}$,  $\sigma:[0,T]\times\R^{d}\times A\times\Pc_{2}(\R^{d}\times A)\rightarrow\R^{d\times n}$ and $\sigma_{0}:[0,T]\times\R^{d}\times A\times\Pc_{2}(\R^{d}\times A)\rightarrow\R^{d\times k}$ are deterministic measurable functions and  the control process $\alpha=(\alpha_{t})_{t_{0}\leq t\leq T}$ is $\F$--progressive taking values in a subset $A$ of $\R^{m}$. Let $\P_{(X^{\alpha}_{t},\alpha_{t})}^{W^0}$ denote the regular conditional joint distribution of $(X^{\alpha}_{t},\alpha_{t})$ given $\Fc^0$, and its realization at some $\omega^0$ $\in$ $\Omega^0$ also reads as the law under $\P^1$ of the random vector $(X_t(\omega^0,\cdot),\alpha_t(\omega^0,\cdot))$ on 
$(\Omega^1,\Fc^1,\P^1)$, i.e., $\P^{W^0}_{(X^{\alpha}_{t},\alpha_{t})}(\omega^0)$ $=$ $\P^1_{(X_t(\omega^0,\cdot),\alpha_t(\omega^0,\cdot))}$
. Note that the process $X^{\alpha}$ depends on the initial pair $(t_{0},\xi)$, and if needed, we shall stress the dependence on $(t_{0},\xi)$ by writing $X^{t_{0},\xi,\alpha}$. We make the following assumption on coefficients $b$, $\sigma$ and $\sigma_0$:
\begin{assumption}\label{assumption:bsigma}
	\begin{itemize}
		
		\item[(i)] There exists a constant $C>0$ s.t. for all $t\in[0,T]$, $x,x'\in\R^{d}$, $a,a'\in A$, $\lambda,\lambda'\in\Pc_{2}(\R^{d}\times A)$,
		\begin{equation*}
			\begin{split}
				|b(t,x,a,\lambda)-b(t,x',a',\lambda')|&+|\sigma(t,x,a,\lambda)-\sigma(t,x',a',\lambda')|+|\sigma_0(t,x,a,\lambda)-\sigma_0(t,x',a',\lambda')|\\&\leq C\left[|x-x'|+|a-a'|+W_{2}(\lambda,\lambda')\right],
			\end{split}
		\end{equation*}
		and 
	{\begin{equation*}
		|b(t,0,a_{0},\delta(0,a_{0}))|+|\sigma(t,0,a_0,\delta(0,a_0))|+|\sigma_0(t,0,a_0,\delta(0,a_0))|<C
		\end{equation*}
	for some $C\geq0$, $a_{0}\in A$.}

		\item[(ii)] For each fixed $(x,a,\lambda)\in\R^{d}\times A\times\Pc_{2}(\R^{d}\times A)$, $b(t,x,a,\lambda)$, $\sigma(t,x,a,\lambda)$ and $\sigma_0(t,x,a,\lambda)$ are right continuous in $t\in[0,T)$.
	\end{itemize}
\end{assumption}
Condition (i) of Assumption \ref{assumption:bsigma} ensures that for any square integrable control process $\alpha$, i.e., $\E[\int_{t_{0}}^{T}|\alpha_{t}|^{2}dt]<\infty,$ there exists a unique solution $X^{\alpha}\in L^{2}_{\F}(\Omega;C([t_{0},T];\R^{d}))$ to \eqref{dynamics}, and it satisfies 
\begin{equation}\label{estimate:sde}
	\E\left[\sup_{t_{0}\leq t\leq T}|X^{\alpha}_{t}|^{2}\right]\leq C\left(1+\E[\xi^{2}]+\E\left[\int_{t_{0}}^{T}|\alpha_{t}|^{2}dt\right]\right)<\infty.
\end{equation}
Condition (ii) of Assumption \ref{assumption:bsigma} imposes a mild continuity requirement and will be used later.

Given the running cost $f:[0,T]\times[0,T]\times\R^{d}\times A\times\Pc_{2}(\R^{d}\times A)$ and the terminal cost $g:[0,T]\times\R^{d}\times\Pc_{2}(\R^{d})$, the cost functional is defined by 
\begin{equation}\label{objective}
	J(\tau;t_{0},\xi;\alpha):=\E\left[\int_{t_{0}}^{T}f(\tau;t,X^{t_{0},\xi,\alpha}_{t},\alpha_{t},\P^{W^0}_{(X^{t_{0},\xi,\alpha}_{t},\alpha_{t})})dt+g(\tau;X^{t_{0},\xi,\alpha}_{T},\P^{W^0}_{X^{t_{0},\xi,\alpha}_{T}})\right].
\end{equation}
We shall make the following assumptions on $f$ and $g$:
\begin{assumption}\label{assumption:fg}
	\begin{itemize}
		\item[(i)] There exists a constant $C(\tau)>0$ which may depend on $\tau$ s.t. for all $t\in[0,T]$, $x\in\R^{d}$, $a\in A$, $\mu\in\Pc_{2}(\R^{d})$, $\lambda\in\Pc_{2}(\R^{d}\times A )$,
		\begin{equation*}
			|f(\tau;t,x,a,\lambda)|+|g(\tau;x,\mu)|\leq C(\tau)\left[1+|x|^{2}+|a|^{2}+M^{2}_{2}(\mu)+M^{2}_{2}(\lambda)\right].
		\end{equation*}

		\item[(ii)] For each fixed $\tau\in[0,T]$, $f(\tau;t,x,a,\lambda)$ is  continuous with respect to $t$, $x$, $a$ and $\lambda$, where the continuity with respect to the probability measures is understood in the sense of the $W_{2}$- metric.
	\end{itemize}
\end{assumption}
Under Condition (i) of Assumption \ref{assumption:fg}, we obtain that $J(\tau;t_{0},\xi;\alpha)$ is finite for any square integrable control process $\alpha$. The continuity condition(ii) in Assumption \ref{assumption:fg} is technical and will be used in what follows.

In the present paper, we mainly focus on closed-loop strategies\footnote{For simplicity of classification, in the present paper, by a closed-loop strategy we mean a deterministic feedback function.}, thus we restrict ourselves to controls $\alpha$ in the closed loop form that
\begin{equation}
	\alpha_{t}=\tilde{\alpha}(t,X_{t},\P^{W^0}_{X_{t}}),\quad t_0\leq t\leq T,
\end{equation}
for some deterministic measurable function $\talpha(t,x,\mu)$ defined on $[0,T]\times\R^{d}\times\Pc_{2}(\R^{d})$.  Under this setup,  SDE \eqref{dynamics} becomes 
\begin{align}\label{closeddynamics}
dX^{\alpha}_{t}&=b(t,X^{\alpha}_{t},\tilde{\alpha}(t,X^{\alpha}_{t},\P^{W^0}_{X^{\alpha}_{t}}),\P^{W^0}_{(X^{\alpha}_{t},\tilde{\alpha}(t,X^{\alpha}_{t},\P^{W^0}_{X^{\alpha}_{t}}))})dt+\sigma(t,X^{\alpha}_{t},\tilde{\alpha}(t,X^{\alpha}_{t},\P^{W^0}_{X^{\alpha}_{t}}),\P^{W^0}_{(X^{\alpha}_{t},\tilde{\alpha}(t,X^{\alpha}_{t},\P^{W^0}_{X^{\alpha}_{t}}))})dB_{t}\nonumber\\&+ \sigma_0(t,X^{\alpha}_{t},\tilde{\alpha}(t,X^{\alpha}_{t},\P^{W^0}_{X^{\alpha}_{t}}),\P^{W^0}_{(X^{\alpha}_{t},\tilde{\alpha}(t,X^{\alpha}_{t},\P^{W^0}_{X^{\alpha}_{t}}))})dW^0_{t},
\end{align} 
with $X^{\alpha}_{t_{0}}=\xi$. 

Let $LIP([0,T]\times\R^d\times\Pc_{2}(\R^d);A)$ denote the set of deterministic measurable functions $\talpha$ defined on $[0,T]\times\R^d\times\Pc_{2}(\R^d)$, valued in $A$, which are Lipschitz in $(x,\mu)$ and satisfy a linear growth condition on  $(x,\mu)$, uniformly on $t\in[0,T]$,  i.e., there exists a positive constant $C>0$ s.t. for all $t\in[0,T]$, $x,x'\in\R^{d}$, $\mu,\mu'\in\Pc_{2}(\R^{d})$,
\begin{equation*}
	|\talpha(t,x,\mu)-\talpha(t,x',\mu')|\leq C(|x-x'|+W_{2}(\mu,\mu')),\, |\talpha(t,0,\delta_{0})|<C.
\end{equation*}
\begin{definition}\label{admissiblestrategy}
	A deterministic measurable function $\talpha:[0,T]\times\R^d\times\Pc_{2}(\R^d)\rightarrow A$ is called an admissible strategy if 
	\begin{itemize}
		\item[(i)] $\talpha\in LIP([0,T]\times\R^d\times\Pc_{2}(\R^d);A)$.
		
		\item[(ii)] For each fixed $(x,\mu)\in\R^{d}\times\Pc_{2}(\R^{d})$, $\talpha(t,x,\mu)$ is right continuous in $t\in[0,T)$.
	\end{itemize}
	We use $\Ac$ to denote the set of admissible strategies.
\end{definition}
 It is straightforward to see that for any $\talpha\in LIP([0,T]\times\R^d\times\Pc_{2}(\R^d);A)$,  under Condition (i) of Assumption \ref{assumption:bsigma}, the closed-loop system \eqref{closeddynamics} admits a unique solution $X^{\alpha}\in L^{2}_{\F}(\Omega;C([t_{0},T];\R^{d}))$  satisfying 
	\begin{equation*}
			\E\left[\sup_{t_{0}\leq t\leq T}|X^{\alpha}_{t}|^{2}\right]\leq C\left(1+\E[\xi^{2}]\right)<\infty.
	\end{equation*}
Similar to the continuity conditions in Assumptions \ref{assumption:bsigma} and \ref{assumption:fg}, Condition (ii) of Definition \ref{admissiblestrategy} will be used later to facilitate some technical proofs. Moreover, the outcome $\alpha$ \footnote{For any admissible strategy $\talpha\in\Ac$, we call the control process $\alpha$ as its outcome if, for $t\in[t_{0},T]$, $\alpha_{t}=\talpha(t,X_{t},\P^{W^0}_{X_{t}})$, where $X_{t}$ solves the closed-loop system \eqref{closeddynamics}. } corresponding to an admissible strategy $\talpha\in\Ac$ is square integrable, thus the cost functional $J(t_{0};t_{0},\xi;\alpha)$ is well defined under the admissible strategy $\talpha$.  In addition, it is easy to see that $Lip(\R^{d};A)\subset\Ac$, where  $Lip(\R^{d};A)$ is the set of Lipschitz functions.

 For any measurable function $\talpha:\R^{d}\rightarrow A$, we denote by $Id\talpha$ the function
\begin{equation*}
	\begin{split}
			Id\talpha:\R^{d}&\rightarrow\R^{d}\times A \\
			x&\rightarrow(x,\talpha(x)).
	\end{split}
\end{equation*}
We observe that the realization of the conditional  distribution $\P^{W^0}_{(X_{t},\alpha_{t})}$ at some $\omega^0\in\Omega^0$ associated to a feedback strategy $\talpha$ is equal to the image by $Id\talpha(t,\cdot,\P^{W^{0}}_{X_{t}}(\omega^0))$ of the  distribution $\P^{W^0}_{X_{t}}(\omega^0)$, i.e.
\begin{equation*}
	\P^{W^0}_{(X_{t},\alpha_{t})}(\omega^0)=Id\talpha(t,\cdot,\P^{W^{0}}_{X_{t}}(\omega^0))\star\\P^{W^{0}}_{X_{t}}(\omega^0),
\end{equation*}
where $\star$ denotes the standard pushforward of measures: for any $\talpha\in L(\R^{d};A)$ and $\mu\in\Pc_{2}(\R^{d})$:
\[(Id\talpha\star\mu)(B)=\mu\left(Id\talpha^{-1}(B)\right),\quad \forall B\in\Bc(\R^{d}\times A).\]
Hence, the dynamics of $X^{t,\xi,\talpha}$ starting with $t\in[0,T]$ and $\xi\in L^{2}(\Fc_{t};\R^{d})$ under the strategy $\talpha$ can be written as
\begin{equation}\label{closedynamics2}
	\begin{split}
		X^{t,\xi,\talpha}_{s}=\xi&+\int_{t}^{s}b(r,X^{t,\xi,\talpha}_{r},\tilde{\alpha}(r,X^{t,\xi,\talpha}_{r},\P^{W^{0}}_{X^{t,\xi,\talpha}_{r}}),Id\talpha(r,\cdot,\P^{W^{0}}_{X^{t,\xi,\talpha}_{r}})\star\P^{W^{0}}_{X^{t,\xi,\talpha}_{r}})dr\\&+\int_{t}^{s}\sigma(r,X^{t,\xi,\talpha}_{r},\tilde{\alpha}(r,X^{t,\xi,\talpha}_{r},\P^{W^{0}}_{X^{t,\xi,\talpha}_{r}}),Id\talpha(r,\cdot,\P^{W^{0}}_{X^{t,\xi,\talpha}_{r}})\star\P^{W^{0}}_{X^{t,\xi,\talpha}_{r}})dB_{r}\\
		&+\int_{t}^{s}\sigma_0(r,X^{t,\xi,\talpha}_{r},\tilde{\alpha}(r,X^{t,\xi,\talpha}_{r},\P^{W^{0}}_{X^{t,\xi,\talpha}_{r}}),Id\talpha(r,\cdot,\P^{W^{0}}_{X^{t,\xi,\talpha}_{r}})\star\P^{W^{0}}_{X^{t,\xi,\talpha}_{r}})dW^0_{r}
		,\quad t\leq s\leq T.
	\end{split}
\end{equation}
As $\{X_s^{t,\xi,\talpha},t\leq s \leq T\}$ is $\F$-adapted, and
$\P_{X_s^{t,\xi,\talpha}}^{W^0}(dx)$ $=$ $\P[X_s^{t,\xi,\talpha} \in dx | \Fc^0]$ $=$ $\P[X_s^{t,\xi,\talpha} \in dx | \Fc_s^0]$, we have for any $\varphi$ $\in$ $\mathscr C_{_2}(\R^d;\R)$:  \begin{equation*}
	 \E \Big[ \varphi(X_s^{t,\xi,\talpha}) \big| \Fc^0 \Big] \; = \; \E \Big[ \varphi(X_s^{t,\xi,\talpha}) \big| \Fc_s^0 \Big], \;\;\; t \leq s \leq T,
\end{equation*}
which shows that $\langle\varphi,\P_{X_s^{t,\xi,\talpha}}^{W^0}\rangle$ is $\Fc_s^0$-measurable. Therefore, in view of the measurability property in Proposition \ref{pro:measurability},   
$\{\P_{X_s^{t,\xi,\talpha}}^{W^0}, t\leq s \leq T\}$ is $(\Fc_s^0)_{t\leq s\leq T}$-adapted.  Moreover, as $X^{t,\xi,\talpha}$ has continuous paths and satisfies $	\E\left[\sup_{t\leq s\leq T}|X^{t,\xi,\talpha}_{s}|^{2}\right]<\infty$, we obtain $\E^{1}\left[\sup_{t\leq s\leq T}|X^{t,\xi,\talpha}_{s}(\omega^0,\cdot)|^{2}\right]<\infty$ for $\P^0$-a.s $\omega^0$ $\in$ $\Omega^0$. Then, from \eqref{winequality}, we easily get that, $\P^0$-almost surely, $\{\P_{X_s^{t,\xi,\talpha}}^{W^0}, t\leq s \leq T\}$ has continuous trajectories. 

For $\P^0$-a.s $\omega^0$ $\in$ $\Omega^0$, the law of the solution $\{X_s^{t,\xi,\talpha}(\omega^0,\cdot),t\leq s\leq T\}$ to \eqref{closedynamics2} on 
$(\Omega^1,\Fc^1,\P^1)$ is unique, which implies that, for $t\leq s \leq T$,
$ \P_{X_s^{t,\xi,\talpha}}^{W^0}(\omega^0)=\P^1_{X_s^{t,\xi,\talpha}(\omega^0,\cdot)}$ depends on $\xi$ only through $\P_\xi^{W^0}(\omega^0)$ $=$ $\P^1_{\xi(\omega^0,\cdot)}$, i.e., the law invariance property holds. Let us now check that for any $\mu$ $\in$ $\Pc_{_2}(\R^d)$,  one can find $\xi$ $\in$ $L^2(\Fc_t;\R^d)$ s.t. $\P_{\xi}^{W^0}$ $=$ $\mu$.  
Indeed, recalling that  $\Gc$ is rich enough, one can find $\xi$ $\in$ $L^2(\Gc;\R^d)$ $\subset$ $L^2(\Fc_t;\R^d)$  s.t. $\P_\xi$ $=$ $\mu$. Because $\Gc$ is independent of $W^0$, it implies that 
$\P_\xi^{W^0}$ $=$ $\mu$. Therefore, one can define the process \[\rho_s^{t,\mu,\talpha}:=\P^{W^0}_{X^{t,\xi,\talpha}_{s}}, \quad\text{for}\quad\forall 0\leq t\leq s\leq T, \,\mu=\P_{\xi}\in\Pc_{2}(\R^{d}),\] and it is a square integrable  $\F^0$-progressively measurable process in 
$\Pc_{_2}(\R^d)$.  
{ Following the arguments in \cite{Pham2016}, we have the flow property that
\begin{equation*}
\rho_s^{t,\mu,\talpha}(\omega^0) =\rho_s^{\theta(\omega^0),\rho_{\theta(\omega^0)}^{t,\mu,\talpha}(\omega^0),\talpha}(\omega^0), \;\;\;  s \in [\theta,T], \; \P^0(d\omega^0)-a.s
\end{equation*}
for all $\theta$ $\in$ $\Tc^0_{t,T}$, the set of  $\F^0$-stopping times valued in $[t,T]$. }

We then can rewrite the cost functional by
\beq 
J(\tau;t,\xi;\talpha) 
= \E \Big[ \int_t^T \hat f(\tau;s,\rho_s^{t,\mu,\talpha},\talpha(s,\cdot,\rho_s^{t,\mu,\talpha})) ds + \hat g(\tau;\rho_T^{t,\mu,\talpha}) \Big] \label{Jmu}
\enq
for $(\tau,t)$ $\in$ $[0,T]\times[0,T]$, $\xi$ $\in$ $L^2(\Gc;\R^d)$ with law $\mu$ $=$ $\P_\xi^{W^0}$  $\in$ $\Pc_{_2}(\R^d)$, $\talpha$ $\in$ $\Ac$, and with the function $\hat{f}$ defined on $[0,T]\times[0,T]\times\Pc_{2}(\R^{d})\times L(\R^{d};A)$ and $\hat{g}$ defined on $[0,T]\times\Pc_{2}(\R^{d})$ by 
 \begin{equation}\label{hat_fg}
	\hat{f}(\tau;t,\mu,\talpha):=\left\langle f(\tau;t,\cdot,\talpha(\cdot),Id\talpha\star\mu),\mu\right\rangle,\quad \hat{g}(\tau;\mu):=\left\langle g(\tau;\cdot,\mu),\mu\right\rangle.
\end{equation}
We shall denote
\begin{align}\label{formulationcon}
J(\tau;t,\mu;\talpha)  \; := \;  J(\tau;t,\xi;\talpha) 
= \E^0 \Big[ \int_t^T \hat f(\tau;s,\rho_s^{t,\mu,\talpha},\talpha(s,\cdot,\rho_s^{t,\mu,\talpha})) ds + \hat g(\tau;\rho_T^{t,\mu,\talpha}) \Big]  
\end{align}
for $(\tau,t)$ $\in$ $[0,T]\times[0,T]$, $\mu\in\Pc_{_2}(\R^d)$, $\xi \in L^2(\Gc;\R^d)$ with $\P_\xi$ $=$ $\mu$, $\talpha\in\Ac$ and the expectation is under $\P^0$. 

{Due to the dependence of the initial time $t_0$ in
the running cost and terminal cost, the problem is inherently time-inconsistent and a global optimal control is not meaningful. A rational agent, being aware of the time inconsistency, reacts in a game theoretic thinking by taking the future selves’ behavior as a constraint and determining the best current action in response. This amounts to find a Nash subgame perfect equilibrium, where the players represent different incarnations of the agent, parametrized by \( s \in [0,T] \). Using the game theoretic thinking, we aim to find the closed-loop equilibrium strategy satisfying the time-consistent planning.} Let $\Dc\subseteq\Ac$ denote the set of alternative strategies that at each time $t$ the agent can choose to implement in an infinitesimally small time period. For given $t\in[0,T)$, $\epsilon\in(0,T-t)$, $\hat{\alpha}\in\Ac$, and $v\in\Dc$, define 
 \begin{equation}
 		\alpha_{t,\epsilon,v}(s,y,\mu):=\begin{cases}
 			\hat{\alpha}(s,y,\mu),\quad s\notin[t,t+\epsilon),y\in\R^{d},\mu\in\Pc_{2}(\R^{d}),\\
 			v(s,y,\mu),\quad s\in[t,t+\epsilon),y\in\R^{d},\mu\in\Pc_{2}(\R^{d}).\\
 		\end{cases}
 \end{equation}
It is easy to see that $\alpha_{t,\epsilon,v}\in\Ac$.
\begin{definition}[Closed-loop equilibrium strategy]
	$\hat{\alpha}\in\Ac$  is a closed-loop equilibrium strategy under time-consistent planning if for any $\mu\in\Pc_{2}(\R^{d})$, $t\in[0,T)$, and $v\in\Dc$, we have 
	\begin{equation}\label{def:equilibrium}
		\Delta^{\hat{\alpha}}(t,\mu;v):=\liminf\limits_{\epsilon\downarrow0}\frac{J(t;t,\mu;\alpha_{t,\epsilon,v})-J(t;t,\mu;\hat{\alpha})}{\epsilon}\geq0.
	\end{equation}
\end{definition}
{\begin{remark}In the present paper, we are interested in characterizing the above closed-loop equilibrium strategy, as introduced in \cite{Bjork2017}, in a mean-field model by developing some PDE and verification arguments. In the existing literature, another notable concept of a time-consistent solution, known as the open-loop equilibrium, has also been studied (e.g., \cite{Hu2012,Hu2017}). Here, we can also give its definition within our framework. 
\begin{definition}[Open-loop equilibrium strategy]
    For a given initial pair $(t_0,\xi)\in[0,T)\times L^{2}(\Fc_{t_{0}};\R^{d})$, a control process \( \hat{\alpha} \) is called an open-loop equilibrium if, for any \( t \in [t_0,T) \) and any bounded \( \Fc_t \)-measurable random variable \( \nu \), the inequality holds:
    \begin{equation*}
         \liminf\limits_{\epsilon\downarrow0}\frac{\tilde{J}(t,\hat{X}_t,\alpha^{t,\epsilon,\nu})-\tilde{J}(t,\hat{X}_t,\hat{\alpha})}{\epsilon}\geq 0,\, a.s.,
    \end{equation*}
   where
   $\hat{X}:=\left(X^{t_0,\xi,\hat{\alpha}}_t\right)_{t_0\leq t\leq T}$ is the solution of \eqref{dynamics} under the control process $\hat{\alpha}$ with initial condition $(t_0,\xi)$, $\alpha^{t,\epsilon,\nu}$ is defined by
   \begin{equation*}
       \alpha^{t,\epsilon,\nu}_{s}:=\hat{\alpha}_{s}+\nu 1_{[t,t+\epsilon)}(s),\quad s\in[t_{0},T]
   \end{equation*}
   and $\tilde{J}(t,\hat{X}_t,\hat{\alpha})$ is defined by 
   \begin{equation*}
     \tilde{J}(t,\hat{X}_t,\hat{\alpha}):= \E\left[\int_{t}^{T}f(t;s,\hat{X}_s,\hat{\alpha}_{s},\P^{W^0}_{(\hat{X}_s,\hat{\alpha}_{s})})ds+g(t;\hat{X}_{T},\P^{W^0}_{\hat{X}_{T}})\bigg|\Fc_{t}\right].
   \end{equation*}
\end{definition}
We note the fundamental distinctions between two definitions, as the open-loop equilibrium is a concept in which a control process is chosen based on the initial condition, while the closed-loop equilibrium is a concept for the feedback function, which is selected independently of the initial condition. For more discussion on these two notions, we refer to \cite{Yan2019}. The characterization of the open-loop equilibrium requires different methods such as the stochastic maximum principle (or FBSDE approach) and will be left for the future research.
\end{remark}}

\section{Characterization of Closed-loop Equilibrium Strategies}\label{sec:characterization}
 For an interval $[a,b]\subset[0,T]$, let $\phi$ be a mapping from $(t,\mu)\in[a,b]\times\Pc_{2}(\R^{d})$ to $\R$. Let us denote $\overline{C}_{b}^{1,2}([a,b]\times\Pc_{2}(\R^{d}))$ as the space of $\phi$ such that $\phi$ is continuous on $[a,b]\times\Pc_{2}(\R^{d})$, $\phi(t,\cdot)\in C^{2}_{b}(\Pc_{2}(\R^{d}))$ for all $t\in[a,b]$, $\phi(\cdot,\mu)\in C^{1}([a,b])$, $\partial_{t}\phi$ is continuous on $[a,b]\times\Pc_{2}(\R^{d})$, the mapping $(t,x,\mu)\in[a,b]\times\R^{d}\times\Pc_{2}(\R^{d})\rightarrow\partial_{\mu}\phi(t,\mu)(x)$ is continuous, the mapping $(t,x,\mu)\in[a,b]\times\R^{d}\times\Pc_{2}(\R^{d})\rightarrow\partial_{x}\partial_{\mu}\phi(t,\mu)(x)$ is continuous, the mapping $(t,x,x',\mu)\in[a,b]\times\R^{d}\times\R^{d}\times\Pc_{2}(\R^{d})\rightarrow\partial^2_{\mu}\phi(t,\mu)(x,x')$ is continuous. For a given strategy $\talpha\in\Ac$, define the  generator $\Lc^{\talpha}$ by 
\begin{equation}\label{generater}
	\begin{split}		\Lc^{\talpha}\phi(t,\mu)&:=\partial_{t}\phi(t,\mu)+\big\langle\partial_{\mu}\phi(t,\mu)(\cdot)\cdot b(t,\cdot,\talpha(t,\cdot,\mu),Id\talpha(t,\cdot,\mu)\star\mu)\\&+\frac{1}{2}\mathrm{tr}(\partial_{x}\partial_{\mu}\phi(t,\mu)(\cdot)(\sigma\sigma^{\top}+\sigma_0\sigma_0^{\top})(t,\cdot,\talpha(t,\cdot,\mu),Id\talpha(t,\cdot,\mu)\star\mu)),\mu\big\rangle
		+\Mc^{\talpha}\phi(t,\mu),
	\end{split}
\end{equation}
where
\begin{equation*}
	\begin{split}
		&\Mc^{\talpha}\phi(t,\mu)\\&:=\int_{\R^{2d}}\frac{1}{2} {\rm tr}\big(  \partial_\mu^2\phi(t,\mu)(x,x')\sigma_0(t,x,\talpha(t,x,\mu),Id\talpha(t,\cdot,\mu)\star\mu))\sigma_0^{\top}(t,x',\talpha(t,x',\mu),Id\talpha(t,\cdot,\mu)\star\mu) \big)\mu(dx)\mu(dx').
	\end{split}
\end{equation*}
For a given strategy $\hat{\alpha}\in\Ac$ and $(\tau,t,\mu)\in\Delta[0,T]\times\Pc_2(\R^d)$, denote 
\begin{equation}\label{tauvaluefunction}
	V(\tau;t,\mu):=J(\tau;t,\mu;\hat{\alpha})=\E^0 \Big[ \int_t^T \hat f(\tau;s,\rho_s^{t,\mu,\hat{\alpha}},\hat{\alpha}(s,\cdot,\rho_s^{t,\mu,\hat{\alpha}})) ds + \hat g(\tau;\rho_T^{t,\mu,\hat{\alpha}}) \Big].
\end{equation}
We impose the following assumption in order to study time-consistent equilibrium strategies.
\begin{assumption}\label{assumption:V}
	
	\begin{itemize}
		
		\item[(i)] For each fixed $\tau\in[0,T]$, $V(\tau;\cdot,\cdot)\in\overline{C}_{b}^{1,2}([\tau,T]\times\Pc_{2}(\R^{d}))$. 
		
		\item[(ii)] $\partial_{t}V(\tau;t,\mu)$ satisfies the quadratic growth condition:
		\begin{equation}\label{qgrowthcondition}
			|\partial_{t}V(\tau;t,\mu)|\leq K(\tau)(1+M^{2}_{2}(\mu)),\quad \forall (\tau,t,\mu)\in\Delta[0,T]\times\Pc_{2}(\R^d),
		\end{equation}
			where $K(\tau)$ is some constant which may depend on $\tau$.

		\item[(iii)] $\partial_{\mu}V(\tau;t,\mu)(x)$ satisfies the linear growth condition:
		\begin{equation}\label{growthcondition}
			|\partial_{\mu}V(\tau;t,\mu)(x)|\leq K(\tau)(1+|x|+M_{2}(\mu)),\quad \forall (\tau,t,x,\mu)\in\Delta[0,T]\times\R^d\times\Pc_{2}(\R^d),
		\end{equation}
		where $K(\tau)$ is some constant which may depend on $\tau$.
		
	\end{itemize}
\end{assumption}

{ The following proposition serves as a technical step towards the verification theorem of the closed-loop equilibrium strategies in the mean-field model.}

 \begin{proposition}
     \label{Theorem:charactization}
 	Let Assumptions \ref{assumption:bsigma}, \ref{assumption:fg} and \ref{assumption:V} hold. 
  For any $(t,\mu)\in[0,T)\times\Pc_{2}(\R^{d})$ and $v\in\Ac$, we have 
 	\begin{equation}\label{expansionthem}
 		J(t;t,\mu;\alpha_{t,\epsilon,v})-J(t;t,\mu;\hat{\alpha})=\epsilon\Delta^{\hat{\alpha}}(t,\mu;v)+o(\epsilon),
 	\end{equation}
 	with $\Delta^{\hat{\alpha}}(t,\mu;v)=\Gamma^{t,\hat{\alpha}}(t,\mu;v)$, where for any $\tau\in[0,t]$,
 	\begin{equation}\label{Gamma}
 		\Gamma^{\tau,\hat{\alpha}}(t,\mu;v):=\hat{f}(\tau;t,\mu,v(t,\cdot,\mu))+\Lc^{v}V(\tau;t,\mu).
 	\end{equation}
 	Moreover, $\Gamma^{\tau,\hat{\alpha}}(t,\mu;v)=\Gamma^{\tau,\hat{\alpha}}(t,\mu;\tilde{v})$ for any $v,\tilde{v}\in\Ac$ with $v(t,\cdot,\mu)=\tilde{v}(t,\cdot,\mu)$, and
 	 $\Gamma^{\tau,\hat{\alpha}}(t,\mu;v)=0$ if $v(t,\cdot,\mu)=\hat{\alpha}(t,\cdot,\mu)$.
  Consequently, suppose $Lip(\R^{d};A)\subseteq\Dc\subseteq\Ac$, $\hat{\alpha}$ is an equilibrium strategy if and only if 
 	\begin{equation}\label{charactization1}
 		\Delta^{\hat{\alpha}}(t,\mu;v)\geq0, \ \  \forall v\in Lip(\R^{d};A),\  \mu\in\Pc_{2}(\R^{d}),\  t\in[0,T).
 	\end{equation}
 \end{proposition}
 \begin{proof}
 	 Note that for $0\leq\tau\leq t< T$ and $\mu\in\Pc_2(\R^d)$, we have
 	  \begin{equation}\label{flowexpression}
 	  	\begin{split}
 	  			&J(\tau;t,\mu;\alpha_{t,\epsilon,v})-J(\tau;t,\mu;\hat{\alpha})\\
    &=\E^0\big[\int_{t}^{t+\epsilon}\hat{f}(\tau;s,\rho^{t,\mu,\alpha_{t,\epsilon,v}}_{s},\alpha_{t,\epsilon,v}(s,\cdot,\rho^{t,\mu,\alpha_{t,\epsilon,v}}_{s}))ds+\hat{g}(\tau;\rho^{t,\mu,\alpha_{t,\epsilon,v}}_{T})\big]-V(\tau;t,\mu)\\&+\E^0[\int_{t+\epsilon}^{T}\hat{f}(\tau;s,\rho^{t,\mu,\alpha_{t,\epsilon,v}}_{s},\alpha_{t,\epsilon,v}(s,\cdot,\rho^{t,\mu,\alpha_{t,\epsilon,v}}_{s}))ds]\\&=\E^0[\int_{t}^{t+\epsilon}\hat{f}(\tau;s,\rho^{t,\mu,v}_{s},v(s,\cdot,\rho^{t,\mu,v}_{s}))ds+\hat{g}(\tau;\rho^{t+\epsilon,\rho^{t,\mu,v}_{t+\epsilon},\alpha_{t,\epsilon,v}}_{T})]-V(\tau;t,\mu)\\&+\E^0[\int_{t+\epsilon}^{T}\hat{f}(\tau;s,\rho^{t+\epsilon,\rho^{t,\mu,v}_{t+\epsilon},\alpha_{t,\epsilon,v}}_{s},\alpha_{t,\epsilon,v}(s,\cdot,\rho^{t+\epsilon,\rho^{t,\mu,v}_{t+\epsilon},\alpha_{t,\epsilon,v}}_{s}))ds]\\&=\E^0[\int_{t}^{t+\epsilon}\hat{f}(\tau;s,\rho^{t,\mu,v}_{s},v(s,\cdot,\rho^{t,\mu,v}_{s}))ds+\hat{g}(\tau;\rho^{t+\epsilon,\rho^{t,\mu,v}_{t+\epsilon},\hat{\alpha}}_{T})]-V(\tau;t,\mu)\\&+\E^0[\int_{t+\epsilon}^{T}\hat{f}(\tau;s,\rho^{t+\epsilon,\rho^{t,\mu,v}_{t+\epsilon},\hat{\alpha}}_{s},\hat{\alpha}(s,\cdot,\rho^{t+\epsilon,\rho^{t,\mu,v}_{t+\epsilon},\hat{\alpha}}_{s}))ds]
 	  			\\&=\underbrace{\E^0[\int_{t}^{t+\epsilon}\hat{f}(\tau;s,\rho^{t,\mu,v}_{s},v(s,\cdot,\rho^{t,\mu,v}_{s}))ds]}_{(1)}+\underbrace{\E^0[V(\tau;t+\epsilon,\rho^{t,\mu,v}_{t+\epsilon})-V(\tau;t,\mu)]}_{(2)},
 	  	\end{split}
 	  \end{equation}
 	  where the second equality is due to $\alpha_{t,\epsilon,v}(s,x,\mu)=v(s,x,\mu)$ for $s\in[t,t+\epsilon)$ and  the flow property  $\rho^{t,\mu,\alpha_{t,\epsilon,v}}_{s}(\omega^0)=\rho^{t+\epsilon,\rho^{t,\mu,v}_{t+\epsilon},\alpha_{t,\epsilon,v}}_{s}(\omega^0)$ for $\P^0$-a.s. $\omega^0\in\Omega^0$ and $t+\epsilon\leq s\leq T$, and the third equality results from the fact that $\alpha_{t,\epsilon,v}(s,x,\mu)=\hat{\alpha}(s,x,\mu)$ for $s\in[t+\epsilon,T)$.

 	  We first handle the term (2) of \eqref{flowexpression}. The standard estimate
 	  	$\E\left[\sup_{t\leq s\leq t+\epsilon}|X^{t,\xi,v}_{s}|^{2}\right]\leq C\left(1+\E|\xi|^{2}\right)<\infty$,
 	combined with the linear growth of $b$, $\sigma$, $\sigma_0$ and $v$, implies 
 	  \begin{equation*}
 	  	\begin{split}
 	  		\E\bigg[&\int_{t}^{t+\epsilon}\bigg(\bigg|b(r,X^{t,\xi,v}_{r},v(r,X^{t,\xi,v}_{r},\rho^{t,\mu,v}_{r}),Idv(r,\cdot,\rho^{t,\mu,v}_{r})\star\rho^{t,\mu,v}_{r})\bigg|^{2}\\&+\bigg|\sigma(r,X^{t,\xi,v}_{r},v(r,X^{t,\xi,v}_{r},\rho^{t,\mu,v}_{r}),Idv(r,\cdot,\rho^{t,\mu,v}_{r})\star\rho^{t,\mu,v}_{r})\bigg|^{2}\\&+\bigg|\sigma_0(r,X^{t,\xi,v}_{r},v(r,X^{t,\xi,v}_{r},\rho^{t,\mu,v}_{r}),Idv(r,\cdot,\rho^{t,\mu,v}_{r})\star\rho^{t,\mu,v}_{r})\bigg|^{2}\bigg)dr\bigg]<\infty.
 	  	\end{split}
 	  \end{equation*}
 	  Applying the Itô's formula along  a flow of conditional probability measures (see \cite{Delarue2015}), we obtain 
 	   \begin{equation}\label{ito}
 	   	\begin{split}
 	   			V(\tau;t+\epsilon,\rho^{t,\mu,v}_{s})-V(\tau;t,\mu)&=\int_{t}^{t+\epsilon}\partial_{t}V(\tau;s,\rho^{t,\mu,v}_{s})ds+\int_{t}^{t+\epsilon}\E_{W^0}\bigg[\partial_{\mu}V(\tau;s,\rho^{t,\mu,v}_{s})(X^{t,\xi,v}_{s})\cdot \tilde{b}(s)\\&+\frac{1}{2}\mathrm{tr}(\partial_{x}\partial_{\mu}V(\tau;s,\rho^{t,\mu,v}_{s})(X^{t,\xi,v}_{s})(\tilde{\sigma}(s)\tilde{\sigma}^{\top}(s)+\tilde{\sigma}_0(s)\tilde{\sigma}_0^{\top}(s)))\bigg]\\&+
 	   			\E_{W^0}\bigg[\E'_{W^0}\bigg[\frac{1}{2}\mathrm{tr}(\partial^2_{\mu}V(\tau;s,\rho^{t,\mu,v}_{s})(X^{t,\xi,v}_{s},X^{'t,\xi,v}_{s})\tilde{\sigma}_0(s)\tilde{\sigma}_0^{'\top}(s))\bigg]\bigg]ds\\&+
 	   			\int_{t}^{t+\epsilon}\E_{W^0}\bigg[\partial_{\mu}V(\tau;s,\rho^{t,\mu,v}_{s})(X^{t,\xi,v}_{s})^{\top}\tilde{\sigma}_0(s)\bigg]dW^0_{s}
 	   	\end{split}
 	   \end{equation}
 	   with 
 	   \begin{equation*}
 	   	\begin{split}
 	   		&\tilde{b}(s):=b(s,X^{t,\xi,v}_{s},v(s,X^{t,\xi,v}_{s},\rho^{t,\mu,v}_{s}),Idv(s,\cdot,\rho^{t,\mu,v}_{s})\star\rho^{t,\mu,v}_{s}),\\
 	   		&\tilde{\sigma}(s):=\sigma(s,X^{t,\xi,v}_{s},v(s,X^{t,\xi,v}_{s},\rho^{t,\mu,v}_{s}),Idv(s,\cdot,\rho^{t,\mu,v}_{s})\star\rho^{t,\mu,v}_{s}),\\
            &\tilde{\sigma}_0(s):=\sigma_0(s,X^{t,\xi,v}_{s},v(s,X^{t,\xi,v}_{s},\rho^{t,\mu,v}_{s}),Idv(s,\cdot,\rho^{t,\mu,v}_{s})\star\rho^{t,\mu,v}_{s}),\\
            &\tilde{\sigma}_0^{'}(s):=\sigma_0(s,X^{'t,\xi,v}_{s},v(s,X^{'t,\xi,v}_{s},\rho^{t,\mu,v}_{s}),Idv(s,\cdot,\rho^{t,\mu,v}_{s})\star\rho^{t,\mu,v}_{s}),
 	   	\end{split}
 	   \end{equation*}
 	   where $\E_{_{W^0}}=\E^1$ represents  the conditional expectation w.r.t. $\Fc^0$ and $X_s^{'t,\xi,\talpha}$ is a copy of $X_s^{t,\xi,\talpha}$ on another probability space 
 	   $(\Omega'=\Omega^0\times\Omega^{'1},\Fc^0\otimes\Fc^{'1},\P^0\times\P^{'1})$, with  $(\Omega^{'1},\Fc^{'1},\P^{'1})$ supporting  $B'$ as a copy of $B$. 
 	   
 	   Based on \eqref{growthcondition} and the linear growth of $\sigma_{0}$ and $v$, we can deduce that
 	   \begin{equation}
 	   	\begin{split}\label{dominated0}
 	   		|\partial_{\mu}V(\tau;s,\rho^{t,\mu,v}_{s})(X^{t,\xi,v}_{s})^{\top} \tilde{\sigma}_{0}(s)|\leq K(\tau)(1+\sup_{t\leq s\leq t+\epsilon}|X^{t,\xi,v}_{s}|^{2}+\E_{W^0}[\sup_{t\leq s\leq t+\epsilon}|X^{t,\xi,v}_{s}|^{2}])
 	   	\end{split}
 	   \end{equation} 
 	   for some constant $K(\tau)$, which may depend on $\tau$.  
 	   Then, the integrand in \eqref{ito} satisfies
 	\begin{equation*}
 		\E_{W^0}\bigg[\partial_{\mu}V(\tau;s,\rho^{t,\mu,v}_{s})(X^{t,\xi,v}_{s})^{\top}\tilde{\sigma}_0(s)\bigg]^{2}\leq K(\tau)(1+\E_{W^0}[\sup_{t\leq s\leq t+\epsilon}|X^{t,\xi,v}_{s}|^{2}]^{2}).
 	\end{equation*}
 	Therefore, 
 	\begin{equation*}
 		\E^0\left[\left(\int_{0}^{T}	\E_{W^0}\bigg[\partial_{\mu}V(\tau;s,\rho^{t,\mu,v}_{s})(X^{t,\xi,v}_{s})^{\top}\tilde{\sigma}_0(s)\bigg]^{2}ds\right)^{\frac{1}{2}}\right]\leq K(\tau)(1+\E[\sup_{t\leq s\leq t+\epsilon}|X^{t,\xi,v}_{s}|^{2}])<\infty,
 	\end{equation*}   
 	  which implies the stochastic integral in \eqref{ito} vanishes under $\E^0$-expectation.
 	  
 	  {Because $X^{t,\xi,v}_{s}$ is continuous in $s$ with $X^{t,\xi,v}_{t}=\xi$ and $\rho^{t,\mu,v}_{s}$ is continuous in $s$ with $\rho^{t,\mu,v}_{t}=\mu$,  by using Conditions (i) and (ii) in Assumption \ref{assumption:bsigma}, Conditions (i) and (ii) in Definition \ref{admissiblestrategy}, and Condition (i) in Assumption \ref{assumption:V}, $\partial_{\mu}V(\tau;s,\rho^{t,\mu,v}_{s})(X^{t,\xi,v}_{s})\cdot\tilde{b}(s)$ converges to $\partial_{\mu}V(\tau;t,\mu)(\xi)\cdot\tilde{b}(t)$ as $s\downarrow t$. 
    
Similar to \eqref{dominated0}, we  also have 
 	  	 \begin{equation}\label{dominated}
 	  		\begin{split}
 	  			|\partial_{\mu}V(\tau;s,\rho^{t,\mu,v}_{s})(X^{t,\xi,v}_{s})^{\top} \tilde{b}(s)|&\leq K(\tau)(1+\sup_{t\leq s\leq t+\epsilon}|X^{t,\xi,v}_{s}|^{2}+\E_{W^0}[\sup_{t\leq s\leq t+\epsilon}|X^{t,\xi,v}_{s}|^{2}]).
 	  		\end{split}
 	  	\end{equation}
 	  	Thanks to \eqref{dominated}, the dominated convergence theorem yields 
 \begin{equation}\label{limit_1}
 	\begin{split}
 			&\lim\limits_{s\downarrow t}\E^0\bigg[\E_{W^0}\bigg[\partial_{\mu}V(\tau;s,\rho^{t,\mu,v}_{s})(X^{t,\xi,v}_{s})\cdot \tilde{b}(s)\bigg]\bigg]=
 		 	\lim\limits_{s\downarrow t}\E\bigg[\partial_{\mu}V(\tau;s,\rho^{t,\mu,v}_{s})(X^{t,\xi,v}_{s})\cdot \tilde{b}(s)\bigg]\\&=\big\langle\partial_{\mu}V(\tau;t,\mu)(\cdot)\cdot b(t,\cdot,v(t,\cdot,\mu),Idv(t,\cdot,\mu)\star\mu),\mu\big\rangle.
 	\end{split}
 \end{equation}
 Similarly, one can show  
  \begin{equation}\label{limit_2}
 	\begin{split}
 		\lim\limits_{s\downarrow t}\E^0\bigg[\E_{W^0}\bigg[\frac{1}{2}\mathrm{tr}\left(\partial_{x}\partial_{\mu}V(\tau;s,\rho^{t,\mu,v}_{s})(X^{t,\xi,v}_{s})(\tilde{\sigma}(s)\tilde{\sigma}^{\top}(s)+\tilde{\sigma}_0(s)\tilde{\sigma}_0^{\top}(s)))\right)\bigg]\bigg]\\=\big\langle\frac{1}{2}\mathrm{tr}(\partial_{x}\partial_{\mu}V(\tau;t,\mu)(\cdot)(\sigma\sigma^{\top}+\sigma_0\sigma_0^{\top})(t,\cdot,v(t,\cdot,\mu),Idv(t,\cdot,\mu)\star\mu)),\mu\big\rangle
 	\end{split}
 \end{equation}
 and 
 \begin{equation}\label{limit_3}
 	\begin{split}
 		\lim\limits_{s\downarrow t}\E^0\bigg[\E_{W^0}\bigg[\E'_{W^0}\bigg[\frac{1}{2}\mathrm{tr}(\partial^2_{\mu}V(\tau;s,\rho^{t,\mu,v}_{s})(X^{t,\xi,v}_{s},X^{'t,\xi,v}_{s})\tilde{\sigma}_0(s)\tilde{\sigma}_0^{'\top}(s))\bigg]\bigg]\bigg]=\Mc^{v}V(\tau;t,\mu).
 	\end{split}
 \end{equation}
 As the mapping $(t,\mu)\rightarrow\partial_{t}V(\tau;t,\mu)$ is continuous on $[\tau,T]\times\Pc_2(\R^d)$ and $\rho^{t,\mu,v}_{s}$ is continuous in $s$ with $\rho^{t,\mu,v}_{t}=\mu$, we deduce from the quadratic growth of $\partial_{t}V(\tau;t,\mu)$ and the dominated convergence theorem that
 \begin{equation}\label{limit_4}
 	\lim\limits_{s\downarrow t}\E^0[\partial_{t}V(\tau;s,\rho^{t,\mu,v}_{s})]=\partial_{t}V(\tau;t,\mu).
 \end{equation}
 Combining \eqref{ito}, \eqref{limit_1}, \eqref{limit_2}, \eqref{limit_3} and \eqref{limit_4}, we obtain
 \begin{equation}\label{expansion1}
 \E^0[V(\tau;t+\epsilon,\rho^{t,\mu,v}_{s})-V(\tau;t,\mu)]=\epsilon\Lc^{v}V(\tau;t,\mu)+o(\epsilon).
 \end{equation}
 We next cope with the term (1) of \eqref{flowexpression} and  rewrite  it as
 \begin{equation*}
 \E^0\left[\int_{t}^{t+\epsilon}\hat{f}(\tau;s,\rho^{t,\mu,v}_{s},v(s,\cdot,\rho^{t,\mu,v}_{s}))ds\right]=	\int_{t}^{t+\epsilon}\E\left[f(\tau;s,X^{t,\xi,v}_{s},\overline{v}(s),\P^{W^0}_{(X^{t,\xi,v}_{s},\overline{v}(s))})\right]ds,
 \end{equation*}
 where $\overline{v}(s):=v(s,X^{t,\xi,v}_{s},\rho^{t,\mu,v}_{s})$.
 The quadratic growth property of $f$ and the linear growth property of $v$ yield 
 \begin{equation*}
 	\begin{split}
 			|f(\tau;s,X^{t,\xi,v}_{s},\overline{v}(s),\P^{W^0}_{(X^{t,\xi,v}_{s},\overline{v}(s))})|
 			\leq K(\tau)(1+\sup_{t\leq s\leq t+\epsilon}|X^{t,\xi,v}_{s}|^{2}+\E_{W^0}[\sup_{t\leq s\leq t+\epsilon}|X^{t,\xi,v}_{s}|^{2}]).
 	\end{split}
 \end{equation*}
 Similarly,  Condition (ii) in Assumption \ref{assumption:fg} and Conditions (i)-(ii) in Definition \ref{admissiblestrategy} imply 
 \begin{equation*}
 	f(\tau;s,X^{t,\xi,v}_{s},\overline{v}(s),\P^{W^0}_{(X^{t,\xi,v}_{s},\overline{v}(s))})\rightarrow f(\tau;t,\xi,\overline{v}(t),\P^{W^0}_{(\xi,\overline{v}(t))})\,\ \text{as}\, s\downarrow t.
 \end{equation*}
 Therefore, the dominated convergence theorem yields  
 \begin{equation}\label{expansion2}
 	E^{0}\left[\int_{t}^{t+\epsilon}\hat{f}(\tau;s,\rho^{t,\mu,v}_{s},v(s,\cdot,\rho^{t,\mu,v}_{s}))ds\right]=\epsilon \hat{f}(\tau;t,\mu,v(t,\cdot,\mu))+o(\epsilon).
 \end{equation}
 Combining \eqref{flowexpression}, \eqref{expansion1} and \eqref{expansion2}, we have 
 \begin{equation}\label{expansion3}
 	J(\tau;t,\mu;\alpha_{t,\epsilon,v})-J(\tau;t,\mu;\hat{\alpha})=\epsilon\Gamma^{\tau,\hat{\alpha}}(t,\mu;v)+o(\epsilon),
 \end{equation}
 where $\Gamma^{\tau,\hat{\alpha}}$ is given by \eqref{Gamma}. Setting $\tau=t$, we readily have \eqref{expansionthem}.
 
 It is straightforward to see from \eqref{Gamma} that $\Gamma^{\tau,\hat{\alpha}}(t,\mu;v)=\Gamma^{\tau,\hat{\alpha}}(t,\mu;\tilde{v})$ for any $v,\tilde{v}\in\Ac$ with $v(t,\cdot,\mu)=\tilde{v}(t,\cdot,\mu)$.  Moreover, by \eqref{expansion3}, we have $\Gamma^{\tau,\hat{\alpha}}(t,\mu;\hat{\alpha})=0$. As a result, for any $v\in\Ac$ with $v(t,\cdot,\mu)=\hat{\alpha}(t,\cdot,\mu)$, we have $\Gamma^{\tau,\hat{\alpha}}(t,\mu;v)=\Gamma^{\tau,\hat{\alpha}}(t,\mu;\hat{\alpha})=0$.
 
 Finally, we conclude from \eqref{expansionthem} that for any $\mu\in\Pc_{2}(\R^{d})$, $t\in[0,T)$, and $v\in\Dc$, 
 \begin{equation*}
 	\liminf\limits_{\epsilon\downarrow0}\frac{J(t;t,\mu;\alpha_{t,\epsilon,v})-J(t;t,\mu;\hat{\alpha})}{\epsilon}=\Gamma^{t,\hat{\alpha}}(t,\mu;v)=\Gamma^{t,\hat{\alpha}}(t,\mu;v(t,\cdot,\mu)).
 \end{equation*}
 In view that $Lip(\R^{d};A)\subseteq\Dc\subseteq\Ac$, it holds that $\hat{\alpha}$ is an equilibrium strategy if and only if \eqref{charactization1} holds.} 	   
 \end{proof}
 	Proposition \ref{Theorem:charactization}  implies that under certain conditions, $V$ should satisfy the following  { equilibrium HJB equation on the Wasserstein space} that
 	\begin{equation}\label{masterequation:valuefunction}
 		\begin{split}
 			&\Lc^{\hat{\alpha}}V(\tau;t,\mu)+\hat{f}(\tau;t,\mu,\hat{\alpha}(t,\cdot,\mu))=0,\quad 0\leq\tau\leq t<T,\,\mu\in\Pc_2(\R^d),\nonumber\\
		&V(\tau;T,\mu)=\hat{g}(\tau;\mu),\quad(\tau,\mu)\in[0,T]\times\Pc_2(\R^d)
 		\end{split}
 	\end{equation}
 	with 
 	\begin{equation}\label{masterequation:equilibrium}
 		\begin{split}
 			\hat{\alpha}(t,\cdot,\mu)=&\argmin_{v\in Lip(\R^{d};A)}\big\langle\partial_{\mu}V(t;t,\mu)(\cdot)\cdot b(t,\cdot,v(\cdot),Idv\star\mu)\\&+\frac{1}{2}\mathrm{tr}(\partial_{x}\partial_{\mu}V(t;t,\mu)(\cdot)(\sigma\sigma^{\top}+\sigma_0\sigma_0^{\top})(t,\cdot,v(\cdot),Idv\star\mu)),\mu\big\rangle
 			+\Mc^{v}V(t;t,\mu)+\hat{f}(t;t,\mu,v(\cdot)).
 		\end{split}
 	\end{equation}
   We next present the verification theorem, 
   the main result of this paper, that under certain conditions, the system \eqref{masterequation:valuefunction}-\eqref{masterequation:equilibrium} provides an equivalent characterization of the equilibrium strategy and the value function. 
 	\begin{theorem}\label{corollary:verification}
 		 Suppose that Assumptions \ref{assumption:bsigma} and \ref{assumption:fg} hold. If $Lip(\R^{d};A)\subseteq\Dc\subseteq\Ac$ and the system \eqref{masterequation:valuefunction}-\eqref{masterequation:equilibrium} admits a solution $V$ satisfying Assumption \ref{assumption:V}  and $\hat{\alpha}\in \Ac$, then $\hat{\alpha}$ is a closed-loop equilibrium strategy.
 	\end{theorem}
 \begin{proof}
Applying  Itô's formula to $V(\tau;s,\rho^{t,\mu,\hat{\alpha}}_{s})$ between $s=t$ and $s=T$ and taking $E^0$-expectation, we obtain  
 \begin{equation*}
 	\E^0[V(\tau;T,\rho^{t,\mu,\hat{\alpha}}_{T})-V(\tau;t,\mu)]=\E^0\left[\int_{t}^{T}\Lc^{\hat{\alpha}}V(\tau;s,\rho^{t,\mu,\hat{\alpha}}_{s})ds\right].
 \end{equation*}
 Consequently, by \eqref{masterequation:valuefunction}, we arrive at 
 \begin{equation*}
 	V(\tau;t,\mu)=\E^0 \Big[ \int_t^T \hat f(\tau;s,\rho_s^{t,\mu,\hat{\alpha}},\hat{\alpha}(s,\cdot,\rho_s^{t,\mu,\hat{\alpha}})) ds + \hat g(\tau;\rho_T^{t,\mu,\hat{\alpha}}) \Big].
 \end{equation*}
 The conclusion readily follows from Proposition \ref{Theorem:charactization}.
 \end{proof}

\section{Time-inconsistent LQ Extended MFC Problems}\label{sec:app}

In this section, we focus on LQ-type extended MFC problems and show the existence of solution to the
{ equilibrium HJB equation} \eqref{masterequation:valuefunction}-\eqref{masterequation:equilibrium} satisfying Assumption \ref{assumption:V} and characterize the closed loop equilibrium. To this end, let us consider a linear McKean-Vlasov dynamics with coefficients given by
 \begin{equation*}
 	\begin{split}
 		&b(t,x,\mu,a,\lambda)=b_{0}(t)+B(t)x+\overline{B}(t)\overline{\mu}+C(t)a+\overline{C}(t)\overline{\lambda},\\
 		&\sigma(t,x,\mu,a,\lambda)=\vartheta (t)+D(t)x+\overline{D}(t)\overline{\mu}+F(t)a+\overline{F}(t)\overline{\lambda},\\
 		&\sigma_0(t,x,\mu,a,\lambda)=\vartheta_0 (t)+D_0(t)x+\overline{D}_0(t)\overline{\mu}+F_0(t)a+\overline{F}_0(t)\overline{\lambda}
 	\end{split}
 \end{equation*}
 for $(t,x,\mu,a,\lambda)\in[0,T]\times\R^{d}\times\Pc_{2}(\R^{d})\times \R^{m}\times\Pc_{2}(\R^{m})$, where  $\overline{\mu}:=\int_{\R^{d}}x\mu(dx)$ and $\overline{\lambda}:=\int_{\R^{m}}a\lambda(da)$.
Here $B$, $\overline{B}$, $D$, $\overline{D}$, $D_0$, $\overline{D}_0$ are deterministic continuous functions valued in $\R^{d\times d}$, and $C$, $\overline{C}$, $F$, $\overline{F}$, $F_0$, $\overline{F}_0$ are deterministic continuous functions valued in $\R^{d\times m}$, and $b_{0}$, $\vartheta$, $\vartheta_{0}$ are deterministic continuous functions valued in $\R^{d}$.  The
 quadratic cost functions are given by
 \begin{equation*}
 	\begin{split}
 		 	&f(\tau;t,x,\mu,a,\lambda)=x^{\top}Q(\tau;t)x+\overline{\mu}^{\top}\overline{Q}(\tau;t)\overline{\mu}+a^{\top}R(\tau;t)a+\overline{\lambda}^{\top}\overline{R}(\tau;t)\overline{\lambda}+2x^{\top}M(\tau;t)a\\
 		 	&\quad\quad\quad\quad +2\overline{\mu}^{\top}\overline{M}(\tau;t)\overline{\lambda}+q(\tau;t)\cdot x+\overline{q}(\tau;t)\cdot\overline{\mu}+r(\tau;t)\cdot a+\overline{r}(\tau;t)\cdot\overline{\lambda},\\
 		 	&g(\tau;x,\mu)=x^{\top}P(\tau)x+\overline{\mu}^{\top}\overline{P}(\tau)\overline{\mu}+p(\tau)\cdot x+\overline{p}(\tau)\cdot\overline{\mu},
 	\end{split}
 \end{equation*}
 where $Q$ and $\overline{Q}$ are deterministic continuous functions  on  $\Delta[0,T]=\left\{(\tau,t)|0\leq \tau\leq t\leq T\right\}$ valued in $\S^{d}$,  $R$ and $\overline{R}$ are deterministic continuous functions  on $\Delta[0,T]$ valued in $\S^{m}$, $M$ and $\overline{M}$ are deterministic continuous functions  on $\Delta[0,T]$ valued in $\R^{d\times m}$,  $q$ and $\overline{q}$ are deterministic continuous functions  on $\Delta[0,T]$ valued in $\R^{d}$ , $r$ and $\overline{r}$ are deterministic continuous functions  on $\Delta[0,T]$ valued in $\R^{m}$,  $P$ and $\overline{P}$ are deterministic continuous functions  on $[0,T]$ valued in $\S^{d}$ and $p$ and $\overline{p}$ are deterministic continuous functions  on $[0,T]$ valued in $\R^{d}$.
 \begin{remark}\label{Continuousextension}
Note that a continuous function $K$ defined on $\Delta[0,T]$ can be continuously extended to a function on $[0,T]\times[0,T]$  that
	$K(\tau;t)=K(\tau;\tau)$, $t\in[0,\tau)$. Therefore, in this paper, continuous functions defined on \(\Delta[0,T]\) are often treated as continuous functions defined on \([0,T] \times [0,T]\).
 \end{remark}
 The functions $\hat{f}$ and $\hat{g}$ defined in \eqref{hat_fg} are then given by
 \begin{equation*}
 	\begin{cases}
 	\hat{f}(\tau;t,\mu,\talpha)=Var(\mu)(Q(\tau;t))+\overline{\mu}^{\top}(Q(\tau;t)+\overline{Q}(\tau;t))\overline{\mu}\\\quad\quad\quad\quad+Var(\talpha\star\mu)(R(\tau;t))+\overline{\talpha\star\mu}^{\top}(R(\tau;t)+\overline{R}(\tau;t))\overline{\talpha\star\mu}\\
 	\quad\quad\quad\quad+2\overline{\mu}^{\top}(M(\tau;t)+\overline{M}(\tau;t))\overline{\talpha\star\mu}+2\int_{\R^{d}}(x-\overline{\mu})^{\top}M(\tau;t)\talpha(x)\mu(dx)\\
 	\quad\quad\quad\quad+(q(\tau;t)+\overline{q}(\tau;t))\cdot\overline{\mu}+(r(\tau;t)+\overline{r}(\tau;t))\cdot\overline{\talpha\star\mu},\\
 	\hat{g}(\tau;\mu)=Var(\mu)(P(\tau))+\overline{\mu}^{\top}(P(\tau)+\overline{P}(\tau))\overline{\mu}+(p(\tau)+\overline{p}(\tau))\cdot\overline{\mu}
 	\end{cases}
 \end{equation*}
 for any $(\tau,t,\mu)\in [0,T]\times[0,T]\times\Pc_{2}(\R^{d})$, $\talpha\in L(\R^{d};\R^{m})$, where we set for any $\Lambda$ in $\S^{d}$ (resp. in $\S^{m}$), and $\mu\in\Pc_{2}(\R^{d})$ (resp. in $\Pc_{2}(\R^{m})$) that
 
 $$\overline{\mu}_{2}(\Lambda):=\int x^{\top}\Lambda x\mu(dx), \quad Var(\mu)(\Lambda):=\overline{\mu}_{2}(\Lambda)-\overline{\mu}^{\top}\Lambda\overline{\mu}.$$ Here, we always assume $ Lip(\R^{d};\R^{m})\subseteq\Dc$ and  look for a solution to the system \eqref{masterequation:valuefunction}-\eqref{masterequation:equilibrium} in the form:
 \begin{equation}\label{ansatz}
 	V(\tau;t,\mu)=Var(\mu)(\Lambda(\tau;t))+\overline{\mu}^{\top}\beta(\tau;t)\overline{\mu}+\gamma(\tau;t)\cdot\overline{\mu}+\kappa(\tau;t)
 \end{equation}
 for some undetermined map $\Lambda,\beta: \Delta[0,T]\rightarrow \S^{d}$, $\gamma: \Delta[0,T]\rightarrow \R^{d}$, and $\kappa:\Delta[0,T]\rightarrow \R$.
 
 Then, it holds that $V$ satisfies  \eqref{masterequation:valuefunction}-\eqref{masterequation:equilibrium} if and only if 
 \begin{equation*}
 	\begin{split}
 		&Var(\mu)(\Lambda(\tau;T))+\overline{\mu}^{\top}\beta(\tau;T)\overline{\mu}+\gamma(\tau;T)\cdot\overline{\mu}+\kappa(\tau;T)\\=&Var(\mu)(P(\tau))+\overline{\mu}^{\top}(P(\tau)+\overline{P}(\tau))\overline{\mu}+(p(\tau)+\overline{p}(\tau))\cdot\overline{\mu},
 	\end{split}
 \end{equation*}
 for all $(\tau,\mu)\in[0,T]\times\Pc_{2}(\R^{d})$, and 
 \begin{equation}\label{LQmasterequation}
 	\begin{split}
 	&Var(\mu)\left(\Lambda'(\tau;t)+Q(\tau;t)+D^{\top}(t)\Lambda(\tau;t)D(t)+D_0^{\top}(t)\Lambda(\tau;t)D_0(t)+\Lambda(\tau;t)B(t)+B^{\top}(t)\Lambda(\tau;t)\right)\\&\quad+G^{\mu}(\tau;t,\hat{\alpha}(t,\cdot,\mu))
 	+\overline{\mu}^{\top}\bigg(\beta'(\tau;t)+Q(\tau;t)+\overline{Q}(\tau;t)+(D(t)+\overline{D}(t))^{\top}\Lambda(\tau;t)(D(t)+\overline{D}(t))\\&\quad+(D_0(t)+\overline{D}_0(t))^{\top}\beta(\tau;t)(D_0(t)+\overline{D}_0(t))+\beta(\tau;t)(B(t)+\overline{B}(t))+(B(t)+\overline{B}(t))^{\top}\beta(\tau;t)\bigg)\overline{\mu}\\&\quad+\bigg(\gamma'(\tau;t)+q(\tau;t)+\overline{q}(\tau;t)+(B(t)+\overline{B}(t))^{\top}\gamma(\tau;t)+2(D(t)+\overline{D}(t))^{\top}\Lambda(\tau;t)\vartheta(t)\\&\quad+2(D_0(t)+\overline{D}_0(t))^{\top}\beta(\tau;t)\vartheta_0(t)+2\beta(\tau;t)b_{0}(t)\bigg)\cdot\overline{\mu}+\kappa'(\tau;t)+b_{0}^{\top}(t)\gamma(\tau;t)+\vartheta^{\top}(t)\Lambda(\tau;t)\vartheta(t)\\&\quad+\vartheta_0^{\top}(t)\beta(\tau;t)\vartheta_0(t)=0,\quad 0\leq\tau\leq t <T,\ \mu\in\Pc_{2}(\R^{d})\end{split}
 \end{equation}
where the functional $G^{\mu}(\tau;t,\cdot):L^{2}_{\mu}(\R^{m})\supset Lip(\R^{d};\R^{m})\rightarrow\R$ is defined by 
 \begin{equation*}
 	\begin{split}
 		 	G^{\mu}(\tau;t,\talpha)&:=Var(\talpha\star\mu)(U(\tau;t))+\overline{\talpha\star\mu}^{\top}W(\tau;t)\overline{\talpha\star\mu}+2\int_{\R^{d}}(x-\overline{\mu})^{\top}S(\tau;t)\talpha(x)\mu(dx)\\&+2\overline{\mu}^{\top}Z(\tau;t)\overline{\talpha\star\mu}+Y^{\top}(\tau;t)\overline{\talpha\star\mu}
 	\end{split}
 \end{equation*}
with 
 \begin{equation*}
 	\begin{cases}
 U(\tau;t):=F^{\top}(t)\Lambda(\tau;t)F(t)+F_0^{\top}(t)\Lambda(\tau;t)F_0(t)+R(\tau;t),\\
 W(\tau;t):=(F(t)+\overline{F}(t))^{\top}\Lambda(\tau;t)(F(t)+\overline{F}(t))+(F_0(t)+\overline{F}_0(t))^{\top}\beta(\tau;t)(F_0(t)+\overline{F}_0(t))\\\qquad\qquad\qquad\qquad\qquad\qquad\qquad+R(\tau;t)+\overline{R}(\tau;t),\\
 S(\tau;t):=D^{\top}(t)\Lambda(\tau;t)F(t)+D_0^{\top}(t)\Lambda(\tau;t)F_0(t)+\Lambda(\tau;t)C(t)+M(\tau;t),\\
 Z(\tau;t):=(D(t)+\overline{D}(t))^{\top}\Lambda(\tau;t)(F(t)+\overline{F}(t))+(D_0(t)+\overline{D}_0(t))^{\top}\beta(\tau;t)(F_0(t)+\overline{F}_0(t))\\\qquad\qquad\qquad\quad\qquad\qquad\qquad\quad+\beta(\tau;t)(C(t)+\overline{C}(t))+M(\tau;t)+\overline{M}(\tau;t),\\
 Y(\tau;t):=(C(t)+\overline{C}(t))^{\top}\gamma(\tau;t)+2(F(t)+\overline{F}(t))^{\top}\Lambda(\tau;t)\vartheta(t)\\\qquad\qquad\qquad\qquad\qquad\qquad\qquad+2(F_0(t)+\overline{F}_0(t))^{\top}\beta(\tau;t)\vartheta_0(t)+r(\tau;t)+\overline{r}(\tau;t),
  	\end{cases}
 \end{equation*}
  and $\hat{\alpha}(t,\cdot,\mu)$ ($\hat{\alpha}$ for short) is the infimum of the functional $G^{\mu}_{t}(\cdot):=G^{\mu}(t;t,\cdot)$. After some direct calculations, the Gateaux derivative of $G^{\mu}_{t}$ at $\hat{\alpha}$ in the direction $\theta\in L^{2}_{\mu}(\R^{m})$ can be derived as
  \begin{equation*}
  	DG^{\mu}_{t}(\hat{\alpha},\theta):=\lim\limits_{\epsilon\rightarrow0}\frac{G^{\mu}_{t}(\hat{\alpha}+\epsilon\theta)-G^{\mu}_{t}(\hat{\alpha})}{\epsilon}=\int_{\R^{d}}\dot{g}^{\mu}_{t}(x,\hat{\alpha})\cdot\theta(x)\mu(dx)
  \end{equation*}
  with
  \begin{equation*}
  	\dot{g}^{\mu}_{t}(x,\hat{\alpha})=2U(t;t)\hat{\alpha}+2(W(t;t)-U(t;t))\overline{\hat{\alpha}\star\mu}+2S^{\top}(t;t)(x-\overline{\mu})+2Z^{\top}(t;t)\overline{\mu}+Y(t;t).
  \end{equation*}
  Suppose that the symmetric matrices $U(t;t)$ and $W(t;t)$ are positive and hence invertible (that will be discussed later). Then, the functional $G^{\mu}_{t}$ is convex and coercive on the Hilbert space $L^{2}_{\mu}(\R^{m})$, and attains its infimum at some $\hat{\alpha}$ s.t. $DG^{\mu}_{t}(\hat{\alpha},\cdot)$ vanishes, which implies 
  \begin{equation}\label{feedback}
  	\hat{\alpha}(t,x,\mu)=-U^{-1}(t;t)S^{\top}(t;t)(x-\overline{\mu})-W^{-1}(t;t)Z^{\top}(t;t)\overline{\mu}-\frac{1}{2}W^{-1}(t;t)Y(t;t).
  \end{equation}
  It is clear that $\hat{\alpha}(t,\cdot,\mu)$ lies in $Lip(\R^{d};\R^{m})$, and 
  some straightforward calculations lead to
  {\small
  \begin{equation*}
  	\begin{split}
  		G^{\mu}(\tau;t,\hat{\alpha})&=Var(\mu)\bigg(S(t;t)U^{-1}(t;t)U(\tau;t)U^{-1}(t;t)S^{\top}(t;t)-S(\tau;t)U^{-1}(t;t)S^{\top}(t;t)-S(t;t)U^{-1}(t;t)S^{\top}(\tau;t)\bigg)\\&+\overline{\mu}^{\top}\bigg(Z(t;t)W^{-1}(t;t)W(\tau;t)W^{-1}(t;t)Z^{\top}(t;t)-Z(\tau;t)W^{-1}(t;t)Z^{\top}(t;t)-Z(t;t)W^{-1}(t;t)Z^{\top}(\tau;t)\bigg)\overline{\mu}\\&+\bigg(Y^{\top}(t;t)W^{-1}(t;t)W(\tau;t)W^{-1}(t;t)Z^{\top}(t;t)-Y^{\top}(t;t)W^{-1}(t;t)Z^{\top}(\tau;t)-Y^{\top}(\tau;t)W^{-1}(t;t)Z^{\top}(t;t)\bigg)\overline{\mu}\\&+\frac{1}{4}Y^{\top}(t;t)W^{-1}(t;t)W(\tau;t)W^{-1}(t;t)Y(t;t)-\frac{1}{2}
  		Y^{\top}(\tau;t)W^{-1}(t;t)Y(t;t).
  	\end{split}
  \end{equation*}}
  Plugging the above expression back to \eqref{LQmasterequation}, we obtain that  $\Lambda$, $\beta$, $\gamma$, and $\kappa$ shall satisfy
  \begin{align}
  	&\begin{cases}\label{riccati1}
  		\Lambda'(\tau;t)+Q(\tau;t)+D^{\top}(t)\Lambda(\tau;t)D(t)+D_0^{\top}(t)\Lambda(\tau;t)D_0(t)+\Lambda(\tau;t)B(t)+B^{\top}(t)\Lambda(\tau;t)\\+S(t,t,\Lambda(t;t))U^{-1}(t,t,\Lambda(t;t))U(\tau,t,\Lambda(\tau;t))U^{-1}(t,t,\Lambda(t;t))S^{\top}(t,t,\Lambda(t;t))\\-S(\tau,t,\Lambda(\tau;t))U^{-1}(t,t,\Lambda(t;t))S^{\top}(t,t,\Lambda(t;t))\\-S(t,t,\Lambda(t;t))U^{-1}(t,t,\Lambda(t;t))S^{\top}(\tau,t,\Lambda(\tau;t))=0,
  		\\
  		\Lambda(\tau;T)=P(\tau),
  	\end{cases}\\
  	&\begin{cases}\label{riccati2}
  		\beta'(\tau;t)+Q(\tau;t)+\overline{Q}(\tau;t)+(D(t)+\overline{D}(t))^{\top}\Lambda(\tau;t)(D(t)+\overline{D}(t))\\+(D_0(t)+\overline{D}_0(t))^{\top}\beta(\tau;t)(D_0(t)+\overline{D}_0(t))+\beta(\tau;t)(B(t)+\overline{B}(t))+(B(t)+\overline{B}(t))^{\top}\beta(\tau;t)\\+Z(t,t,\Lambda(t;t),\beta(t;t))W^{-1}(t,t,\Lambda(t;t),\beta(t;t))W(\tau,t,\Lambda(\tau;t),\beta(\tau;t))W^{-1}(t,t,\Lambda(t;t),\beta(t;t))\\
    \times Z^{\top}(t,t,\Lambda(t;t),\beta(t;t))-Z(\tau,t,\Lambda(\tau;t),\beta(\tau;t))W^{-1}(t,t,\Lambda(t;t),\beta(t;t))Z^{\top}(t,t,\Lambda(t;t),\beta(t;t))
  		\\-Z(t,t,\Lambda(t;t),\beta(t;t))W^{-1}(t,t,\Lambda(t;t),\beta(t;t))Z^{\top}(\tau,t,\Lambda(\tau;t),\beta(\tau;t))=0,\\
  		\beta(\tau;T)=P(\tau)+\overline{P}(\tau),
  	\end{cases}\\
  	&\begin{cases}\label{linearode1}
  		\gamma'(\tau;t)+q(\tau;t)+\overline{q}(\tau;t)+(B(t)+\overline{B}(t))^{\top}\gamma(\tau;t)+2(D(t)+\overline{D}(t))^{\top}\Lambda(\tau;t)\vartheta(t)\\+2(D_0(t)+\overline{D}_0(t))^{\top}\beta(\tau;t)\vartheta_0(t)+2\beta(\tau;t)b_{0}(t)\\+Z(t,t,\Lambda(t;t),\beta(t;t))W^{-1}(t,t,\Lambda(t;t),\beta(t;t))W(\tau,t,\Lambda(\tau;t),\beta(\tau;t))W^{-1}(t,t,\Lambda(t;t),\beta(t;t))\\
    \times Y(t,t,\Lambda(t;t),\gamma(t;t))-Z(\tau,t,\Lambda(\tau;t),\beta(\tau;t))W^{-1}(t,t,\Lambda(t;t),\beta(t;t))Y(t,t,\Lambda(t;t),\gamma(t;t))\\-Z(t,t,\Lambda(t;t),\beta(t;t))W^{-1}(t,t,\Lambda(t;t),\beta(t;t))Y(\tau,t,\Lambda(\tau;t),\gamma(\tau;t))=0,\\
  		\gamma(\tau;T)=p(\tau)+\overline{p}(\tau),
  	\end{cases}\\
  	&\begin{cases}\label{linearode2}
  		\kappa'(\tau;t)+b_{0}^{\top}(t)\gamma(\tau;t)+\vartheta^{\top}(t)\Lambda(\tau;t)\vartheta(t)+\vartheta_0^{\top}(t)\beta(\tau;t)\vartheta_0(t)\\+\frac{1}{4}Y^{\top}(t,t,\Lambda(t;t),\gamma(t;t))W^{-1}(t,t,\Lambda(t;t),\beta(t;t))W(\tau,t,\Lambda(\tau;t),\beta(\tau;t))W^{-1}(t,t,\Lambda(t;t),\beta(t;t))\\
    \times Y(t,t,\Lambda(t;t),\gamma(t;t))-\frac{1}{2}
  		Y^{\top}(\tau,t,\Lambda(\tau;t),\gamma(\tau;t))W^{-1}(t,t,\Lambda(t;t),\beta(t;t))Y(t,t,\Lambda(t;t),\gamma(t;t))=0,\\
  		\kappa(\tau;T)=0.
  	\end{cases}
  \end{align}
  Therefore, the system \eqref{masterequation:valuefunction}-\eqref{masterequation:equilibrium} in the LQ framework is reduced to the system of nonlocal Riccati equations \eqref{riccati1} and \eqref{riccati2} for $\Lambda$ and $\beta$,  the linear nonlocal ODE \eqref{linearode1} as well as the linear ODE \eqref{linearode2} for $\gamma$ and $\kappa$.  {Suppose that there exists a solution $(\Lambda,\beta,\gamma,\kappa)$ to \eqref{riccati1}-\eqref{linearode2} s.t.  $(U(t;t),W(t;t))$ lies in $\S^{m}_{>+}\times\S^{m}_{>+}$ for all $t\in[0,T]$. Then, it is easy to see that 
  the function $V$ defined by \eqref{ansatz} with $\left(\Lambda,\beta,\gamma,\kappa\right)$ satisfies Assumption \ref{assumption:V} and $\hat{\alpha}\in\Ac$.  Therefore, $\hat{\alpha}$ defined by \eqref{feedback} is an equilibrium strategy. We note that the system \eqref{riccati1}-\eqref{linearode2} differs from the ones in\cite{Yong2013,Yong2017,Ni2019,Wang2019} due to the dependence of the control distribution and common noise. Our
system \eqref{riccati1}-\eqref{linearode2} also differs substantially from the counterparts in  \cite{Hu2012,Ni2017,CCPW,LM23} for single agent's control problems, which deserves some careful investigations.

For technical convenience, we shall focus on the case when $M=\overline{M}=0$ (i.e., no cross term between the state and the control in the cost function $f$) and analyze the well-posedness of system  \eqref{riccati1}-\eqref{linearode2}, which is one of our main results.
 \begin{proposition}\label{Pro:riccati}
Assume that for some $\delta>0$,
\begin{equation}\label{condition:pd}
	\begin{cases}
		Q(\tau;t),\, Q(\tau;t)+\overline{Q}(\tau;t)\geq0, \,\ 0\leq\tau\leq t\leq T,\\
		R(\tau;t),\, R(\tau;t)+\overline{R}(\tau;t)\geq\delta I, \,\ 0\leq\tau\leq t\leq T,\\
		P(\tau),\, P(\tau)+\overline{P}(\tau)\geq0,\,\  0\leq\tau\leq T,
	\end{cases}
\end{equation}
and the following monotonicity conditions are satisfied: for $0\leq t\leq\tau\leq s\leq T$,
\begin{equation}\label{condition:monotonicity}
	\begin{cases}
		Q(t;s)\leq Q(\tau;s),\quad Q(t;s)+\overline{Q}(t;s)\leq Q(\tau;s)+\overline{Q}(\tau;s),\\
		R(t;s)\leq R(\tau;s),\quad R(t;s)+\overline{R}(t;s)\leq R(\tau;s)+\overline{R}(\tau;s),\\
		P(t)\leq P(\tau),\quad P(t)+\overline{P}(t)\leq P(\tau)+\overline{P}(\tau).
	\end{cases}
\end{equation}
Then, the nonlocal Riccati equation system \eqref{riccati1}-\eqref{linearode2} admits a unique solution. Moreover,   $(U(t;t),W(t;t))$ lies in $\S^{m}_{>+}\times\S^{m}_{>+}$ for all $t\in[0,T]$.
\end{proposition}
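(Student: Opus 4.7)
The plan is to exploit the triangular structure of \eqref{riccati1}--\eqref{linearode2}: equation \eqref{riccati1} is self-contained in $\Lambda$; once $\Lambda$ is known, \eqref{riccati2} has the same type of nonlocal structure for $\beta$; then \eqref{linearode1} is a nonlocal \emph{linear} ODE for $\gamma$ with $\Lambda,\beta$ as inputs; and finally \eqref{linearode2} becomes a standard linear ODE for $\kappa$. I would solve the four equations in this order.

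The crux is \eqref{riccati1}. The key structural observation is that if the diagonal map $\Theta(t):=\Lambda(t;t)$ is treated as a \emph{given} continuous $\S^d$-valued function, then for each fixed $\tau\in[0,T]$ the equation for $\Lambda(\tau;\cdot)$ on $[\tau,T]$ becomes a \emph{linear} matrix ODE in $\Lambda(\tau;t)$: because $M=\overline{M}=0$, the maps $S(\tau,t,\cdot)$ and $U(\tau,t,\cdot)$ are affine in $\Lambda(\tau;t)$, while the Riccati-type nonlinearity is absorbed into the factors $U^{-1}(t,t,\Theta(t))$ and $S(t,t,\Theta(t))$, now regarded as known coefficients. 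This motivates the fixed-point map
\[
\Phi:\Theta\;\longmapsto\;\bigl[\,t\mapsto \Lambda^{\Theta}(t;t)\,\bigr],
\]
where $\Lambda^{\Theta}(\tau;\cdot)$ is the unique solution of that linear backward ODE with terminal $\Lambda^{\Theta}(\tau;T)=P(\tau)$. Any fixed point of $\Phi$ in a suitable subset of $C([0,T];\S^d_{+})$ yields a solution of \eqref{riccati1}. Under $\Theta(t)\geq 0$ and $R(t;t)\geq \delta I$ from \eqref{condition:pd}, we have $U(t;t,\Theta(t))\geq \delta I$, so $\Phi$ is well-defined and locally Lipschitz; a Banach contraction on a short horizon $[T-\eta,T]$ (where $\Theta$ stays close to the PSD terminal $P$) gives local existence and uniqueness.

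The decisive step is an a priori bound $0\leq \Lambda(\tau;t)\leq K$ on $\Delta[0,T]$ that prevents finite-time blow-up and extends the local solution to all of $[0,T]$. For the upper bound, I would freeze the preference index and let $\Lambda^{\mathrm{std}}_{\tau}$ denote the solution of the classical (local) LQ Riccati equation with the frozen coefficients $Q(\tau;\cdot),R(\tau;\cdot),P(\tau)$ (and the same $B,D,D_0,C,F,F_0$); classical theory gives global existence and PSDness of $\Lambda^{\mathrm{std}}_{\tau}$ under \eqref{condition:pd}, and the monotonicity conditions \eqref{condition:monotonicity} enable a Gronwall-type comparison yielding $\Lambda(\tau;t)\leq \Lambda^{\mathrm{std}}_{\tau}(t)$. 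For the lower bound, $\Lambda(t;t)\geq 0$ follows from the variational interpretation of the equilibrium value: in the reduced problem obtained by setting $b_0,\vartheta,\vartheta_0,q,r,p,\overline{q},\overline{r},\overline{p}$ to zero (which leaves \eqref{riccati1} unchanged), one has $\kappa\equiv 0$ and, for any centered $\mu$, $V(t;t,\mu)=Var(\mu)(\Lambda(t;t))$; this quantity is nonnegative by Corollary \ref{corollary:verification} since the cost functional is then a sum of nonnegative quadratic forms under \eqref{condition:pd}, forcing $\Lambda(t;t)\geq 0$. These bounds keep $U(t;t)\geq \delta I$ uniformly on $[0,T]$, allowing global continuation and uniqueness of $\Lambda$. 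The analogous argument applied to \eqref{riccati2} — with $(B+\overline{B},D+\overline{D},D_0+\overline{D}_0,\ldots)$ replacing $(B,D,D_0,\ldots)$ and $(Q+\overline{Q},R+\overline{R},P+\overline{P})$ replacing $(Q,R,P)$, and with $\Lambda$ now entering as known data — yields $\beta$ globally with $\beta(t;t)\geq 0$, and hence $W(t;t)\geq \delta I$. Equations \eqref{linearode1} and \eqref{linearode2} are then handled by a direct contraction in $C([0,T];\R^d)$ for $\gamma$ (linear in $\gamma$, nonlocal only through $\gamma(t;t)$) and by direct integration for $\kappa$. The main obstacle throughout is the PSD/boundedness estimate on $\Lambda(t;t)$: because of an extra $\partial_\tau \Lambda(t;t)$ contribution, the diagonal $t\mapsto \Lambda(t;t)$ does not obey a classical Riccati equation, so its PSD property and boundedness cannot be read off from standard theory and rely essentially on the monotonicity assumptions \eqref{condition:monotonicity}.
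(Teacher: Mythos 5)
Your overall strategy is sound in spirit and genuinely different from the paper's. You exploit the same triangular structure (solve \eqref{riccati1}, then \eqref{riccati2}, then \eqref{linearode1}, then \eqref{linearode2}) and the same key linearization observation (given the diagonal $\Lambda(t;t)$, equation \eqref{riccati1} becomes a linear Lyapunov-type ODE in $\Lambda(\tau;\cdot)$ for each fixed $\tau$, since $M=\overline{M}=0$). Where you differ is in existence: you propose a direct fixed-point on the diagonal, $\Phi:\Theta\mapsto[t\mapsto\Lambda^\Theta(t;t)]$, combined with a priori bounds to continue globally, whereas the paper constructs the solution via a time discretization (alternating finite families of local Riccati and Lyapunov equations on a partition $\Delta$, a uniform bound and equicontinuity, then an Arzel\`a--Ascoli passage to the limit). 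The paper does use your fixed-point idea, but only for \emph{uniqueness}, where the global bound is already available.

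However, both of your a priori bounds have gaps. For the \textbf{upper bound}, the claimed comparison $\Lambda(\tau;t)\le\Lambda^{\mathrm{std}}_\tau(t)$ with the classical Riccati solution is in the \emph{wrong direction}. The nonlocal equation uses the feedback gain $U(t;t)^{-1}S^\top(t;t)$ computed at the diagonal, which is suboptimal for the frozen $\tau$-problem; the corresponding quadratic form is therefore \emph{at least} the minimizing one, yielding $\Lambda(\tau;t)\ge\Lambda^{\mathrm{std}}_\tau(t)$, which gives you nothing. The paper instead dominates the approximations $\hat{\Lambda}^\Delta$ by the solution $\Pi^\Delta$ of the \emph{no-feedback Lyapunov equation} (obtained by dropping the negative quadratic term), which is provably an upper bound by the Riccati--Lyapunov comparison (Proposition 2.3 of Yong 2017), and then invokes Gronwall only on $\Pi^\Delta$. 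You should replace your $\Lambda^{\mathrm{std}}_\tau$ by this Lyapunov solution. For the \textbf{lower bound}, appealing to the verification Corollary \ref{corollary:verification} to get $\Lambda(t;t)\ge0$ is at best a delicate bootstrap and at worst circular: the corollary presupposes a solution to the master system with $U(t;t),W(t;t)\succ0$, which is exactly what you are trying to establish. In the paper, nonnegativity is obtained directly at the ODE level, by observing that each $\tilde{\Lambda}_l$ solves a Lyapunov equation with nonnegative inhomogeneous data and nonnegative terminal datum, hence is PSD; and the chain $0\le\tilde{\Lambda}_l\le\tilde{\Lambda}_{l+1}\le\cdots\le\hat{\Lambda}^\Delta$ rests crucially on the monotonicity conditions \eqref{condition:monotonicity}, whose role in your argument is only asserted, not used. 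In short: your decomposition, your fixed-point map, and your short-horizon contraction are fine, but the global continuation hinges on a priori bounds, and the ones you give do not hold as stated; the paper's Lyapunov-comparison and discretized-monotonicity machinery is precisely what you would need to supply.
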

  \begin{proof}
  	 The proof is reported in Subsection \ref{appendixA}.
  \end{proof}
  \begin{remark}
 We give some comments on these conditions. Condition \eqref{condition:pd} is commonly found in the literature on linear-quadratic control problems; see, e.g., \cite{Yong2017}, \cite{Yong2012}, and \cite{Yong2013}, primarily to ensure the well-posedness of the related Riccati equations.  We point out that if
  \begin{equation*}
  	\begin{cases}
  		Q(\tau;t)=\lambda(t-\tau)Q(t),\quad \overline{Q}(\tau;t)=\lambda(t-\tau)\overline{Q}(t),\\
  		R(\tau;t)=\lambda(t-\tau)R(t),\quad\overline{R}(\tau;t)=\lambda(t-\tau)\overline{R}(t),\quad 0\leq\tau\leq t\leq T,\\
  		P(\tau)=\lambda(T-\tau)P,\quad\overline{P}(\tau)=\lambda(T-\tau)\overline{P}, 
  	\end{cases}
  	  \end{equation*} 
  	 $\lambda$ is a positive, continuous and decreasing function defined on $[0,T]$  with  
  	\begin{equation*}
  		\begin{cases}
  			Q(t),\ Q(t)+\overline{Q}(t)\geq0,\quad R(t), R(t)+\overline{R}(t)\geq0,\, t\in[0,T],\\
  			P, P+\overline{P}\geq0,
  		\end{cases}
  	\end{equation*}
then the monotonicity conditions \eqref{condition:monotonicity} hold.
\end{remark}

{
 \textbf{Example-1: Conditional  mean-variance portfolio selection under non-exponential discount}
\vskip 5pt
As an example in financial application, we consider a conditional mean-variance problem under non-exponential discounting characterized by the following running objective functional:
\begin{equation*}
 	\begin{split}
 		J(t_{0};t_{0},\xi;\alpha) &:= \mathbb{E}^0 \bigg[\int_{t_0}^T \lambda_1(s - t_0) \left[\frac{\eta}{2} \operatorname{Var}(X_s | W^0) - \mathbb{E}\left[X_s | W^0\right] \right] ds \\
 		&\quad + \lambda_2(T - t_{0}) \left[\frac{\eta}{2} \operatorname{Var}(X_T | W^0) - \mathbb{E}\left[X_T | W^0\right] \right] \bigg],
 	\end{split}
 \end{equation*}
where \(\eta > 0\), and \(\lambda_1, \lambda_2\) are positive continuous discount functions defined on \([0,T]\) with \(\lambda_i(0) = 1\) for \(i = 1,2\). Let $\alpha_{t}$, valued in $\R$, denote the amount invested in the risky asset. The self-financing wealth process $X=(X_{t}^{\alpha})_{t_{0}\leq t\leq T}$ is given  by
 \begin{equation*}
 	dX_{t}=r(t)X_{t}dt+\alpha_{t}\{\rho(t)dt+\theta(t)dB_{t}+\theta_0(t)dW^0_t\},\quad X_{t_{0}}=\xi\in L^{2}(\Gc;\R),
 \end{equation*}
where $r$ is the interest rate, $\rho$, $\theta>0$ and $\theta_0>0$ are the excessive rate of return and the volatility of the idiosyncratic noise and common noise. All deterministic coefficient functions are assumed to be continuous. This model fits into the LQ
framework of the time-inconsistent MFC problem, with a linear dynamics that
 \begin{equation*}
 	\begin{split}
 		&b_{0}=0,\, B(t)=r(t),\, \overline{B}=0,\, C(t)=\rho(t), \overline{C}=0,\\
 		&\vartheta=D=\overline{D}=0,\, F(t)=\theta(t),\, \overline{F}=0,\\
 		&\vartheta_0=D_0=\overline{D}_0=0,\, F_0(t)=\theta_0(t),\, \overline{F}_0=0,\\
 		&Q(\tau;t)=\frac{\eta}{2}\lambda_1(t-\tau),\,\overline{Q}(\tau;t)=-\frac{\eta}{2}\lambda_1(t-\tau),\,M=\overline{M}=R=\overline{R}=q=r=\overline{r}=0,\\
 		&\overline{q}(\tau;t)=-\lambda_1(t-\tau),\, P(\tau)=\frac{\eta}{2}\lambda_2(T-\tau),\,\overline{P}(\tau)=-\frac{\eta}{2}\lambda_2(T-\tau),\,p(\tau)=0,\,\overline{p}(\tau)=-\lambda_2(T-\tau).
 	\end{split}
 \end{equation*}
 The  system \eqref{riccati1}-\eqref{linearode2} for $\left(\Lambda,\beta,\gamma,\kappa\right)$ in this case can be simplified as
 \begin{equation*}
 	\begin{cases}
 		\Lambda'(\tau;t)+\frac{\eta}{2}\lambda_{1}(t-\tau)-(\frac{\rho^{2}(t)}{\theta^{2}(t)+\theta^2_0(t)}-2r(t))\Lambda(\tau;t)=0,\quad \Lambda(\tau;T)=\frac{\eta}{2}\lambda_2(T-\tau),\\
 		\beta'(\tau;t)+2r(t)\beta(\tau;t)+K_1(t,\tau)\beta^{2}(t;t)-2K_2(t,\tau)\beta(t;t)\beta(\tau;t)=0,\quad \beta(\tau;T)=0,\\
 		\gamma'(\tau;t)-\lambda_1(t-\tau)+r(t)\gamma(\tau;t)+K_1(t,\tau)\beta(t;t)\gamma(t;t)-K_2(t,\tau)(\beta(\tau;t)\gamma(t;t)+\beta(t;t)\gamma(\tau;t))=0,\\
   
 		\gamma(\tau;T)=-\lambda_2(T-\tau),\\
   
 		\kappa'(\tau;t)+\frac{1}{4}K_1(t,\tau)\gamma^{2}(t;t)-\frac{1}{2}K_2(t,\tau)\gamma(t;t)\gamma(\tau;t)=0,\quad \kappa(\tau;T)=0
 	\end{cases}
 \end{equation*}
with
\begin{align*}
K_1(t,\tau):=\frac{\rho^{2}(t)(\theta^{2}(t)\Lambda(\tau;t)+\theta_0^2(t)\beta(\tau;t))}{(\theta^{2}(t)\Lambda(t;t)+\theta_0^2(t)\beta(t;t))^2},\quad\quad
K_2(t,\tau):=\frac{\rho^{2}(t)}{\theta^{2}(t)\Lambda(t;t)+\theta_0^2(t)\beta(t;t)},
\end{align*}
which can be solved explicitly by 
 \begin{equation*}
 	\begin{cases}
 		\Lambda(\tau;t)=\frac{\eta}{2}\bigg[\lambda_2(T-\tau)\exp\left(\int_{t}^{T}2r(s)-\frac{\rho^{2}(s)}{\theta^{2}(s)+\theta_0^2(s)}ds\right)+\int_{t}^T\lambda_1(s-\tau)\exp\left(\int_{t}^{s}2r(v)-\frac{\rho^{2}(v)}{\theta^{2}(v)+\theta_0^2(v)}dv\right)ds\bigg],\\
 		\beta(\tau;t)=0,\\
 		\gamma(\tau;t)=-\lambda_2(T-\tau)\exp\left(\int_{t}^{T}r(s)ds\right)-\int_t^T\lambda_1(s-\tau)\exp\left(\int_{t}^{s}r(v)dv\right)ds,\\
 		\kappa(\tau;t)=\int_{t}^{T}\frac{1}{4}K_1(s,\tau)\gamma^2(s;s)-\frac{1}{2}K_2(s,\tau)\gamma(s;s)\gamma(\tau;s)ds.
 	\end{cases}
 \end{equation*}
 In addition, $(U,W)$ are explicitly given by $U(t;t)=(\theta(t)^2+\theta_0^2(t))\Lambda(t;t)>0$ and $W(t;t)=\theta^{2}(t)\Lambda(t;t)>0$. Therefore, the feedback function of the closed-loop equilibrium strategy is 
 {\small\begin{align*}
 \hat{\alpha}(t,x,\mu)&=-\frac{\rho(t)}{\theta^{2}(t)+\theta_0^2(t)}(x-\overline{\mu})\\&+\frac{\rho(t)}{\eta\theta^{2}(t)}\frac{\lambda_2(T-t)\exp\left(\int_{t}^{T}r(s)ds\right)+\int_t^T\lambda_1(s-t)\exp\left(\int_{t}^{s}r(v)dv\right)ds}{\bigg[\lambda_2(T-t)\exp\left(\int_{t}^{T}2r(s)-\frac{\rho^{2}(s)}{\theta^{2}(s)+\theta_0^2(s)}ds\right)+\int_{t}^T\lambda_1(s-t)\exp\left(\int_{t}^{s}2r(v)-\frac{\rho^{2}(v)}{\theta^{2}(v)+\theta_0^2(v)}dv\right)ds\bigg]}.
 \end{align*}}
Notably, the obtained equilibrium strategy is linear in terms of the wealth variable $x$. Moreover, in the scenario when $x<\bar{\mu}$ that the current wealth performance is lower than the conditional mean, the larger volatility $\theta_0$ of common noise leads to lower equilibrium portfolio in the risky asset. }
\vskip 5pt
 \textbf{Example-2: Inter-bank systemic risk model with non-exponential discount}
 
 We consider an inter-bank systemic risk model where the log-monetary reserve of
 the population in the limiting model is governed by the McKean-Vlasov SDE
 \begin{equation}
 	dX_{t}=[k(\E[X_{t}|W^0]-X_{t})+\alpha_{t}]dt+\sigma\sqrt{1-\rho^2} dB_{t}+\sigma\rho dW^0_t,\quad X_{t_{0}}=\xi\in L^{2}(\Gc;\R).
 \end{equation}
Here, $k\geq0$ is the rate of mean-reversion in the interaction from borrowing and lending between the banks, and $\sigma>0$ is the constant volatility coefficient of the bank reserve, and $W^0$ is the common noise for all banks. Moreover, the representative bank at time $t_{0}$ can control
the rate of borrowing/lending to a central bank with the policy $\alpha$ in order to minimize the cost functional:
\begin{equation*}
\E\left[\int_{t_{0}}^{T}\lambda(t-t_{0})\left(\frac{1}{2}\alpha^{2}_{t}-q\alpha_{t}(\E[X_{t}|W^0]-X_{t})+\frac{\eta}{2}(\E[X_{t}|W^0]-X_{t})^{2}\right)dt+\frac{c}{2}\lambda(T-t_{0})(\E[X_{T}|W^0]-X_{T})^{2}\right],
\end{equation*}
 where $\lambda$ is  a positive, non-decreasing, continuously differentiable function on $[0,T]$ with $\lambda(0)=1$, $q>0$ is a positive parameter for the incentive in borrowing $(\alpha_{t}>0)$ or lending $(\alpha_{t}<0)$, and $\eta>0, c>0$
 are positive parameters for penalizing the departure from the average. This model fits into the LQ framework with $d=m=1$ and 
 \begin{equation*}
 	\begin{split}
 		&b_{0}=0,\, B=-k,\, \overline{B}=k,\, C=1,\, \overline{C}=0,\\
 		&\vartheta=\sigma\sqrt{1-\rho^2}, \, D=\overline{D}=F=\overline{F}=0,\\
 		&\vartheta_0=\sigma\rho, \, D_0=\overline{D}_0=F_0=\overline{F}_0=0,\\
 		&Q(\tau;t)=\frac{\eta}{2}\lambda(t-\tau),\, \overline{Q}(\tau;t)=-\frac{\eta}{2}\lambda(t-\tau),\, R(\tau;t)=\frac{1}{2}\lambda(t-\tau),\,  \overline{R}=0, \\
 		&M(\tau;t)=\frac{q}{2}\lambda(t-\tau),\, \overline{M}(\tau;t)=-\frac{q}{2}\lambda(t-\tau),\\
 		&q=\overline{q}=r=\overline{r}=0, P(\tau)=\frac{c}{2}\lambda(T-\tau),\,\overline{P}(\tau)=-\frac{c}{2}\lambda(T-\tau),\,p=\overline{p}=0.
 	\end{split}
 \end{equation*}
 Note that we cannot apply Proposition \ref{Pro:riccati} here as $M$ and $\overline{M}$ are not $0$. 
 The  system \eqref{riccati1}-\eqref{linearode2} for $\left(\Lambda,\beta,\gamma,\kappa\right)$ can be written in this case as follows:
 \begin{equation*}
 	\begin{cases}
 		\Lambda'(\tau;t)-2k\Lambda(\tau;t)+2\lambda(t-\tau)(\Lambda(t;t)+\frac{q}{2})^{2}-4(\Lambda(\tau;t)+\frac{q}{2}\lambda(t-\tau))(\Lambda(t;t)+\frac{q}{2})\\\quad\quad\quad+\frac{\eta}{2}\lambda(t-\tau)=0,\quad \Lambda(\tau;T)=\frac{c}{2}\lambda(T-\tau),\\
 		\beta'(\tau;t)+2\lambda(t-\tau)\beta^{2}(t;t)-4\beta(\tau;t)\beta(t;t)=0,\quad \beta(\tau;T)=0,\\
 		\gamma'(\tau;t)+2\lambda(t-\tau)\beta(t;t)\gamma(t;t)-2\beta(\tau;t)\gamma(t;t)-2\beta(t;t)\gamma(\tau;t)=0,\,
 		\gamma(\tau;T)=0,\\
 		\kappa'(\tau;t)+\sigma^{2}(1-\rho^2)\Lambda(\tau;t)+\sigma^2\rho^2\gamma(\tau;t)+\frac{1}{2}\lambda(t-\tau)\gamma^{2}(t;t)-\gamma(\tau;t)\gamma(t;t)=0,\, \kappa(\tau;T)=0.
 	\end{cases}
 \end{equation*}
One can see that $\beta=\gamma=0$ and if the first equation of the above system admits a solution $\Lambda$, then $\kappa(\tau;t)=\sigma^{2}(1-\rho^2)\int_{t}^{T}\Lambda(\tau;s)ds$.
 Moreover,  $(U,W)$ are explicitly give by $U(t;t)=W(t;t)=\frac{1}{2}>0$. Therefore, the equilibrium control is given in feedback form by 
 \begin{equation*}
 	\hat{\alpha}_{t}=\hat{\alpha}(t,\hat{X}_{t},\P^{W^0}_{\hat{X}_{t}})=-2(\Lambda(t;t)+\frac{q}{2})(\hat{X}_{t}-\E\left[\hat{X}_{t}|W^0\right]).
 \end{equation*}
 It remains to show the following equation admits a unique solution
 \begin{equation}\label{ricattiexample}
 	\begin{cases}
 		\Lambda'(\tau;t)-2k\Lambda(\tau;t)+2\lambda(t-\tau)(\Lambda(t;t)+\frac{q}{2})^{2}-4(\Lambda(\tau;t)+\frac{q}{2}\lambda(t-\tau))(\Lambda(t;t)+\frac{q}{2})+\frac{\eta}{2}\lambda(t-\tau)=0,\\
   \Lambda(\tau;T)=\frac{c}{2}\lambda(T-\tau),
 \end{cases}
 \end{equation}
 which is guaranteed by the next lemma.
\begin{lemma}\label{lemma:wellposedness:ricattiexample}
	If $k>\max(4C^{2},1)$, i.e., the mean-reversion rate is sufficiently large,
  Eq.\eqref{ricattiexample} admits a unique solution, where $C$ is a constant depending on the discount $\lambda$, parameters $c,q$ and $\eta$.
\end{lemma}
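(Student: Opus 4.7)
The plan is to convert the non-local Riccati equation \eqref{ricattiexample} into a fixed-point equation for its diagonal trace $\mu(t) := \Lambda(t;t) + q/2$. Once $\mu$ is regarded as known, \eqref{ricattiexample} becomes, for each fixed $\tau$, a \emph{linear} first-order ODE in $t$ with coefficient $-(2k + 4\mu(t))$ in front of $\Lambda$ and forcing $\lambda(t-\tau)\bigl[2\mu(t)^2 - 2q\mu(t) + \eta/2\bigr]$, subject to $\Lambda(\tau;T) = \tfrac{c}{2}\lambda(T-\tau)$. Variation of constants produces an explicit closed-form expression $\Lambda_\mu(\tau;t)$, and setting $\mathcal{T}\mu(t) := \Lambda_\mu(t;t) + q/2$ turns \eqref{ricattiexample} into the fixed-point equation $\mathcal{T}\mu = \mu$ on $C([0,T];\R)$. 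Solving the lemma thus reduces to producing a unique fixed point of $\mathcal{T}$, which I would obtain from Banach's contraction theorem.

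The first step is a stability estimate. Using $\|\lambda\|_\infty < \infty$ and assuming $\|\mu\|_\infty \le C_0$ for some constant $C_0$ to be chosen, the large negative coefficient $-(2k+4\mu(t))$ makes the exponential factor in the variation-of-constants formula decay like $e^{-2(k-2C_0)(T-t)}$, and direct estimates yield a bound of the form
\[
\|\mathcal{T}\mu\|_\infty \;\le\; \tfrac{q}{2} + \|\lambda\|_\infty\!\left[\tfrac{c}{2} \;+\; \frac{2C_0^2 + 2qC_0 + \eta/2}{2(k-2C_0)}\right].
\]
The idea is to fix a constant $C = C(\lambda,c,q,\eta)$ large enough that, under the assumption $k > 4C^2$, the right-hand side does not exceed $C$; the closed ball $\mathcal{B}_C := \{\mu \in C([0,T];\R) : \|\mu\|_\infty \le C\}$ is then stable under $\mathcal{T}$.

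For the contraction step, I would take $\mu_1, \mu_2 \in \mathcal{B}_C$ and decompose $\mathcal{T}\mu_1 - \mathcal{T}\mu_2$ into the contributions coming from differences in the exponential factor and in the quadratic forcing. A first-order expansion of $x \mapsto e^{-x}$ between the two exponents, combined with the Lipschitz bound on $\mu \mapsto 2\mu^2 - 2q\mu + \eta/2$ on $\mathcal{B}_C$, produces an estimate
\[
\|\mathcal{T}\mu_1 - \mathcal{T}\mu_2\|_\infty \;\le\; \frac{K(C,\lambda,c,q,\eta)}{k}\,\|\mu_1 - \mu_2\|_\infty,
\]
which is strictly contractive once $k$ exceeds some threshold depending on the same parameters. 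Absorbing this threshold and the stability threshold into the single condition $k > \max(4C^2, 1)$ (enlarging $C$ if necessary) matches the hypothesis; Banach's theorem then produces a unique $\mu^\star \in \mathcal{B}_C$, and $\Lambda(\tau;t) := \Lambda_{\mu^\star}(\tau;t)$ is the desired unique solution of \eqref{ricattiexample}.

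The principal technical obstacle is purely quantitative: the large-$k$ damping in the exponential must dominate both the quadratic growth of the forcing $2\mu^2 - 2q\mu + \eta/2$ and the amplification produced when the exponent itself is differentiated in $\mu$, uniformly on $\mathcal{B}_C$. Calibrating $C$ against $\lambda, c, q, \eta$ so that these two competing estimates are consistent, and in particular so that a \emph{single} threshold of the form $k > \max(4C^2, 1)$ suffices simultaneously for stability and contraction, is where the bookkeeping lies, but no new analytical idea is required beyond variation of constants and Banach's fixed-point theorem.
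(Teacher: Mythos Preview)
Your approach is correct and genuinely different from the paper's. The paper first shifts to $\tilde\Lambda(\tau;t)=\Lambda(\tau;t)+\tfrac{q}{2}\lambda(t-\tau)$ and then runs a fixed-point argument on the \emph{full} two-variable function $v\in C(\Delta[0,T];\R)$: given $v$, it freezes both $v(t;t)$ and $v(\tau;t)$ in the nonlinear terms while keeping only the $-2k\tilde\Lambda$ piece linear, proves an invariant ball via a complete-the-square identity integrated against $e^{-2k(T-t)}$, and then establishes a \emph{local} contraction (Lipschitz constant $K\delta$) on trapezoidal slabs $\T[T-\delta,T]$, which it patches across $[0,T]$. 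You instead exploit the special structure that the only non-local coupling is through the diagonal $\Lambda(t;t)$: once $\mu(t)=\Lambda(t;t)+q/2$ is frozen, the equation is genuinely linear in $\Lambda(\tau;\cdot)$ with coefficient $-(2k+4\mu(t))$, so you can solve it explicitly and close the fixed point on the one-dimensional space $C([0,T];\R)$, with a \emph{global} contraction coming directly from the large-$k$ exponential decay.

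Your route is more economical for this particular equation (smaller state space, no slab-patching), while the paper's is more template-like for general non-local Riccati systems where the coupling is not confined to the diagonal. Two small points to tighten: (i) your Lipschitz constant is really $K(C)/(k-2C)$ rather than $K/k$, so the quantitative claim that $k>4C^2$ alone suffices needs the observation that $k-2C\gtrsim C^2$ and that $K(C)=O(C^2)$, which you gesture at but should make explicit; (ii) both your argument and the paper's yield uniqueness only within the invariant ball, which is consistent with how the lemma is stated but worth noting.
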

\begin{proof}
	The proof is delegated in Subsection \ref{appendixB}.
\end{proof}

{\section{A Class of Time-inconsistent Non-LQ Extended MFC Problems}\label{sect:nonlq}
For general MFC problems beyond the LQ-type, it is a well-known challenge to deduce the desired regularity of the value function even in a time-consistent framework. We examine in this section a class of time-inconsistent non-LQ extended MFC problem with general distribution-dependent dynamics. To ease the presentation, we restrict our analysis to the one-dimensional setting in which we consider a McKean–Vlasov dynamics governed by the coefficients  
\begin{align*}
    &b(t,x,\mu,a,\lambda) =  b\left(\overline{\mu}\right) + Ca + \overline{C} \overline{\lambda},\\ 
    &\sigma(t,x,\mu,a,\lambda) = \sigma, \quad \sigma_0(t,x,\mu,a,\lambda) = \sigma_0,
\end{align*}  
for \((t,x,\mu,a,\lambda) \in [0,T] \times \mathbb{R} \times \mathcal{P}_2(\mathbb{R}) \times \mathbb{R} \times \mathcal{P}_2(\mathbb{R})\), where \(\overline{\mu} := \int_{\mathbb{R}} x \, \mu(dx)\) and \(\overline{\lambda} := \int_\mathbb{R} a \, \lambda(da)\). Here,  \( C \), \( \overline{C} \), \( \sigma > 0 \), and \( \sigma_0 > 0 \) are constants, while \( b: \mathbb{R} \to \mathbb{R} \) is a Lipschitz continuous function.  The associated cost functions are given by  
\begin{equation*}
 	\begin{split}	
    &f(\tau;t,x,\mu,a,\lambda) =  \overline{Q}(\tau;t)q(\overline{\mu}) + R(\tau;t)a^2 + \overline{R}(\tau;t)r(\overline{\lambda}),\\
    &g(\tau;x,\mu) = \overline{P}(\tau)p(\overline{\mu}),
 	\end{split}
 \end{equation*}  
where \( \overline{Q}, R, \overline{R}: \Delta[0,T] \to \mathbb{R} \) are continuous functions, \( \overline{P}: [0,T] \to \mathbb{R} \) is also a continuous function, \( q, p: \mathbb{R} \to \mathbb{R} \) are bounded continuous functions, and \( r: \mathbb{R} \to \mathbb{R} \) is a convex and differentiable function. Some additional assumptions will be specified later. Then, the functions \(\hat{f}\) and \(\hat{g}\), as defined in \eqref{hat_fg}, are given by  
\begin{equation*}
 	\begin{cases}
 	\hat{f}(\tau;t,\mu,\talpha) = \overline{Q}(\tau;t) q(\overline{\mu}) + R(\tau;t) \int_{\mathbb{R}} \tilde{\alpha}^2(x) \, d\mu(x) + \overline{R}(\tau;t) r\left(\int_{\mathbb{R}} \talpha(x) \, d\mu(x)\right), \\  
 	\hat{g}(\tau;\mu) =  \overline{P}(\tau) p(\overline{\mu}),
 	\end{cases}
\end{equation*}  
for any \((\tau,t,\mu) \in \Delta[0,T] \times \mathcal{P}_2(\mathbb{R})\) and \(\talpha \in L(\mathbb{R};\R)\). Similarly, we assume that \( \text{Lip}(\mathbb{R};\mathbb{R}) \subseteq \mathcal{D} \). It is important to note that the problem considered in this section does not fit the LQ type MFC as the dependence of mean-field term \(\mu\) in
both the state dynamics and the cost functional can be generally nonlinear and nonconvex.

In order to apply Theorem \ref{corollary:verification} to confirm the existence of time-consistent equilibria, we aim to show that the system \eqref{masterequation:valuefunction}–\eqref{masterequation:equilibrium} admits a solution satisfying the regularity conditions in Assumption \ref{assumption:V}. In the model of this section, the system can be written as
 \begin{equation}\label{newsect:masterequation}
 \begin{split}
 &\partial_{t}V(\tau;t,\mu)+\int_{\R}\partial_{\mu}V(\tau;t,\mu)(x)\mu(dx)b(\overline{\mu})+C\int_{\R}\partial_{\mu}V(\tau;t,\mu)(x)\hat{\alpha}(t,x,\mu)\mu(dx)\\
 &\qquad\qquad\quad+\overline{C}\int_{\R}\partial_{\mu}V(\tau;t,\mu)(x)\mu(dx)\int_{\R}\hat{\alpha}(t,x,\mu)d\mu(x)\\
 &\qquad\qquad\quad+\frac{\sigma^2+\sigma_0^2}{2}\int_{\R}\partial_{x}\partial_{\mu}V(\tau;t,\mu)(x)\mu(dx)+\frac{\sigma_0^2}{2}\int_{\R^2}\partial^2_{\mu\mu}V(\tau;t,\mu)(x,x')\mu(dx)\mu(dx')\\
 &\qquad\qquad\quad + \overline{Q}(\tau;t) q(\overline{\mu})+ R(\tau;t) \int_{\mathbb{R}} \hat{\alpha}^2(t,x,\mu) \, d\mu(x)\\&\qquad\qquad\quad  + \overline{R}(\tau;t) r\left(\int_{\mathbb{R}} \hat{\alpha}(t,x,\mu) \, d\mu(x)\right)=0,\quad 0\leq\tau\leq t <T,\ \mu\in\Pc_{2}(\R),\\
    &V(\tau;T,\mu)=  \overline{P}(\tau) p(\overline{\mu}),\quad \tau\in [0,T],\ \mu\in\Pc_{2}(\R)
    \end{split}
 \end{equation}
where \(\hat{\alpha}(t,\cdot,\mu)\) (abbreviated as \(\hat{\alpha}\)) is the minimizer of the functional \( F^{\mu}_{t}(\cdot):L^{2}_{\mu}(\R)\supset Lip(\R;\R)\rightarrow\R\), given by  
\begin{align*}  
    F^{\mu}_{t}(\nu) &:=  C\int_{\R}\partial_{\mu}V(t;t,\mu)(x)\nu(x)\mu(dx)+\overline{C}\int_{\R}\partial_{\mu}V(t;t,\mu)(x)\mu(dx)\int_{\R}\nu(x)d\mu(x)\\&+R(t;t) \int_{\mathbb{R}} \nu^2(x) \, d\mu(x)  + \overline{R}(t;t) r\left(\int_{\mathbb{R}} \nu(x) \, d\mu(x)\right).
\end{align*}  
After some direct calculations, the Gâteaux derivative of \( F^{\mu}_{t} \) at \( \hat{\alpha} \) in the direction \( \theta \in L^{2}_{\mu}(\mathbb{R}) \) is given by
  \begin{equation*}	DF^{\mu}_{t}(\hat{\alpha},\theta):=\lim\limits_{\epsilon\rightarrow0}\frac{F^{\mu}_{t}(\hat{\alpha}+\epsilon\theta)-F^{\mu}_{t}(\hat{\alpha})}{\epsilon}=\int_{\R}\dot{f}^{\mu}_{t}(x,\hat{\alpha})\theta(x)\mu(dx)
  \end{equation*}
  with
  \begin{equation*}
  	\dot{f}^{\mu}_{t}(x,\hat{\alpha})=C\partial_{\mu}V(t;t,\mu)(x)+\overline{C}\int_{\R}\partial_{\mu}V(t;t,\mu)(x)\mu(dx)+2R(t;t)\hat{\alpha}(x)+\overline{R}(t;t)r'\left(\int_{\mathbb{R}} \hat{\alpha}(x) \, d\mu(x)\right).
  \end{equation*}
In the same fashion to analyze the LQ problem in Section \ref{sec:app}, we deduce that the functional \( F^{\mu}_{t} \) attains its infimum at some \( \hat{\alpha} \) such that \( DF^{\mu}_{t}(\hat{\alpha}, \cdot) \) vanishes, leading to
\begin{equation}\label{newsect:alpha}
    C\partial_{\mu}V(t;t,\mu)(x)+\overline{C}\int_{\R}\partial_{\mu}V(t;t,\mu)(x)\mu(dx)+2R(t;t)\hat{\alpha}(x)+\overline{R}(t;t)r'\left(\int_{\mathbb{R}} \hat{\alpha}(x) \, d\mu(x)\right)=0.
\end{equation}
Integrating both sides of the above equation with respect to \(\mu\) yields
\begin{equation*}
    R(t;t)\int_{\mathbb{R}} \hat{\alpha}(x) \, d\mu(x)+\frac{1}{2}\overline{R}(t;t)r'\left(\int_{\mathbb{R}} \hat{\alpha}(x) \, d\mu(x)\right)+\frac{C+\overline{C}}{2}\int_{\R}\partial_{\mu}V(t;t,\mu)(x)\mu(dx)=0.
\end{equation*}
To derive an explicit expression for \( \int_{\mathbb{R}} \hat{\alpha}(x) \, d\mu(x) \), we impose the following assumption.  {\begin{assumption} \label{newassump:implicit}  
\begin{itemize}
    \item[(i)] The functions \( R(\cdot;\cdot) \) and \( \overline{R}(\cdot;\cdot) \) are constant along the diagonal, i.e., there exist two constants \( R \) and \( \overline{R} \) such that  
    \[
    R := R(t;t), \quad \overline{R} := \overline{R}(t;t), \quad \forall t \in [0,T].
    \]
    \item[(ii)] The function \( r \) is twice continuously differentiable. Additionally, there exists a positive constant \( \epsilon_0 > 0 \) such that  
    \[
    \inf_{x\in\R} \left| R + \frac{1}{2} \overline{R} r''(x) \right| \geq \epsilon_0.
    \]
Furthermore, for any $y \in \mathbb{R}$, there exists a unique $z \in \mathbb{R}$ satisfying
$$
R z + \frac{1}{2} \overline{R} \, r'(z) + (C + \overline{C}) y = 0.
$$
\end{itemize}
\end{assumption}  
Condition (i) in Assumption \ref{newassump:implicit} is introduced for simplicity and can be generalized, though at the cost of more involved arguments.  While somewhat restrictive, it covers cases of time inconsistency due to non-exponential discounting, such as when \( R(\tau;t) = R \lambda(t - \tau) \) and \( \overline{R}(\tau;t) = \overline{R} \lambda(t - \tau) \) for some positive continuous function \( \lambda: \mathbb{R}^+ \to \mathbb{R} \).  Under condition (ii) in Assumption \ref{newassump:implicit}, the implicit function theorem guarantees the existence of a function \( \rho \in C^1(\mathbb{R}) \) satisfying for all \( y \in \mathbb{R} \):
\begin{equation}\label{newsect:rhoimplict}
    R \rho(y) + \frac{1}{2} \overline{R} r'(\rho(y)) + (C + \overline{C}) y = 0.
\end{equation}
Moreover, it follows that  
\begin{equation}\label{newsect:rho'}
    \rho'(y)=-\frac{C+\overline{C}}{R+\frac{1}{2}\overline{R}r''(\rho(y))}
\end{equation}
which implies
\begin{equation}\label{newsect:rhobound}
    |\rho'|^0 := \sup_{y \in \mathbb{R}} |\rho'(y)| \leq \frac{|C + \overline{C}|}{\epsilon_0}.
\end{equation}
} Consequently, we obtain  
{\small\begin{equation} \label{newsect:alpha2}  
\begin{split}
    \int_{\mathbb{R}} \hat{\alpha}(t,x,\mu) d\mu(x) &= \rho \left( \frac{1}{2} \int_{\mathbb{R}} \partial_{\mu} V(t;t,\mu)(x) \mu(dx) \right)\\&=-\frac{1}{2R}\left[\overline{R}r'\left(\rho \left(  \frac{1}{2} \int_{\mathbb{R}} \partial_{\mu} V(t;t,\mu)(x) \mu(dx) \right)\right)+(C+\overline{C})\int_{\R}\partial_{\mu}V(t;t,\mu)(x)\mu(dx)\right].
\end{split}
\end{equation}  }
Using \eqref{newsect:alpha}, we get the explicit form  
\begin{align} \label{newsect:alpha3}  
    \hat{\alpha}(t,x,\mu) &= -\frac{1}{2R} \bigg[ C \partial_{\mu} V(t;t,\mu)(x) + \overline{C} \int_{\mathbb{R}} \partial_{\mu} V(t;t,\mu)(x) \mu(dx) \\  
    &\quad + \overline{R} r' \left( \rho \left( \frac{1}{2} \int_{\mathbb{R}} \partial_{\mu} V(t;t,\mu)(x) \mu(dx) \right) \right) \bigg]. \nonumber  
\end{align}
\begin{remark}  \label{newsect:remark}
Here, we present some examples of \( r \) that satisfy Assumption \ref{newassump:implicit} and yield an explicit form of \( \rho \):
\begin{itemize}
\item If \( r(x) = x \), then it follows directly that  
\[
\rho(x) = -\frac{1}{R} \left( \frac{1}{2} \overline{R} + (C + \overline{C}) x \right).
\]  
\item If \( r(x) = x^2 \), then we obtain  
\[
\rho(x) = -\frac{C + \overline{C}}{R + \overline{R}} x.
\]  
\end{itemize}
\end{remark}
Utilizing the expressions in \eqref{newsect:alpha2} and \eqref{newsect:alpha3}, the equation \eqref{newsect:masterequation} can be further simplified as
 \begin{equation}\label{newsect:masterequation2}
 \begin{split}
 &\partial_{t}V(\tau;t,\mu)+\int_{\R}\partial_{\mu}V(\tau;t,\mu)(x)\mu(dx)b(\overline{\mu})\\&\qquad\qquad\quad+(C+\overline{C})\int_{\R}\partial_{\mu}V(\tau;t,\mu)(x)\mu(dx)\rho \left(  \frac{1}{2} \int_{\mathbb{R}} \partial_{\mu} V(t;t,\mu)(x) \mu(dx) \right)\\
 &\qquad\qquad\quad-\frac{C^2}{2R}\int_{\R}\partial_{\mu}V(\tau;t,\mu)(x)\left(\partial_{\mu}V(t;t,\mu)(x)-\int_{\R}\partial_{\mu}V(t;t,\mu)(x')\mu(dx')\right)\mu(dx)\\
 &\qquad\qquad\quad+\frac{\sigma^2+\sigma_0^2}{2}\int_{\R}\partial_{x}\partial_{\mu}V(\tau;t,\mu)(x)\mu(dx)+\frac{\sigma_0^2}{2}\int_{\R^2}\partial^2_{\mu\mu}V(\tau;t,\mu)(x,x')\mu(dx)\mu(dx')\\
 &\qquad\qquad\quad+ \overline{Q}(\tau;t) q(\overline{\mu})+ R(\tau;t)  \rho^2 \left(\frac{1}{2} \int_{\mathbb{R}} \partial_{\mu} V(t;t,\mu)(x) \mu(dx) \right)\\
 &\qquad\qquad\quad+\frac{R(\tau;t)}{4R^2}C^2\int_{\R}\partial_{\mu}V(t;t,\mu)(x)\left(\partial_{\mu}V(t;t,\mu)(x)-\int_{\R}\partial_{\mu}V(t;t,\mu)(x')\mu(dx')\right)\mu(dx)     \\
 &\qquad\qquad\quad  + \overline{R}(\tau;t) r\left(\rho \left( \frac{1}{2} \int_{\mathbb{R}} \partial_{\mu} V(t;t,\mu)(x) \mu(dx)\right)\right)=0,\quad 0\leq\tau\leq t <T,\ \mu\in\Pc_{2}(\R),\\
    &V(\tau;T,\mu)=  \overline{P}(\tau) p(\overline{\mu}),\quad \tau\in [0,T],\ \mu\in\Pc_{2}(\R).
    \end{split}
 \end{equation}
As a key step, we consider the ansatz for the solution \( V \) in the form of 
\begin{equation}\label{newsect:ansatz}
 	V(\tau;t,\mu) = \Phi(\tau;t,\overline{\mu}),
\end{equation}  
where  \( \Phi: \Delta[0,T] \times \mathbb{R} \to \mathbb{R} \) is the function to be determined. Substituting the ansatz \eqref{newsect:ansatz} into \eqref{newsect:masterequation2}, we deduce a nonlocal nonlinear PDE problem for the function \( \Phi \): 
\begin{equation}\label{newsect:nonlocalpde}
    \begin{split}
        &\partial_t\Phi(\tau;t,x)+\left[b(x)+(C+\overline{C})\rho\left(\frac{1}{2}\partial_x\Phi(t;t,x)\right)\right]\partial_x\Phi(\tau;t,x)+\frac{\sigma^2_0}{2}\partial_{xx}\Phi(\tau;t,x)\\
        &\quad\qquad\qquad+R(\tau;t)\rho^2\left(\frac{1}{2}\partial_x\Phi(t;t,x)\right)+\overline{R}(\tau;t)r\left(\rho\left(\frac{1}{2}\partial_x\Phi(t;t,x)\right)\right)\\&\qquad\qquad\quad+\overline{Q}(\tau;t)q(x)=0,\,0\leq\tau\leq t<T,\,\ x\in\R,\\
        &\Phi(\tau;T,x)=\overline{P}(\tau)p(x),\,(\tau,x)\in[0,T]\times\R.
    \end{split}
\end{equation}
Note that a similar type of nonlocal nonlinear PDE has been studied in \cite{lei_nonlocal_2023,lei_nonlocality_2024} in a different context. 
However, as stated in Theorem 3.5 of \cite{lei_nonlocality_2024}, under sufficiently smooth coefficients, only the existence of a short-time solution for small $T$ can be concluded therein. Although there are some other results on the global well-posedness of nonlocal nonlinear PDE problems (e.g., \cite{wei_time-inconsistent_2017,zhuchao2020,wang_backward_2019,lei_nonlocality_2024}), these studies often impose strong growth conditions on the nonlinear operator, which are not satisfied in our case. To reconcile the challenge in our model, we provide a new way to tackle the PDE problem by exploiting the special structure of \eqref{newsect:nonlocalpde}.

Let us first revisit some notations that are commonly adopted to study local parabolic equations such as \cite{ladyzenskaja_linear_nodate,wang_backward_2019}. For any suitable function \( \varphi : [S, T] \times \mathbb{R} \to \mathbb{R} \), with \( \alpha \in (0,1) \) and \( S \in [0, T) \), we define the following norms and seminorms:
\begin{align*}
&|\varphi|^{(0)} = \|\varphi\|_{L^{\infty}([S, T] \times \mathbb{R})}, \,\,
|\varphi|^{(1)} = |\varphi|^{(0)} + |\varphi_x|^{(0)}, \,\,
|\varphi|^{(2)} = |\varphi|^{(1)} + |\varphi_s|^{(0)} + |\varphi_{xx}|^{(0)},\\
&\langle \varphi \rangle_s^{\left(\frac{\alpha}{2}\right)} = \sup_{\substack{s, s' \in [S, T], x \in \mathbb{R} \\ s \neq s'}} \frac{|\varphi(s, x) - \varphi(s', x)|}{|s - s'|^{\frac{\alpha}{2}}}, \quad
\langle \varphi \rangle_x^{(\alpha)} = \sup_{\substack{s \in [S, T], x, x' \in \mathbb{R} \\ 0 < |x - x'| \leq 1}} \frac{|\varphi(s, x) - \varphi(s, x')|}{|x - x'|^\alpha}, \\
&\langle \varphi \rangle^{(\alpha)} = \langle \varphi \rangle_s^{\left(\frac{\alpha}{2}\right)} + \langle \varphi \rangle_x^{(\alpha)}, \quad
|\varphi|^{(\alpha)} = |\varphi|^{(0)} + \langle \varphi \rangle^{(\alpha)}, \\
&|\varphi|^{(1+\alpha)} = |\varphi|^{(1)} + \langle \varphi_x \rangle^{(\alpha)} + \langle \varphi \rangle_s^{\left(\frac{1 + \alpha}{2}\right)}, \\
&|\varphi|^{(2+\alpha)} = |\varphi|^{(2)} + \langle \varphi_s \rangle^{(\alpha)} + \langle \varphi_{xx} \rangle^{(\alpha)} + \langle \varphi_x \rangle_s^{\left(\frac{1 + \alpha}{2}\right)}.
\end{align*}
When \( [S, T] \times \mathbb{R} \) needs to be emphasized, we write \( |\varphi|_{[S, T] \times \mathbb{R}}^{(1)} \), etc. Additionally, whenever no confusion arises, we do not distinguish between \( |\varphi|_{[S, T] \times \mathbb{R}}^{(1)} \) and \( |\varphi|_{\mathbb{R}}^{(1)} \) (or between \( |\varphi|_{[S, T] \times \mathbb{R}}^{(1)} \) and \( |\varphi|_{[S,T]}^{(1)} \)). 
We define the space \( C^{k+\alpha}([S, T] \times \mathbb{R}) \) for \( k = 0, 1, 2 \) as follows:
\[
C^{k+\alpha}([S, T] \times \mathbb{R}) := \left\{ \varphi : [S, T] \times \mathbb{R} \to \mathbb{R} \mid |\varphi|_{[S, T] \times \mathbb{R}}^{(k+\alpha)} < \infty \right\}.
\]
Let \( S \in [0, T) \) and let \( \phi \) be a function defined on \( \T[S,T] \times \mathbb{R} \), where \( \T[S,T] \) represents the trapezoidal region \( \Delta[0,T] \cap ([0,T] \times [S,T]) \). We introduce the following norms:
\begin{align*}
&[\phi]_{[S, T]}^{(k+\alpha)} := \sup_{\tau \in [0, T]} \left\{|\phi(\tau; \cdot, \cdot)|_{[\tau\vee S,T] \times \mathbb{R}}^{(k+\alpha)}\right\}, \\
&\|\phi\|_{[S, T]}^{(k+\alpha)} := \sup_{\tau \in [0, T]} \left\{|\phi(\tau; \cdot, \cdot)|_{[\tau\vee S, T] \times \mathbb{R}}^{(k+\alpha)} + \left|\partial_{\tau}\phi(\tau; \cdot, \cdot)\right|_{[\tau\vee S,T] \times \mathbb{R}}^{(k+\alpha)}\right\},
\end{align*}
where \( k = 0, 1, 2 \). These norms induce the following normed spaces:
\[
\begin{aligned}
& \Theta_{[S, T]}^{(k+\alpha)} := \left\{\theta(\cdot; \cdot, \cdot) \in C\left(\T[S, T] \times \mathbb{R}\right) : [\theta]_{[S, T]}^{(k+\alpha)} < \infty \right\}, \\
& \Xi_{[S, T]}^{(k+\alpha)} := \left\{\omega(\cdot; \cdot, \cdot) \in C\left(\T[S, T] \times \mathbb{R}\right) : \|\omega\|_{[S, T]}^{(k+\alpha)} < \infty \right\}.
\end{aligned}
\]
It is straightforward to verify that both \( [\phi]_{[S, T]}^{(k+\alpha)} \) and \( \|\phi\|_{[S, T]}^{(k+\alpha)} \) are norms, under which \( \Theta_{[S, T]}^{(k+\alpha)} \) and \( \Xi_{[S, T]}^{(k+\alpha)} \) are Banach spaces, respectively.
\begin{assumption}\label{newsect:asspcoef}
    \begin{itemize}
        \item[(i)] The function $b$ is bounded and Lipschitz continuous, $q \in C^{\alpha}(\mathbb{R})$, $p \in C^{2+\alpha}(\mathbb{R})$, and $\overline{P} \in C^{1}([0,T])$. Additionally, \( R(\cdot;\cdot), \overline{R}(\cdot;\cdot), \overline{Q}(\cdot;\cdot) \) belong to \( \Xi_{[0, T]}^{(\alpha)} \), that is, they satisfy  
       \begin{equation*}
           \sup_{\tau\in[0,T]}\left\{|\phi(\tau; \cdot)|_{[\tau, T] }^{(\alpha)} + \left|\partial_{\tau}\phi(\tau; \cdot)\right|_{[\tau,T]}^{(\alpha)}\right\}<\infty,
       \end{equation*}
       for \( \phi(\cdot;\cdot) = R(\cdot;\cdot), \overline{R}(\cdot;\cdot), \overline{Q}(\cdot;\cdot) \).
        \item[(ii)] The function \( r \) is  three times continuously differentiable. Moreover, one of the following conditions holds:
        \begin{itemize}
            \item[(1)] The derivative \( r' \) is bounded, i.e., \( |r'|^{(0)} < \infty \).
            \item[(2)] The function \( r \) is quadratic growth, that is, there exist constants \( \tilde{K} \) and \( \beta \) such that  
            \begin{equation*}
                |r(x)|\leq \tilde{K}|x|^2+\beta, \quad \forall x\in \mathbb{R}.
            \end{equation*}
            Furthermore, \( r \) satisfies 
            \begin{equation*}
                 |r(x)-r(y)|\leq \tilde{K}|x+y||x-y|, \quad \forall x,y\in \mathbb{R}.
            \end{equation*}
        \end{itemize}
    \end{itemize} 
\end{assumption}
 Condition (i) in Assumption \ref{newsect:asspcoef} imposes some mild regularity conditions on the coefficients, which are standard in the literature on nonlocal PDEs; see, for example, \cite{lei_nonlocal_2023,lei_nonlocality_2024}. Condition (ii) is a technical assumption that facilitats our proof. It is clear that the choices of \( r \) in Remark \ref{newsect:remark} satisfy this condition.
\begin{proposition}\label{newsect:lemma}  
   Suppose that Assumption \ref{newassump:implicit} and Assumption \ref{newsect:asspcoef} hold. Then, there exists a constant \( \hat{K} \)  such that if \( |C + \overline{C}| < \hat{K} \), the equation \eqref{newsect:nonlocalpde} admits a unique solution satisfying Assumption \ref{assumption:V}. Moreover,  \( \hat{\alpha} \) defined by 
\begin{equation}\label{newsect:hatalpha}
    \hat{\alpha}(t,x,\mu) = -\frac{1}{2R} \left[ (C+\overline{C}) \partial_{x} \Phi(t;t,\overline{\mu}) + \overline{R} r' \left( \rho \left( \frac{1}{2}  \partial_{x} \Phi(t;t,\overline{\mu}) \right) \right) \right]
\end{equation}
is an equilibrium strategy.
\end{proposition}

\begin{proof}
   The proof is delegated in Subsection \ref{appendixC}.
\end{proof}}

\section{Technical Proofs}\label{auxproofs}
This section reports technical proofs of some main results in previous sections. 

\subsection{The proof of Proposition \ref{Pro:riccati}}\label{appendixA}
 
Note that system \eqref{riccati1}-\eqref{linearode2}  is decoupled, we can first solve \eqref{riccati1} and then proceed to solve \eqref{riccati2}, \eqref{linearode1} and \eqref{linearode2}. Let us first address \eqref{riccati1} by modifying the convergence method proposed in \cite{Yong2017} to fit into our framework. Denote
 \begin{equation*}
 	\begin{cases}
 		\Theta(t):=U(t;t)^{-1}S^{\top}(t;t),\,
 		\tilde{B}(t):=B(t)-C(t)\Theta(t),\\
 		\tilde{D}(t):=D(t)-F(t)\Theta(t),\,
 		\tilde{D}_0(t):=D_0(t)-F_0(t)\Theta(t),\\
 		\tilde{Q}(\tau;t):=Q(\tau;t)+\Theta^{\top}(t)R(\tau;t)\Theta(t).
 	\end{cases}
 \end{equation*}
 A straightforward computation shows that  \eqref{riccati1}  is equivalent to the Lyapunov equation with parameter $\tau$:
 {\small
 \begin{equation}\label{Lyapunov:1}
 	\begin{cases}
 		\Lambda'(\tau;t)+\Lambda(\tau;t)\tilde{B}(t)+\tilde{B}^{\top}(t)\Lambda(\tau;t)+\tilde{D}^{\top}(t)\Lambda(\tau;t)\tilde{D}(t)+\tilde{D}_0^{\top}(t)\Lambda(\tau;t)\tilde{D}_0(t)+\tilde{Q}(\tau;t)=0,\quad 0\leq\tau\leq t<T,\\
 		\Lambda(\tau;T)=P(\tau),\quad \tau\in[0,T].
 	\end{cases}
 \end{equation}}
  For a partition $\Delta:0=t_{0}<t_{1}<\cdots<t_{N-1}<t_{N}=T$ of $[0,T]$, let us introduce the finite sequences 
 \begin{equation*}
 	\hat{\Lambda}_{k}(\cdot),\,\ 0\leq k\leq N-1, \,\,\ \tilde{\Lambda}_{l}(\cdot),\,\, 0\leq l\leq N-2,
 \end{equation*}
 where $\hat{\Lambda}_{k}$ defined on $(t_{k},t_{k+1}]$ satisfies the following Riccati equation, whose well-posedness is guaranteed by the condition \eqref{condition:pd} (see \cite{book:yong1999, wonham1968}):
 \begin{equation*}
 	\begin{cases}
 		\hat{\Lambda}_{k}'(t)+\hat{\Lambda}_{k}(t)B(t)+B^{\top}(t)\hat{\Lambda}_{k}(t)+D^{\top}(t)\hat{\Lambda}_{k}(t)D(t)+D_0^{\top}(t)\hat{\Lambda}_{k}(t)D_0(t)+Q(t_{k};t)\\ -(D^{\top}(t)\hat{\Lambda}_{k}(t)F(t)+D_0^{\top}(t)\hat{\Lambda}_{k}(t)F_0(t)+\hat{\Lambda}_{k}(t)C(t))(R(t_{k};t)+F^{\top}(t)\hat{\Lambda}_{k}(t)F(t)+F_0^{\top}(t)\hat{\Lambda}_{k}(t)F_0(t))^{-1}\\(F^{\top}(t)\hat{\Lambda}_{k}(t)D(t)+F_0^{\top}(t)\hat{\Lambda}_{k}(t)D_0(t)+C^{\top}(t)\hat{\Lambda}_{k}(t))
 		=0,\quad t\in(t_{k},t_{k+1}),\\
 	\hat{\Lambda}_{k}(t_{k+1})=\tilde{\Lambda}_{k}(t_{k+1})
 	\end{cases}
 \end{equation*}
 with the convention that 
 $\tilde{\Lambda}_{N-1}(t_{N})=P(t_{N-1})$,
 and $\tilde{\Lambda}_{l}$ defined on $(t_{l+1},t_{N}]$ satisfies the Lyapunov equation:
 \begin{equation}\label{Lyapunov:approximation}
 	\begin{cases}
 		\tilde{\Lambda}_{l}'(t)+\tilde{\Lambda}_{l}(t)(B(t)-C(t)\Theta^{\Delta}(t))+(B(t)-C(t)\Theta^{\Delta}(t))^{\top}\tilde{\Lambda}_{l}(t)+Q(t_{l};t)+(\Theta^{\Delta})^{\top}(t)R(t_{l};t)\Theta^{\Delta}(t)\\+(D(t)-F(t)\Theta^{\Delta}(t))^{\top}\tilde{\Lambda}_{l}(t)(D(t)-F(t)\Theta^{\Delta}(t))+(D_0(t)-F_0(t)\Theta^{\Delta}(t))^{\top}\tilde{\Lambda}_{l}(t)(D_0(t)-F_0(t)\Theta^{\Delta}(t))=0,\\
 		\tilde{\Lambda}_{l}(t_{N})=P(t_{l}),
 	\end{cases}
 \end{equation}  	
 where, for  $0\leq l\leq N-1$ and $t\in[t_{l},t_{l+1}]$,
 \begin{equation*}
 	\Theta^{\Delta}(t):=\left[F^{\top}(t)\hat{\Lambda}_{l}(t)F(t)+F_0^{\top}(t)	\hat{\Lambda}_{l}(t)F_0(t)+R(t_l;t)\right]^{-1}\left[F^{\top}(t)\hat{\Lambda}_{l}(t)D(t)+F_0^{\top}(t)\hat{\Lambda}_{l}(t)D_0(t)+C^{\top}(t)\hat{\Lambda}_{l}(t)\right].
 \end{equation*}
 Denote 
 \begin{equation*}
 	\begin{cases}
 		Q^{\Delta}(t):=\sum_{k=0}^{N-1}Q(t_{k};t)I_{(t_{k},t_{k+1}]}(t),\, t\in[0,T],\\
 	    R^{\Delta}(t):=\sum_{k=0}^{N-1}R(t_{k};t)I_{(t_{k},t_{k+1}]}(t),\, t\in[0,T],\\
 		\hat{\Lambda}^{\Delta}(t):=\sum_{k=0}^{N-1}\hat{\Lambda}_{k}(t)I_{(t_{k},t_{k+1}]}(t),\, t\in[0,T].
 	\end{cases}
 \end{equation*}
 Then
 \begin{equation*}
 	\begin{cases}
 		(\hat{\Lambda}^{\Delta})'(t)+\hat{\Lambda}^{\Delta}(t)B(t)+B^{\top}(t)\hat{\Lambda}^{\Delta}(t)+D^{\top}(t)\hat{\Lambda}^{\Delta}(t)D(t)+D_0^{\top}(t)\hat{\Lambda}^{\Delta}(t)D_0(t)+Q^{\Delta}(t)\\ -(D^{\top}(t)\hat{\Lambda}^{\Delta}(t)F(t)+D_0^{\top}(t)\hat{\Lambda}^{\Delta}(t)F_0(t)+\hat{\Lambda}^{\Delta}(t)C(t))(R^{\Delta}(t)+F^{\top}(t)\hat{\Lambda}^{\Delta}(t)F(t)+F_0^{\top}(t)\hat{\Lambda}^{\Delta}(t)F_0(t))^{-1}\\(F^{\top}(t)\hat{\Lambda}^{\Delta}(t)D(t)+F_0^{\top}(t)\hat{\Lambda}^{\Delta}(t)D_0(t)+C^{\top}(t)\hat{\Lambda}^{\Delta}(t))
 		=0,\quad t\in\cup_{k=0}^{N-1}(t_{k},t_{k+1}),\\
 		\hat{\Lambda}^{\Delta}(t_{N})=P(t_{N-1}),\, 	\hat{\Lambda}^{\Delta}(t_{N-1})=\tilde{\Lambda}_{N-2}(t_{N-1}),\cdots,	\hat{\Lambda}^{\Delta}(t_{1})=\tilde{\Lambda}_{0}(t_{1}),
 	\end{cases}
 \end{equation*}
 which is equivalent to
{\small
 \begin{equation*}
 	\begin{cases}
 		(\hat{\Lambda}^{\Delta})'(t)+\hat{\Lambda}^{\Delta}(t)(B(t)-C(t)\Theta^{\Delta}(t))+(B(t)-C(t)\Theta^{\Delta}(t))^{\top}\hat{\Lambda}^{\Delta}(t)+Q^{\Delta}(t)+(\Theta^{\Delta})^{\top}(t)R^{\Delta}(t)\Theta^{\Delta}(t)\\+(D(t)-F(t)\Theta^{\Delta}(t))^{\top}\hat{\Lambda}^{\Delta}(t)(D(t)-F(t)\Theta^{\Delta}(t))+(D_0(t)-F_0(t)\Theta^{\Delta}(t))^{\top}\hat{\Lambda}^{\Delta}(t)(D_0(t)-F_0(t)\Theta^{\Delta}(t))=0,\\
 		t\in\cup_{k=0}^{N-1}(t_{k},t_{k+1}),\\
 	\hat{\Lambda}^{\Delta}(t_{N})=P(t_{N-1}),\, 	\hat{\Lambda}^{\Delta}(t_{N-1})=\tilde{\Lambda}_{N-2}(t_{N-1}),\cdots,	\hat{\Lambda}^{\Delta}(t_{1})=\tilde{\Lambda}_{0}(t_{1}).
 	\end{cases}
 \end{equation*}}
Define
 \begin{equation*}
 	\tilde{\Lambda}^{\Delta}(\tau;t)=\sum_{k=0}^{N-2}\tilde{\Lambda}_{k}(t)I_{(t_{k+1},t_{k+2}]}(\tau),\,\, 0\leq\tau\leq t\leq T
 \end{equation*}
 with the extension on $[0,T]\times[0,T]$ by
 	$\tilde{\Lambda}^{\Delta}(\tau;t)=\tilde{\Lambda}^{\Delta}(\tau;\tau)$, $t\in[0,\tau]$.
Note that $t\rightarrow\tilde{\Lambda}^{\Delta}(\tau;t)$ is continuous on $[0,T]$, whereas $t\rightarrow\hat{\Lambda}^{\Delta}(t)$ might have jumps at $t=t_{k}$.

 In the following, we show 
 \begin{equation}\label{convergence}
 	\lim\limits_{\Vert\Delta\Vert\rightarrow0}(|\hat{\Lambda}^{\Delta}(t)-\Lambda(t;t)|+|\tilde{\Lambda}^{\Delta}(\tau;t)-\Lambda(\tau;t)|)=0,
 \end{equation} 	
 uniformly in $(\tau,t)\in[0,T]\times[0,T]$	with $\Lambda$ satisfying \eqref{riccati1}, thereby proving the existence of the solution to \eqref{riccati1}. The approach is based on an extension of the Arzelà-Ascoli theorem, as outlined in Lemma 6.3 of \cite{Yong2017}. We will also rely on the subsequent two lemmas.
 \begin{lemma}\label{lemma:uniformbound}
 	Let conditions \eqref{condition:pd} and \eqref{condition:monotonicity} hold.  Then  $(\hat{\Lambda}^{\Delta},\tilde{\Lambda}^{\Delta})$ is uniformly bounded in $\Delta$.
 \end{lemma}
 \begin{proof}
 First,	on $(t_{N-1},t_{N}]$, we compare
 {\small
  \begin{equation*}
 		\begin{cases}
 					\hat{\Lambda}_{N-1}'(t)+\hat{\Lambda}_{N-1}(t)(B(t)-C(t)\Theta^{\Delta}(t))+(B(t)-C(t)\Theta^{\Delta}(t))^{\top}\hat{\Lambda}_{N-1}(t)+Q(t_{N-1};t)+(\Theta^{\Delta})^{\top}(t)R(t_{N-1};t)\Theta^{\Delta}(t)\\+(D(t)-F(t)\Theta^{\Delta}(t))^{\top}\hat{\Lambda}_{N-1}(t)(D(t)-F(t)\Theta^{\Delta}(t))+(D_0(t)-F_0(t)\Theta^{\Delta}(t))^{\top}\hat{\Lambda}_{N-1}(t)(D_0(t)-F_0(t)\Theta^{\Delta}(t))=0,\\
 			\hat{\Lambda}_{N-1}(t_{N})=P(t_{N-1})
 		\end{cases}
 	\end{equation*}} with
 	\begin{equation*}
 		\begin{cases}
 		\tilde{\Lambda}_{l}'(t)+\tilde{\Lambda}_{l}(t)(B(t)-C(t)\Theta^{\Delta}(t))+(B(t)-C(t)\Theta^{\Delta}(t))^{\top}\tilde{\Lambda}_{l}(t)+Q(t_{l};t)+(\Theta^{\Delta})^{\top}(t)R(t_{l};t)\Theta^{\Delta}(t)\\+(D(t)-F(t)\Theta^{\Delta}(t))^{\top}\tilde{\Lambda}_{l}(t)(D(t)-F(t)\Theta^{\Delta}(t))+(D_0(t)-F_0(t)\Theta^{\Delta}(t))^{\top}\tilde{\Lambda}_{l}(t)(D_0(t)-F_0(t)\Theta^{\Delta}(t))=0,\\
 		\tilde{\Lambda}_{l}(t_{N})=P(t_{l}).
 		\end{cases}
 	\end{equation*}  	
 	Because  $l\rightarrow Q(t_{l};\cdot)$ and  $l\rightarrow R(t_{l};\cdot)$, and  $l\rightarrow P(t_{l})$ are non-decreasing, we have
 	\begin{equation*}
 		0\leq \tilde{\Lambda}_{l}(t)\leq\tilde{\Lambda}_{l+1}(t)\leq\cdots\leq \tilde{\Lambda}_{N-2}(t)\leq \hat{\Lambda}_{N-1}(t)=\hat{\Lambda}^{\Delta}(t),\, t\in(t_{N-1},t_{N}],\, 0\leq l\leq N-2.
 	\end{equation*}
 Next,	on $(t_{N-2},t_{N-1}]$, we compare 
{\small \begin{equation*}
 		\begin{cases}
 			\hat{\Lambda}_{N-2}'(t)+\hat{\Lambda}_{N-2}(t)(B(t)-C(t)\Theta^{\Delta}(t))+(B(t)-C(t)\Theta^{\Delta}(t))^{\top}\hat{\Lambda}_{N-2}(t)+Q(t_{N-2};t)+(\Theta^{\Delta})^{\top}(t)R(t_{N-2};t)\Theta^{\Delta}(t)\\+(D(t)-F(t)\Theta^{\Delta}(t))^{\top}\hat{\Lambda}_{N-2}(t)(D(t)-F(t)\Theta^{\Delta}(t))+(D_0(t)-F_0(t)\Theta^{\Delta}(t))^{\top}\hat{\Lambda}_{N-2}(t)(D_0(t)-F_0(t)\Theta^{\Delta}(t))=0,\\
 			\hat{\Lambda}_{N-2}(t_{N-1})=\tilde{\Lambda}_{N-2}(t_{N-1})
 		\end{cases}
 	\end{equation*}}
 	with
 	\begin{equation*}
 		\begin{cases}
 			\tilde{\Lambda}_{l}'(t)+\tilde{\Lambda}_{l}(t)(B(t)-C(t)\Theta^{\Delta}(t))+(B(t)-C(t)\Theta^{\Delta}(t))^{\top}\tilde{\Lambda}_{l}(t)+Q(t_{l};t)+(\Theta^{\Delta})^{\top}(t)R(t_{l};t)\Theta^{\Delta}(t)\\+(D(t)-F(t)\Theta^{\Delta}(t))^{\top}\tilde{\Lambda}_{l}(t)(D(t)-F(t)\Theta^{\Delta}(t))+(D_0(t)-F_0(t)\Theta^{\Delta}(t))^{\top}\tilde{\Lambda}_{l}(t)(D_0(t)-F_0(t)\Theta^{\Delta}(t))=0,\\
 			\tilde{\Lambda}_{l}(t_{N-1})\leq \tilde{\Lambda}_{N-2}(t_{N-1}).
 		\end{cases}
 	\end{equation*}  	
 	Using the monotonicity of $l\rightarrow Q(t_{l};\cdot)$,  $l\rightarrow R(t_{l};\cdot)$ and  $l\rightarrow P(t_{l})$ again, we have
 	\begin{equation*}
 		0\leq \tilde{\Lambda}_{l}(t)\leq\tilde{\Lambda}_{l+1}(t)\leq\cdots\leq \tilde{\Lambda}_{N-3}(t)\leq \hat{\Lambda}_{N-2}(t)=\hat{\Lambda}^{\Delta}(t),\, t\in(t_{N-2},t_{N-1}],\, 0\leq l\leq N-3.
 	\end{equation*}
 	Inductively, we can further obtain 
 	\begin{equation*}
 		0\leq \tilde{\Lambda}_{l}(t)\leq\tilde{\Lambda}_{l+1}(t)\leq\cdots\leq \tilde{\Lambda}_{k}(t)\leq \hat{\Lambda}_{k+1}(t)=\hat{\Lambda}^{\Delta}(t),\, t\in(t_{k+1},t_{k+2}],\, 0\leq l\leq k\leq N-2.
 	\end{equation*}
 	Therefore,
 	\begin{equation*}
 		0\leq \tilde{\Lambda}_{l}(t)\leq\hat{\Lambda}^{\Delta}(t),\,\ t\in(t_{l+1},t_{N}],\, 0\leq l\leq N-2. 
 	\end{equation*}
 	We now introduce, for $t\in[0,T]$, 
 	\begin{equation*}
 		\begin{cases}
 			(\Pi^{\Delta})'(t)+\Pi^{\Delta}(t)B(t)+B^{\top}(t)\Pi^{\Delta}(t)+D^{\top}(t)\Pi^{\Delta}(t)D(t)+D_0^{\top}(t)\Pi^{\Delta}(t)D_0(t)+Q^{\Delta}(t)=0,\\
 			\Pi^{\Delta}(T)=P(T).
 		\end{cases}
 	\end{equation*}
 	Comparing the above with 
 	\begin{equation*}
 		\begin{cases}
 			(\hat{\Lambda}^{\Delta})'(t)+\hat{\Lambda}^{\Delta}(t)B(t)+B^{\top}(t)\hat{\Lambda}^{\Delta}(t)+D^{\top}(t)\hat{\Lambda}^{\Delta}(t)D(t)+D_0^{\top}(t)\hat{\Lambda}^{\Delta}(t)D_0(t)+Q^{\Delta}(t)\\ -(D^{\top}(t)\hat{\Lambda}^{\Delta}(t)F(t)+D_0^{\top}(t)\hat{\Lambda}^{\Delta}(t)F_0(t)+\hat{\Lambda}^{\Delta}(t)C(t))(R^{\Delta}(t)+F^{\top}(t)\hat{\Lambda}^{\Delta}(t)F(t)+F_0^{\top}(t)\hat{\Lambda}^{\Delta}(t)F_0(t))^{-1}\\(F^{\top}(t)\hat{\Lambda}^{\Delta}(t)D(t)+F_0^{\top}(t)\hat{\Lambda}^{\Delta}(t)D_0(t)+C^{\top}(t)\hat{\Lambda}^{\Delta}(t))
 		=0,\quad t\in\cup_{k=0}^{N-1}(t_{k},t_{k+1}),\\
 		\hat{\Lambda}^{\Delta}(t_{N})=P(t_{N-1}),\, 	\hat{\Lambda}^{\Delta}(t_{N-1})=\tilde{\Lambda}_{N-2}(t_{N-1}),\cdots,	\hat{\Lambda}^{\Delta}(t_{1})=\tilde{\Lambda}_{0}(t_{1}),
 		\end{cases}
 	\end{equation*}
 	on $(t_{N-1},t_{N}]$, using  the proposition 2.3 in \cite{Yong2017}, we have 
 	\begin{equation*}
 		\hat{\Lambda}^{\Delta}(t)\leq \Pi^{\Delta}(t).
 	\end{equation*}
 	As such 
 	\begin{equation*}
 		\hat{\Lambda}^{\Delta}(t_{N-1})=\tilde{\Lambda}_{N-2}(t_{N-1})\leq \hat{\Lambda}^{\Delta}(t_{N-1}+0)\leq \Pi^{\Delta}(t_{N-1}).
 	\end{equation*}
 	Then
 	\begin{equation*}
 		\hat{\Lambda}^{\Delta}(t)\leq \Pi^{\Delta}(t),\quad t\in(t_{N-2},t_{N-1}].
 	\end{equation*}
 	Inductively, we obtain
 	\begin{equation*}
 		\hat{\Lambda}^{\Delta}(t)\leq \Pi^{\Delta}(t),\quad t\in[0,T].
 	\end{equation*}
 	It then holds
 	\begin{equation*}
 		0\leq \tilde{\Lambda}_{k}(t)\leq \hat{\Lambda}^{\Delta}(t)\leq\Pi^{\Delta}(t),\, t\in(t_{k+1},T],\, 0\leq k \leq N-2.
 	\end{equation*}
 	Note that
 	\begin{equation*}
 		\begin{split}
 			|\Pi^{\Delta}(t)|\leq \Vert P \Vert_{C([0,T];\S^{d}_{+})}&+\int_{t}^{T}\bigg(\Vert Q\Vert_{C(\Delta[0,T];\S^{d}_{+})}+(2\Vert B\Vert_{C([0,T];\R^{d\times d})}+\Vert D\Vert^2_{C([0,T];\R^{d\times d})}\\&+\Vert D_0\Vert^2_{C([0,T];\R^{d\times d})})|\Pi^{\Delta}(s)|\bigg)ds\\\leq \Vert P \Vert_{C([0,T];\S^{d}_{+})}&+\Vert Q\Vert_{C(\Delta[0,T];\S^{d}_{+})}T+(2\Vert B\Vert_{C([0,T];\R^{d\times d})}\\&+\Vert D\Vert^2_{C([0,T];\R^{d\times d})}+\Vert D_0\Vert^2_{C([0,T];\R^{d\times d})})\int_{t}^{T}|\Pi^{\Delta}(s)|ds.
 		\end{split}
 	\end{equation*}
 	Applying Gronwall's inequality, we obtain
 	\begin{equation}\label{boundednessoflambda}
 		\Vert \Pi^{\Delta}\Vert_{C([0,T];\S^{d}_{+})}\leq K_{1}e^{K_{2}T}
 	\end{equation}
 	with 
 	\begin{equation*}
 		\begin{split}
 				&K_{1}:=\Vert P \Vert_{C([0,T];\S^{d}_{+})}+\Vert Q\Vert_{C(\Delta[0,T];\S^{d}_{+})} T,\\
 				&K_{2}:=2\Vert B\Vert_{C([0,T];\R^{d\times d})}+\Vert D\Vert^2_{C([0,T];\R^{d\times d})}+\Vert D_0\Vert^2_{C([0,T];\R^{d\times d})}.
 		\end{split}
 	\end{equation*}
 	Thus $(\hat{\Lambda}^{\Delta},\tilde{\Lambda}^{\Delta})$ is uniformly bounded in $\Delta$.
 \end{proof}
 \begin{lemma}\label{lemma:tildabeta}
 		Let conditions \eqref{condition:pd} and \eqref{condition:monotonicity} hold.  Then   
 	\begin{equation}\label{tildebeta:continuity}
 		\begin{split}
 			|\tilde{\Lambda}_{l}(t)-\tilde{\Lambda}_{l-1}(t)|\leq K\bigg[|P(t_{l})-P(t_{l-1})|&+\int_{t}^{T}(|Q(t_{l};s)-Q(t_{l-1};s)|\\&+|R(t_{l};s)-R(t_{l-1};s)|)ds\bigg],\,t\in[t_{l+1},t_{N}].
 		\end{split}
 	\end{equation}
 \end{lemma}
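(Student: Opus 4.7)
The plan is to treat $\Delta\tilde\Lambda_l := \tilde\Lambda_l - \tilde\Lambda_{l-1}$ as the solution of a \emph{linear} matrix Lyapunov equation on $[t_{l+1},T]$ and apply Gronwall. The critical observation is that in the defining system \eqref{Lyapunov:approximation}, both $\tilde\Lambda_l$ and $\tilde\Lambda_{l-1}$ are driven by exactly the same feedback $\Theta^\Delta$: it is defined piecewise on $[t_k,t_{k+1}]$ from $\hat\Lambda_k$ and $R(t_k;\cdot)$, and does not depend on the index $l$ of $\tilde\Lambda_l$. Subtracting the two Lyapunov equations therefore annihilates all $\Theta^\Delta$-dependent quadratic terms involving the unknowns, leaving a linear equation for $\Delta\tilde\Lambda_l$ whose inhomogeneity is $(Q(t_l;t)-Q(t_{l-1};t)) + \Theta^{\Delta\top}(t)(R(t_l;t)-R(t_{l-1};t))\Theta^\Delta(t)$, with terminal datum $P(t_l)-P(t_{l-1})$.

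Next I would establish a uniform-in-$\Delta$ bound on $\Theta^\Delta$. By condition \eqref{condition:pd}, $R(t_l;t)\geq\delta I$, so the matrix $R(t_l;t)+F^\top(t)\hat\Lambda_l(t)F(t)+F_0^\top(t)\hat\Lambda_l(t)F_0(t)$ is $\geq\delta I$ and its inverse has operator norm at most $1/\delta$. Combining this with Lemma \ref{lemma:uniformbound} (which bounds $\|\hat\Lambda^\Delta\|$ uniformly in $\Delta$) and the continuity of $B,C,D,D_0,F,F_0$ on $[0,T]$, one obtains a constant $K$ independent of $\Delta$ such that $\|\Theta^\Delta(t)\|\leq K$; the coefficients $B-C\Theta^\Delta$, $D-F\Theta^\Delta$, $D_0-F_0\Theta^\Delta$ are then bounded by $K$ uniformly in $t\in[0,T]$.

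Writing the Lyapunov equation for $\Delta\tilde\Lambda_l$ in integral form from $t$ to $T$ and taking norms, the $R$-difference contributes $\|\Theta^\Delta\|^2|R(t_l;s)-R(t_{l-1};s)|\leq K^2|R(t_l;s)-R(t_{l-1};s)|$, while the four bracket terms containing $\Delta\tilde\Lambda_l$ are each dominated by $K|\Delta\tilde\Lambda_l(s)|$. This produces
\begin{equation*}
|\Delta\tilde\Lambda_l(t)|\leq|P(t_l)-P(t_{l-1})|+K\int_{t}^{T}\!\bigl(|Q(t_l;s)-Q(t_{l-1};s)|+|R(t_l;s)-R(t_{l-1};s)|\bigr)ds+K\int_{t}^{T}\!|\Delta\tilde\Lambda_l(s)|\,ds.
\end{equation*}
Backward Gronwall's inequality then absorbs the last term into a multiplicative factor $e^{KT}$, which is itself a constant, yielding \eqref{tildebeta:continuity} after renaming constants.

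The calculations are routine once the Lyapunov structure is visible, so the main conceptual step is simply the recognition that $\Theta^\Delta$ is shared across the two equations; the rest is standard Gronwall together with the $R\geq\delta I$ invertibility. The only care needed is checking that $\Delta\tilde\Lambda_l$ is well-defined on $[t_{l+1},T]$, which follows since $\tilde\Lambda_l$ is defined on $(t_{l+1},T]$ and $\tilde\Lambda_{l-1}$ on $(t_l,T]\supset(t_{l+1},T]$, so both are available on the required interval.
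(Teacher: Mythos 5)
Your proof is correct and takes essentially the same route as the paper: subtract the two Lyapunov equations (noting $\Theta^\Delta$ is shared, so the difference satisfies a linear equation with inhomogeneity in $Q,R,P$), bound the coefficients uniformly in $\Delta$ via Lemma \ref{lemma:uniformbound} together with $R\geq\delta I$, and apply Gronwall. The paper simply compresses the uniform-coefficient-bound step into one sentence, while you spell it out — a useful expansion, but not a different argument.
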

 \begin{proof}
 	Under the given conditions, all coefficients of the Lyapunov equation \eqref{Lyapunov:approximation} are bounded.
 	Now, comparing the equations for $\tilde{\Lambda}_{l}$ and $\tilde{\Lambda}_{l-1}$, we have
 	\begin{equation*}
 		\begin{split}
 			|\tilde{\Lambda}_{l}(t)-\tilde{\Lambda}_{l-1}(t)|&\leq |P(t_{l})-P(t_{l-1})|+K\int_{t}^{T}\bigg(|\tilde{\Lambda}_{l}(s)-\tilde{\Lambda}_{l-1}(s)|+|Q(t_{l};s)-Q(t_{l-1};s)|\\&+|R(t_{l};s)-R(t_{l-1};s)|\bigg)ds,\, t\in[t_{l+1},t_{N}].
 		\end{split}
 	\end{equation*}
 	Then \eqref{tildebeta:continuity}  follows from Gronwall's inequality.
 \end{proof}
 
  \begin{proposition}\label{pro:A1}
 	Under conditions \eqref{condition:pd} and \eqref{condition:monotonicity},  \eqref{riccati1} admits a unique solution 
 	$\Lambda\in C(\Delta[0,T];\S^{d}_{+})$.  Moreover,  for any $0\leq\tau_{1}\leq\tau_{2}\leq T$,
 	\begin{equation}\label{monotone:1}
 		\Lambda(\tau_{1};t)\leq\Lambda(\tau_{2};t),\quad 0\leq\tau_{1}\leq\tau_{2}\leq t\leq T.
 	\end{equation} 
  \end{proposition}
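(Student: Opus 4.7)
The plan is to establish existence by a compactness argument applied to the piecewise-in-$\tau$ approximation scheme $(\hat{\Lambda}^\Delta, \tilde{\Lambda}^\Delta)$ introduced before the statement, then derive uniqueness from the fact that once the diagonal $t\mapsto\Lambda(t;t)$ is fixed the off-diagonal equation is linear, and finally deduce monotonicity by a standard comparison for the resulting Lyapunov equation.

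For existence, I would first combine Lemma \ref{lemma:uniformbound} with the uniform lower bound $R(\tau;t)\geq \delta I$ from \eqref{condition:pd} to show that
$R^\Delta(t)+F^\top(t)\hat{\Lambda}^\Delta(t)F(t)+F_0^\top(t)\hat{\Lambda}^\Delta(t)F_0(t)\geq \delta I$,
so that $\Theta^\Delta$ is uniformly bounded in $\Delta$. Feeding this back into the Lyapunov equation \eqref{Lyapunov:approximation} shows that the driving data is uniformly bounded, so each $t\mapsto\tilde{\Lambda}_l(t)$ is uniformly Lipschitz, giving equicontinuity in $t$. Equicontinuity in $\tau$ comes from Lemma \ref{lemma:tildabeta}: the jumps of $\tau\mapsto\tilde{\Lambda}^\Delta(\tau;t)$ at the partition points are controlled by the continuity moduli of $P$, $Q$, $R$, which tend to zero with $\Vert\Delta\Vert$ by uniform continuity on the compact $\Delta[0,T]$. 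Applying the extension of Arzel\`a--Ascoli (Lemma 6.3 of \cite{Yong2017}), some subsequence of $\tilde{\Lambda}^\Delta$ converges uniformly to a continuous $\Lambda\in C(\Delta[0,T];\S^d_+)$, while the matching boundary conditions $\hat{\Lambda}^\Delta(t_{k+1})=\tilde{\Lambda}_k(t_{k+1})$ force $\hat{\Lambda}^\Delta(t)\to \Lambda(t;t)$ uniformly. Positivity passes to the limit. Finally, using continuity of $\hat\Lambda\mapsto\Theta$ on the uniformly bounded set (secured by $R\geq\delta I$), I would pass to the limit in the integrated form of \eqref{Lyapunov:approximation} to conclude that $\Lambda$ solves \eqref{Lyapunov:1}, equivalently \eqref{riccati1}.

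For uniqueness, the key observation is that once $t\mapsto\Lambda(t;t)$ is fixed, the coefficients $\tilde{B}$, $\tilde{D}$, $\tilde{D}_0$ and $\tilde{Q}(\tau;\cdot)$ depend only on the diagonal, so \eqref{Lyapunov:1} becomes a \emph{linear} Lyapunov ODE in $t$ on $[\tau,T]$ with terminal value $P(\tau)$, which admits a unique solution. Thus it suffices to prove uniqueness of the diagonal. Taking two solutions $\Lambda^1,\Lambda^2$, restricting to $\tau=t$, subtracting the two Riccati equations \eqref{riccati1} and exploiting Lipschitz dependence of $\Theta$ and $\tilde{Q}(\cdot;\cdot)$ on $\Lambda(t;t)$ over the bounded set identified in Lemma \ref{lemma:uniformbound} (again thanks to $R\geq\delta I$), a backward Gronwall argument from $t=T$ yields $\Lambda^1(t;t)=\Lambda^2(t;t)$.

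Monotonicity \eqref{monotone:1} follows from the comparison principle for linear Lyapunov equations: for $\tau_1\leq\tau_2\leq t$, both $\Lambda(\tau_1;\cdot)$ and $\Lambda(\tau_2;\cdot)$ satisfy \eqref{Lyapunov:1} on $[\tau_2,T]$ with the \emph{same} coefficients $\tilde{B},\tilde{D},\tilde{D}_0$ but with driving term $\tilde{Q}(\tau_i;\cdot)$ and terminal value $P(\tau_i)$ ordered by \eqref{condition:monotonicity}, so the comparison result (cf.\ Proposition 2.3 of \cite{Yong2017}) gives $\Lambda(\tau_1;t)\leq\Lambda(\tau_2;t)$. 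The main obstacle is the convergence step, because the family $\tilde{\Lambda}^\Delta$ is only piecewise continuous in $\tau$ and the nonlinear quadratic term $(\Theta^\Delta)^\top R^\Delta \Theta^\Delta$ must be passed to the limit; handling this requires the uniform lower bound $R\geq\delta I$ to guarantee uniform invertibility, together with the Yong-type Arzel\`a--Ascoli lemma that tolerates the asymptotic (rather than strict) continuity in $\tau$.
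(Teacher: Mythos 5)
Your existence and monotonicity arguments follow the paper's proof closely, and you even make explicit a step the paper glosses over (that $R\geq\delta I$ plus Lemma \ref{lemma:uniformbound} gives a uniform bound on $\Theta^\Delta$, which is needed for the Lipschitz-in-$t$ estimate and for passing the quadratic term to the limit).

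The gap is in the uniqueness step. Your observation that uniqueness of the full solution reduces to uniqueness of the diagonal $t\mapsto\Lambda(t;t)$ is correct, since once the diagonal is known \eqref{Lyapunov:1} is a linear ODE in $t$ for each fixed $\tau$. But the proposed way to obtain uniqueness of the diagonal --- ``restricting to $\tau=t$'' in \eqref{riccati1}, subtracting, and applying Gronwall backward from $T$ --- does not go through. The derivative $\Lambda'(\tau;t)$ in \eqref{riccati1} is $\partial_t\Lambda$ at \emph{fixed} $\tau$, whereas $\frac{d}{dt}\Lambda(t;t)=\partial_\tau\Lambda|_{\tau=t}+\partial_t\Lambda|_{\tau=t}$ also involves the uncontrolled $\partial_\tau$ term; so setting $\tau=t$ does not produce a closed ODE for the diagonal to which Gronwall can be applied. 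Equivalently, in integrated form $\overline{\Lambda}(t;t)=\int_t^T[\cdots\overline{\Lambda}(t;s)\cdots\overline{\Lambda}(s;s)\cdots]\,ds$ still involves the off-diagonal values $\overline{\Lambda}(t;s)$ for $s>t$, so the diagonal alone does not close. The paper avoids this by setting up a fixed-point map $v(\cdot)\mapsto\tilde V(\cdot)$: given a candidate diagonal $v$, solve the resulting \emph{linear} parametrized ODE \eqref{uq:ode} to obtain $V(\tau;t)$, extract the new diagonal $\tilde V(t):=V(t;t)$, and show this map is a contraction on $C([T-\delta,T];\R^{d\times d})$, then iterate backward. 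Your argument can be repaired without a fixed point by introducing $m(t):=\sup_{0\leq\tau\leq t}|\overline\Lambda(\tau;t)|$ and noting that both $|\overline\Lambda(\tau;s)|$ and $|\overline\Lambda(s;s)|$ (for $\tau\leq t\leq s$) are bounded by $m(s)$, which gives $m(t)\leq K\int_t^T m(s)\,ds$ and hence $m\equiv 0$ --- but that is a genuinely two-variable estimate, not the diagonal-only Gronwall you described. Also note that the bound on the coefficients in the uniqueness argument should come from the hypothesis $\Lambda^i\in C(\Delta[0,T];\S^d_+)$ (which with $R\geq\delta I$ gives $U(t;t)\geq\delta I$), not from Lemma \ref{lemma:uniformbound}, which bounds the approximants $(\hat\Lambda^\Delta,\tilde\Lambda^\Delta)$ rather than an arbitrary solution.
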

 \begin{proof}
 	\textit{Existence.}
 	 The proof is analogous to that of Theorem 6.4 in \cite{Yong2017}. For the reader's convenience, we provide the proof here. We will show that
 	\begin{equation*}
 		\lim\limits_{\Vert\Delta\Vert\rightarrow0}(|\hat{\Lambda}^{\Delta}(t)-\Lambda(t;t)|+|\tilde{\Lambda}^{\Delta}(\tau;t)-\Lambda(\tau;t)|)=0,
 	\end{equation*}
 	 holds uniformly in $(\tau,t)\in[0,T]\times[0,T]$ with $\Lambda$ satisfying \eqref{riccati1}. Given a partition $\Delta:0=t_{0}<\cdots<t_{N}=T$, Lemma \ref{lemma:uniformbound} implies 
 	\begin{equation*}
 		\begin{cases}
 			|\tilde{\Lambda}_{t}^{\Delta}(\tau;t)|\leq K,\quad t\neq\tau, \\
 			|(\hat{\Lambda}^{\Delta})'(t)|\leq K,\quad t\neq t_{k},\, 0\leq k\leq N
 		\end{cases}
 	\end{equation*}
 	with some uniform constant $K>0$ (independent of partition $\Delta$). It follows that 
 	\begin{equation*}
 		|\tilde{\Lambda}^{\Delta}(\tau;t)-\tilde{\Lambda}^{\Delta}(\tau;\overline{t})|\leq K|t-\overline{t}|,\,\ t,\overline{t}\in[0,T],\quad \tau\in[0,T].
 	\end{equation*}
 	On the other hand, by Lemma \ref{lemma:tildabeta}, for any $\tau,\overline{\tau}\in[0,T]$ with $|\tau-\overline{\tau}|<\min\limits_{0\leq k\leq N-1}|t_{k+1}-t_{k}|$,
 	we have 
 	\begin{equation*}
 		|\tilde{\Lambda}^{\Delta}(\tau;t)-\tilde{\Lambda}^{\Delta}(\overline{\tau};t)|\leq\tilde{\omega}(\Vert\Delta\Vert)
 	\end{equation*}
 	with $\tilde{\omega}$ being a modulus of continuity. Then, using the extension of Arzela-Ascoli Theorem, we have 
 	\begin{equation*} 		\lim\limits_{\Vert\Delta\Vert\rightarrow0}|\tilde{\Lambda}^{\Delta}(\tau;t)-\Lambda(\tau;t)|=0
 	\end{equation*}
 	for some continuous function $\Lambda$.
 	
 	Next, let $t\in(t_{k+1},t_{k+2}]$. Then 
 	\begin{equation*}
 		\begin{split}
 			|\hat{\Lambda}^{\Delta}(t)-\tilde{\Lambda}^{\Delta}(t;t)|&\leq |\hat{\Lambda}_{k+1}(t)-\hat{\Lambda}_{k+1}(t_{k+2})|+|\tilde{\Lambda}_{k+1}(t_{k+2})-\tilde{\Lambda}_{k}(t)|\\&\leq|\hat{\Lambda}_{k+1}(t)-\hat{\Lambda}_{k+1}(t_{k+2})|+|\tilde{\Lambda}_{k+1}(t_{k+2})-\tilde{\Lambda}_{k}(t_{k+2})|+|\tilde{\Lambda}_{k}(t_{k+2})-\tilde{\Lambda}_{k}(t)|\\&\leq \tilde{\omega}(\Vert\Delta\Vert),
 		\end{split}
 	\end{equation*}
 	where we have used the fact that $\hat{\Lambda}_{k+1}(t_{k+2})=\tilde{\Lambda}_{k+1}(t_{k+2})$ and  Lemma \ref{lemma:tildabeta}.
 	As such, we have 
 	\begin{equation*}
 		\lim\limits_{\Vert\Delta\Vert\rightarrow0}|\hat{\Lambda}^{\Delta}(t)-\Lambda(t;t)|=0.
 	\end{equation*}
 	Consequently, 
 	\begin{equation*}
 		\lim\limits_{\Vert\Delta\Vert\rightarrow0}|\Theta^{\Delta}(t)-\Theta(t)|=0.
 	\end{equation*}
 	Thus  we see  that $\Lambda$ satisfies \eqref{riccati1}.

 	\textit{Uniqueness.}
 	 Suppose  that there are two solutions $\Lambda^{1}$ and $\Lambda^{2}$. Then
 	$\overline{\Lambda}(\cdot;\cdot):=\Lambda^{1}(\cdot;\cdot)-\Lambda^{2}(\cdot;\cdot)$
 	satisfies
 	\begin{equation}\label{uq:ode}
 		\begin{cases}
 			\overline{\Lambda}'(\tau;t)+ \overline{\Lambda}(\tau;t)\Ac(t)+\Ac^{\top}(t)\overline{\Lambda}(\tau;t)+\Bc_1^{\top}(t)\overline{\Lambda}(\tau;t)\Bc_1(t)+\Bc_2^{\top}(t)\overline{\Lambda}(\tau;t)\Bc_2(t)\\+\sum_{i=1}^{K}\Cc_{i}(\tau;t)\overline{\Lambda}(t;t)\Cc_{-i}(\tau;t)=0,\quad 0\leq\tau\leq t\leq T, \\
 			\overline{\Lambda}(\tau;T)=0
 		\end{cases}
 	\end{equation}
 	for some continuous (matrix-valued) functions $\Ac$, $\Bc_{i}$, and $\Cc_{i}$ and some constant $K$.
 	We will use a standard contraction mapping argument to show that the above linear nonlocal ODE  admits a unique solution. Hence, it is necessary that 
 	\begin{equation*}
 		\overline{\Lambda}(\tau;t)=0,\quad 0\leq\tau\leq s\leq T,
 	\end{equation*}
 	which leads to the uniqueness of the solution of \eqref{riccati1}.
 	
 	Given $v_{1}(\cdot)$ and $v_{2}(\cdot)\in C([0,T];\R^{d\times d})$, let $V_{i}$ be the solution of 
 		\begin{equation*}
 		\begin{cases}
 		V_{i}'(\tau;t)+ V_{i}(\tau;t)\Ac(t)+\Ac^{\top}(t)V_{i}(\tau;t)+\Bc_1^{\top}(t)	V_{i}(\tau;t)\Bc_1(t)+\Bc_2^{\top}(t)	V_{i}(\tau;t)\Bc_2(t)\\+\sum_{i=1}^{K}\Cc_{i}(\tau;t)v_{i}(t)\Cc_{-i}(\tau;t)=0,\quad 0\leq\tau\leq t\leq T, \\
 			\overline{\Lambda}(\tau;T)=0.
 		\end{cases}
 	\end{equation*}
 		Simple calculation yields 
	\begin{equation*}
	\begin{cases}
		(V_{1}-V_{2})'(\tau;t)+ (V_{1}-V_{2})(\tau;t)\Ac(t)+\Ac^{\top}(t)(V_{1}-V_{2})(\tau;t)+\Bc_1^{\top}(t)	(V_{1}-V_{2})(\tau;t)\Bc_1(t)\\+\Bc_2^{\top}(t)	(V_{1}-V_{2})(\tau;t)\Bc_2(t)+\sum_{i=1}^{K}\Cc_{i}(\tau;t)(v_{1}-v_{2})(t)\Cc_{-i}(\tau;t)=0,\quad 0\leq\tau\leq t\leq T, \\
		(V_{1}-V_{2})(\tau;T)=0.
	\end{cases}
\end{equation*}
 	Therefore, 
 	\begin{equation*}
 		|V_{1}(\tau;t)-V_{2}(\tau;t)|\leq K\int_{t}^{T}\left(|V_{1}(\tau;s)-V_{2}(\tau;s)|+|v_{1}(s)-v_{2}(s)|\right)ds,
 	\end{equation*}
 	and the Gronwall's inequality yields  
 	\begin{equation*}
 		\Vert V_{1}(\tau;\cdot)-V_{2}(\tau;\cdot)\Vert_{C([T-\delta,T];\R^{d\times d})}\leq \tilde{K}\delta\Vert v_{1}(\cdot)-v_{2}(\cdot)\Vert_{C([T-\delta,T];\R^{d\times d })}.
 	\end{equation*}
 	Hence, it holds that
 	\begin{equation*}
 		\Vert \tilde{V}_{1}(\cdot)-\tilde{V}_{2}(\cdot)\Vert_{C([T-\delta,T];\R^{d\times d })}\leq \tilde{K}\delta\Vert v_{1}(\cdot)-v_{2}(\cdot)\Vert_{C([T-\delta,T];\R^{d\times d})},
 	\end{equation*}
 	where $\tilde{V}_{i}(t):=V_{i}(t;t)$. Thus if $\delta$ is small, the map $v(\cdot)\rightarrow\tilde{V}(\cdot)$ is contractive on $C([T-\delta,T];\R^{d\times d})$.
 	Then a standard argument yields a unique fixed point of $v(\cdot)\rightarrow\tilde{V}(\cdot)$ on $C([0,T];\R^{d\times d})$.  After obtaining the diagonal functions $V(t;t)$, the well-posedness of \eqref{uq:ode} can be established by solving a linear ODE. Therefore, we obtain the uniqueness of \eqref{uq:ode}.
 	
 	\textit{Property \eqref{monotone:1}}. Let $\Lambda$ be the unique solution of \eqref{riccati1}. Note that $\Lambda$ also satisfies \eqref{Lyapunov:1}. As $\tau\rightarrow Q(\tau;\cdot)$, $\tau\rightarrow R(\tau;\cdot)$ and $\tau\rightarrow P(\tau)$ are non-decreasing, we deduce that for any $0\leq\tau_{1}\leq\tau_{2}\leq t\leq T$,	$\Lambda(\tau_{1};t)\leq\Lambda(\tau_{2};t)$,
 	which completes the proof.
 	 \end{proof}
   
We next turn to investigate the equation \eqref{riccati2}, which has a similar structure to  \eqref{riccati1}. Thus, we can essentially apply the same method as above with minor modifications to establish the existence and uniqueness of the solution. Denote
 \begin{equation*}
 	\begin{cases}
 		\hat{Q}(\tau;t):=Q(\tau;t)+\overline{Q}(\tau;t),\, \hat{R}(\tau;t):=R(\tau;t)+\overline{R}(\tau;t),\,	\hat{B}(t):=B(t)+\overline{B}(t),\\
 		\hat{C}(t):=C(t)+\overline{C}(t),\,
 		\hat{D}(t):=D(t)+\overline{D}(t),\, \hat{F}(t):=F(t)+\overline{F}(t),\, \hat{F}_0(t):=F_0(t)+\overline{F}_0(t),\\	\hat{P}(\tau):=P(\tau)+\overline{P}(\tau),\,
 		\hat{\Theta}(t):=W^{-1}(t;t)Z^{\top}(t;t).
 	\end{cases}
 \end{equation*}
A straightforward computation shows that \eqref{riccati2} is equivalent to the following:
{\small
\begin{equation}
	\begin{cases}
			\beta'(\tau;t)+\beta(\tau;t)(\hat{B}(t)-\hat{C}(t)\hat{\Theta}(t))+(\hat{B}(t)-\hat{C}(t)\hat{\Theta}(t))^{\top}\beta(\tau;t)+(\hat{D}_0(t)-\hat{F}_0(t)\hat{\Theta}(t))^{\top}\beta(\tau;t)(\hat{D}_0(t)-\hat{F}_0(t)\hat{\Theta}(t))\\+(\hat{D}(t)-\hat{F}(t)\hat{\Theta}(t))^{\top}\Lambda(\tau;t)(\hat{D}(t)-\hat{F}(t)\hat{\Theta}(t))+\hat{Q}(\tau;t)+\hat{\Theta}^{\top}(t)\hat{R}(\tau;t)\hat{\Theta}(t)=0,\\
			\beta(\tau;T):=\hat{P}(\tau).
	\end{cases}
\end{equation}}
For a partition $\Delta:0=t_{0}<t_{1}<\cdots<t_{N-1}<t_{N}=T$, let us introduce the finite sequences 
\begin{equation*}
	\hat{\beta}_{k}(\cdot),\,\ 0\leq k\leq N-1, \,\,\ \tilde{\beta}_{l}(\cdot),\,\, 0\leq l\leq N-2,
\end{equation*}
where $\hat{\beta}_{k}$ defined on $(t_{k},t_{k+1}]$ satisfies the following Riccati equation, whose well-posedness is guaranteed by following the same proof of Theorem 7.2 in \cite{book:yong1999},
\begin{equation*}
	\begin{cases}
		\hat{\beta}_{k}'(t)+\hat{\beta}_{k}(t)\hat{B}(t)+\hat{B}^{\top}(t)\hat{\beta}_{k}(t)+\hat{D}_0^{\top}(t)\hat{\beta}_{k}(t)\hat{D}_0(t)+\hat{D}^{\top}(t)\Lambda(t_{k};t)\hat{D}(t)+\hat{Q}(t_{k};t)\\ -(\hat{D}^{\top}(t)\Lambda(t_{k};t)\hat{F}(t)+\hat{D}_0^{\top}(t)\hat{\beta}_{k}(t)\hat{F}_0(t)+\hat{\beta}_{k}(t)\hat{C}(t))(\hat{R}(t_{k};t)+\hat{F}^{\top}(t)\Lambda(t_{k};t)\hat{F}(t)+\hat{F}_0^{\top}(t)\hat{\beta}_{k}(t)\hat{F}_0(t))^{-1}\\(\hat{F}^{\top}(t)\Lambda(t_{k};t)\hat{D}(t)+\hat{F}_0^{\top}(t)\hat{\beta}_{k}(t)\hat{D}_0(t)+\hat{C}^{\top}(t)\hat{\beta}_{k}(t))=0,\quad t\in(t_{k},t_{k+1}),\\
		\hat{\beta}_{k}(t_{k+1})=\tilde{\beta}_{k}(t_{k+1})
	\end{cases}
\end{equation*}
with 
$\tilde{\beta}_{N-1}(t_{N})=\hat{P}(t_{N-1})$,
and $\tilde{\beta}_{l}$ defined on $(t_{l+1},t_{N}]$ satisfies the Lyapunov equation
{\small
\begin{equation}\label{Lyapunov:approximation2}
	\begin{cases}
		\tilde{\beta}_{l}'(t)+\tilde{\beta}_{l}(t)(\hat{B}(t)-\hat{C}(t)\hat{\Theta}^{\Delta}(t))+(\hat{B}(t)-\hat{C}(t)\hat{\Theta}^{\Delta}(t))^{\top}\tilde{\beta}_{l}(t)+(\hat{D}_0(t)-\hat{F}_0(t)\hat{\Theta}^{\Delta}(t))^{\top}\tilde{\beta}_{l}(t)(\hat{D}_0(t)-\hat{F}_0(t)\hat{\Theta}^{\Delta}(t))\\+(\hat{D}(t)-\hat{F}(t)\hat{\Theta}^{\Delta}(t))^{\top}\Lambda(t_{l};t)(\hat{D}(t)-\hat{F}(t)\hat{\Theta}^{\Delta}(t))+\hat{Q}(t_{l};t)+(\hat{\Theta}^{\Delta})^{\top}(t)\hat{R}(t_{l};t)\hat{\Theta}^{\Delta}(t)=0,\, t\in(t_{l+1},t_{N}),\\
		\tilde{\beta}_{l}(t_{N})=\hat{P}(t_{l}),
	\end{cases}
\end{equation}  }	
where, for $0\leq l\leq N-1$ and $t\in[t_{l},t_{l+1}]$,
{\small
\begin{equation*}
	\hat{\Theta}^{\Delta}(t)=\left[\hat{F}^{\top}(t)\Lambda(t_{l};t)\hat{F}(t)+\hat{F}_0^{\top}(t)\hat{\beta}_{l}(t)\hat{F}_0(t)+\hat{R}(t_{l};t)\right]^{-1}\left[\hat{F}^{\top}(t)\Lambda(t_{l};t)\hat{D}(t)+\hat{F}_{0}^{\top}(t)\hat{\beta}_{l}(t)\hat{D}_0(t)+\hat{C}^{\top}(t)\hat{\beta}_{l}(t)\right].
\end{equation*}}
Denote 
\begin{equation*}
	\begin{cases}
		\hat{Q}^{\Delta}(t)=\sum_{k=0}^{N-1}\hat{Q}(t_{k};t)I_{(t_{k},t_{k+1}]}(t),\, t\in[0,T],\\
		\hat{R}^{\Delta}(t)=\sum_{k=0}^{N-1}\hat{R}(t_{k};t)I_{(t_{k},t_{k+1}]}(t),\, t\in[0,T],\\
		\Lambda^{\Delta}(t)=\sum_{k=0}^{N-1}\Lambda(t_{k};t)I_{(t_{k},t_{k+1}]}(t),\, t\in[0,T],\\
		\hat{\beta}^{\Delta}(t)=\sum_{k=0}^{N-1}\hat{\beta}_{k}(t)I_{(t_{k},t_{k+1}]}(t),\, t\in[0,T].
	\end{cases}
\end{equation*}
We can then equivalently derive that
\begin{equation*}
	\begin{cases}
		(\hat{\beta}^{\Delta})'(t)+\hat{\beta}^{\Delta}(t)(\hat{B}(t)-\hat{C}(t)\hat{\Theta}^{\Delta}(t))+(\hat{B}(t)-\hat{C}(t)\hat{\Theta}^{\Delta}(t))^{\top}\hat{\beta}^{\Delta}(t)+\hat{Q}^{\Delta}(t)+(\hat{\Theta}^{\Delta})^{\top}(t)\hat{R}^{\Delta}(t)\hat{\Theta}^{\Delta}(t)\\+(\hat{D}(t)-\hat{F}(t)\hat{\Theta}^{\Delta}(t))^{\top}\Lambda^{\Delta}(t)(\hat{D}(t)-\hat{F}(t)\hat{\Theta}^{\Delta}(t))+(\hat{D}_0(t)-\hat{F}_0(t)\hat{\Theta}^{\Delta}(t))^{\top}\hat{\beta}^{\Delta}(t)(\hat{D}_0(t)-\hat{F}_0(t)\hat{\Theta}^{\Delta}(t))=0,\\
		t\in\cup_{k=0}^{N-1}(t_{k},t_{k+1}),\\
		\hat{\beta}^{\Delta}(t_{N})=\hat{P}(t_{N-1}),\, \hat{\beta}^{\Delta}(t_{N-1})=\tilde{\beta}_{N-2}(t_{N-1}),\cdots,\hat{\beta}^{\Delta}(t_{1})=\tilde{\beta}_{0}(t_{1}).
	\end{cases}
\end{equation*}
Let us introduce 
\begin{equation*}
	\tilde{\beta}^{\Delta}(\tau;t)=\sum_{k=0}^{N-2}\tilde{\beta}_{k}(t)I_{(t_{k+1},t_{k+2}]}(\tau),\,\, 0\leq\tau\leq t\leq T,
\end{equation*}
with the extension on $[0,T]\times[0,T]$ that $\tilde{\beta}^{\Delta}(\tau;t)=\tilde{\beta}^{\Delta}(\tau;\tau)$, $t\in[0,\tau]$. Note that $t\rightarrow\tilde{\beta}^{\Delta}(\tau;t)$ is continuous on $[0,T]$, whereas $t\rightarrow\hat{\beta}^{\Delta}(t)$ might have jumps at $t=t_{k}$.

Similarly, one can show that 
\begin{equation}\label{convergence2}
	\lim\limits_{\Vert\Delta\Vert\rightarrow0}(|\hat{\beta}^{\Delta}(t)-\beta(t;t)|+|\tilde{\beta}^{\Delta}(\tau;t)-\beta(\tau;t)|)=0
\end{equation} 	
 holds uniformly in $(\tau,t)\in[0,T]\times[0,T]$	with $\beta$ satisfying \eqref{riccati2}.

Furthermore, following the same proofs of Lemma \ref{lemma:uniformbound} and Lemma \ref{lemma:tildabeta} with minor modifications, we can obtain the next result.

\begin{lemma}\label{lemma:uniformbound2}
	Let conditions \eqref{condition:pd} and \eqref{condition:monotonicity} hold.  Then  $(\hat{\beta}^{\Delta},\tilde{\beta}^{\Delta})$ is uniformly bounded in $\Delta$, and it holds that
	\begin{equation}\label{tildebeta:continuity2}
		\begin{split}
			|\tilde{\beta}_{l}(t)-\tilde{\beta}_{l-1}(t)|\leq K\bigg[|\hat{P}(t_{l})-\hat{P}(t_{l-1})|+\int_{t}^{T}(|\hat{Q}(t_{l};s)-\hat{Q}(t_{l-1};s)|\\+|\Lambda(t_{l};s)-\Lambda(t_{l-1};s)|+|\hat{R}(t_{l};s)-\hat{R}(t_{l-1};s)|)ds\bigg]
		\end{split}
	\end{equation}
\end{lemma}

The next result also holds, whose proof is similar to that of Proposition \ref{pro:A1}.
\begin{proposition}\label{pro:A2}
	Let conditions \eqref{condition:pd} and \eqref{condition:monotonicity} hold.  Then \eqref{riccati2} admits a unique solution	$\beta\in C(\Delta[0,T];\S^{d}_{+})$.
\end{proposition}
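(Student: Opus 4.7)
The plan is to mirror the argument for Proposition \ref{pro:A1}, treating the already-constructed solution $\Lambda$ to \eqref{riccati1} as a known continuous coefficient entering \eqref{riccati2}. Concretely, I would work with the finite-partition approximations $\hat{\beta}_k$ and $\tilde{\beta}_l$ already introduced before Lemma \ref{lemma:uniformbound2}, together with the associated step functions $\hat{\beta}^\Delta(t)$ and $\tilde{\beta}^\Delta(\tau;t)$. The Riccati building blocks $\hat{\beta}_k$ are well-posed by Theorem 7.2 of \cite{book:yong1999} (positivity of $\hat{R}(t_k;t)+\hat F^{\top}\Lambda(t_k;t)\hat F+\hat F_0^{\top}\hat\beta_k\hat F_0$ is guaranteed by Condition \eqref{condition:pd} together with $\Lambda,\hat\beta_k\geq 0$), and each Lyapunov equation for $\tilde{\beta}_l$ is linear, hence solvable on the full interval.

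Next, I would establish convergence of the approximations. Lemma \ref{lemma:uniformbound2} already gives $\|\hat{\beta}^\Delta\|+\|\tilde{\beta}^\Delta\|\leq K$ uniformly in $\Delta$, and this uniform bound, fed back into the Lyapunov equation \eqref{Lyapunov:approximation2}, yields a uniform bound on $(\tilde{\beta}_l)'$ on each subinterval, hence a uniform Lipschitz estimate in $t$ for $\tilde{\beta}^\Delta(\tau;\cdot)$. Lemma \ref{lemma:tildabeta2} provides the modulus of continuity in $\tau$, using continuity of $\hat{Q},\hat{R},\hat{P}$ and of $\Lambda$ (which is continuous on $\Delta[0,T]$ by Proposition \ref{pro:A1}). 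Applying the extended Arzelà–Ascoli result (Lemma 6.3 in \cite{Yong2017}) extracts a subsequence so that $\tilde{\beta}^\Delta(\tau;t)\to\beta(\tau;t)$ uniformly on $[0,T]\times[0,T]$ for some continuous $\beta$. The same telescoping argument as in the proof of Proposition \ref{pro:A1}, namely
\begin{equation*}
|\hat{\beta}^\Delta(t)-\tilde{\beta}^\Delta(t;t)|\leq |\hat{\beta}_{k+1}(t)-\hat{\beta}_{k+1}(t_{k+2})|+|\tilde{\beta}_{k+1}(t_{k+2})-\tilde{\beta}_k(t_{k+2})|+|\tilde{\beta}_k(t_{k+2})-\tilde{\beta}_k(t)|,
\end{equation*}
combined with $\hat{\beta}_{k+1}(t_{k+2})=\tilde{\beta}_{k+1}(t_{k+2})$, Lemma \ref{lemma:tildabeta2} and the Lipschitz-in-$t$ bound, gives $\hat{\beta}^\Delta(t)\to\beta(t;t)$ uniformly. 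Consequently $\hat\Theta^\Delta\to\hat\Theta$ uniformly (the denominator $\hat R+\hat F^\top\Lambda \hat F+\hat F_0^\top\hat\beta_0\hat F_0$ stays uniformly invertible by \eqref{condition:pd}), and passing to the limit in \eqref{Lyapunov:approximation2} shows $\beta$ solves the equivalent Lyapunov form of \eqref{riccati2}, hence \eqref{riccati2} itself. Nonnegativity $\beta(\tau;t)\in\S^{d}_{+}$ is inherited from nonnegativity of the $\tilde{\beta}_l$.

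Uniqueness follows by the same contraction-mapping argument used in the proof of Proposition \ref{pro:A1}. Writing $\overline{\beta}:=\beta^1-\beta^2$ for two solutions and using that $Z$ and $W$ depend affinely on $\beta$ (with $\Lambda$ already fixed), $\overline\beta$ satisfies a linear non-local ODE of the same structural form as \eqref{uq:ode}:
\begin{equation*}
\overline{\beta}'(\tau;t)+\overline{\beta}(\tau;t)\Ac(t)+\Ac^{\top}(t)\overline{\beta}(\tau;t)+\sum_{i}\Bc_i^{\top}(t)\overline{\beta}(\tau;t)\Bc_i(t)+\sum_{j}\Cc_j(\tau;t)\overline{\beta}(t;t)\Cc_{-j}(\tau;t)=0,\qquad \overline\beta(\tau;T)=0.
\end{equation*}
Freezing the diagonal term $v(t)=\overline\beta(t;t)$, solving the resulting linear ODE, evaluating on the diagonal, and using Gronwall on a small interval $[T-\delta,T]$ produces a map $v\mapsto\tilde V$ that is a contraction on $C([T-\delta,T];\R^{d\times d})$ for $\delta$ small. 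Its unique fixed point is $0$, and iterating backwards yields $\overline\beta\equiv 0$.

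The step I expect to be most delicate is the convergence $\hat\beta^\Delta(t)\to\beta(t;t)$: unlike $\tilde\beta^\Delta(\tau;\cdot)$, the diagonal approximation $\hat\beta^\Delta$ has jumps at the partition points, so one must carefully control the jump size by combining the within-block Lipschitz bound with the $\tau$-modulus from Lemma \ref{lemma:tildabeta2}. This is precisely the telescoping estimate above and, as in the proof of Proposition \ref{pro:A1}, it is the keystone that both produces a genuine solution of \eqref{riccati2} and justifies passing to the limit in the nonlinear coefficient $\hat\Theta^\Delta$.
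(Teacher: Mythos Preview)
Your proposal is correct and follows essentially the same route the paper intends: the paper explicitly states that the proof of Proposition \ref{pro:A2} is similar to that of Proposition \ref{pro:A1} and omits it, and you have faithfully reproduced that argument with the appropriate substitutions (treating $\Lambda$ as a known continuous coefficient, invoking Lemmas \ref{lemma:uniformbound2} and \ref{lemma:tildabeta2} in place of Lemmas \ref{lemma:uniformbound} and \ref{lemma:tildabeta}, and repeating the Arzelà--Ascoli plus telescoping-jump argument and the contraction uniqueness step verbatim).
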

Although equation \eqref{linearode1} is nonlocal, it can be tackled by a standard contraction mapping argument.
 \begin{proposition}\label{pro:A3}
 If there exists a unique solution $(\Lambda,\beta)\in C(\Delta[0,T];\S^{d}_{+})\times C(\Delta[0,T];\S^{d}_{+})$ to \eqref{riccati1}-\eqref{riccati2},  then \eqref{linearode1} admits a unique solution $\gamma\in C(\Delta[0,T];\R^{d})$.
 \end{proposition}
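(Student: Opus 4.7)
The plan is to follow the same strategy as in the uniqueness part of the proof of Proposition \ref{pro:A1}, namely a fixed-point argument on the diagonal $t\mapsto \gamma(t;t)$ on a short subinterval followed by iterative extension. Since $Y(\tau;t)$ depends affinely on $\gamma(\tau;t)$ through the term $(C(t)+\overline{C}(t))^{\top}\gamma(\tau;t)$, and since $\Lambda$, $\beta$ are already known continuous functions with $W(t;t)$ invertible (by the context of Proposition \ref{Pro:riccati}), the equation \eqref{linearode1} can be rewritten as
\begin{equation*}
\gamma(\tau;t)=p(\tau)+\overline{p}(\tau)+\int_{t}^{T}\bigl[\mathcal A(\tau;s)\gamma(\tau;s)+\mathcal B(\tau;s)\gamma(s;s)+\mathcal C(\tau;s)\bigr]ds,
\end{equation*}
where $\mathcal A$, $\mathcal B$, $\mathcal C$ are continuous matrix- or vector-valued functions on $\Delta[0,T]$ built from $B,\overline B,C,\overline C,D,\overline D,D_0,\overline D_0,\vartheta,\vartheta_0,b_0,q,\overline q,r,\overline r,\Lambda,\beta$, together with the bounded factor $W^{-1}(t;t)$.

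The proof I would carry out proceeds in three steps. First, for any candidate diagonal $v\in C([0,T];\R^{d})$, substitute $\gamma(s;s)\mapsto v(s)$ in the integral equation above. For each fixed $\tau$, this yields a standard linear backward ODE in $t$ with continuous coefficients, so it admits a unique solution $\gamma^{v}(\tau;\cdot)\in C([\tau,T];\R^{d})$, and elementary estimates together with the continuity of $\Lambda,\beta,v$ show that $(\tau,t)\mapsto\gamma^{v}(\tau;t)$ is continuous on $\Delta[0,T]$. Second, define the map
\begin{equation*}
\Psi:C([T-\delta,T];\R^{d})\longrightarrow C([T-\delta,T];\R^{d}),\qquad \Psi(v)(t):=\gamma^{v}(t;t).
\end{equation*}
For $v_1,v_2\in C([T-\delta,T];\R^{d})$, writing the integral equation satisfied by $\gamma^{v_1}-\gamma^{v_2}$ and applying Gronwall's inequality yields a bound of the form
\begin{equation*}
\|\Psi(v_1)-\Psi(v_2)\|_{C([T-\delta,T];\R^{d})}\;\leq\;\tilde K\,\delta\,e^{\tilde K\delta}\,\|v_1-v_2\|_{C([T-\delta,T];\R^{d})},
\end{equation*}
with $\tilde K$ depending only on $\|\mathcal A\|_\infty$, $\|\mathcal B\|_\infty$. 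Choosing $\delta>0$ small enough so that $\tilde K\delta e^{\tilde K\delta}<1$ gives a contraction, hence a unique fixed point $v^\ast$ on $[T-\delta,T]$, which together with $\gamma^{v^\ast}$ yields a unique solution of \eqref{linearode1} on $\{(\tau,t)\in\Delta[0,T]:t\in[T-\delta,T]\}$.

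Third, one extends the solution to all of $\Delta[0,T]$ by the usual step-by-step argument: having obtained $\gamma(s;s)$ on $[T-k\delta,T]$, the contribution of the nonlocal term on that interval becomes a known forcing term, and the same contraction argument applied on $[T-(k+1)\delta,T-k\delta]$ (with $\delta$ unchanged, since $\tilde K$ is uniform in $t$) produces $\gamma(s;s)$ on the next slab; after finitely many steps the diagonal is defined on $[0,T]$. Once $\gamma(t;t)$ is known on $[0,T]$, the equation for $\gamma(\tau;t)$ reduces, for each fixed $\tau$, to a linear backward ODE with continuous coefficients and a continuous forcing term, whose solution is clearly continuous jointly in $(\tau,t)$. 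Uniqueness follows from the uniqueness of each ingredient (the fixed point and the linear ODE). The only subtle point, and the main piece of bookkeeping, is checking that the constant $\tilde K$ controlling the contraction is uniform on $[0,T]$, so that the slab length $\delta$ can be kept fixed throughout the iteration; this follows from the continuity of $\Lambda,\beta,W^{-1}(\cdot;\cdot)$ on the compact set $\Delta[0,T]$.
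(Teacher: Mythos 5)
Your proposal is correct and takes essentially the same route as the paper: rewrite \eqref{linearode1}, after plugging in the known $(\Lambda,\beta)$, as a linear non-local ODE whose only non-local dependence is on the diagonal $t\mapsto\gamma(t;t)$, then run a contraction-mapping argument on $C([T-\delta,T];\R^d)$ for the map $v\mapsto\gamma^v(\cdot;\cdot)|_{\text{diag}}$, extend step-by-step using the uniformity of the Gronwall constant, and finally recover the full $\gamma(\tau;\cdot)$ by solving a linear backward ODE for each fixed $\tau$. The paper leaves the slab-by-slab extension and the uniformity of $\tilde K$ implicit (``a standard argument''); you spell them out, but the substance is identical.
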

 \begin{proof}
 	Note that after obtaining $\Lambda$ and $\beta$, \  $\gamma$ satisfies
 	\begin{equation*}
 		\begin{cases}
 			\gamma'(\tau;t)+\overline{\Ac}^{\top}(t)\gamma(\tau;t)+\overline{\Bc}^{\top}(\tau;t)\gamma(t;t)+\overline{\Cc}(\tau;t)=0,\\
 			\gamma(\tau;T)=p(\tau)+\overline{p}(\tau)
 		\end{cases}
 	\end{equation*}
 	for some continuous functions $\overline{\Ac}$, $\overline{\Bc}$ and $\overline{\Cc}$.
 	
 	We will apply the fixed point argument here. Given $v_{1}(\cdot)$ and $v_{2}(\cdot)\in C([0,T];\R^{d})$, let $\gamma_{i}(\tau;t)$ be the solution of 
 	\begin{equation*}
 			\begin{cases}
 			\gamma_{i}'(\tau;t)+\overline{\Ac}^{\top}(t)\gamma_{i}(\tau;t)+\overline{\Bc}^{\top}(\tau;t)v_{i}(t)+\overline{\Cc}(\tau;t)=0,\\
 			\gamma(\tau;T)=p(\tau)+\overline{p}(\tau).
 		\end{cases}
 	\end{equation*}
 	Basic calculations yield that  
 	\begin{equation*}
 			\begin{cases}
 			(\gamma_{1}(\tau;t)-\gamma_{2}(\tau;t))'+\overline{\Ac}^{\top}(t)(\gamma_{1}(\tau;t)-\gamma_{2}(\tau;t))+\overline{\Bc}^{\top}(\tau;t)(v_{1}(t)-v_{2}(t))=0,\\
 			\gamma_{1}(\tau;T)-\gamma_{2}(\tau;T)=0.
 		\end{cases}
 	\end{equation*}
 	As a result,
 	\begin{equation*}
 		|\gamma_{1}(\tau;t)-\gamma_{2}(\tau;t)|\leq K\int_{t}^{T}\left(|\gamma_{1}(\tau;s)-\gamma_{2}(\tau;s)|+|v_{1}(s)-v_{2}(s)|\right)ds.
 	\end{equation*}
 	Using the Gronwall's inequality,  we have

 	\begin{equation}
 		\Vert \gamma_{1}(\tau;\cdot)-\gamma_{2}(\tau;\cdot)\Vert_{C([T-\delta,T];\R^{d})}\leq \tilde{K}\delta\Vert v_{1}(\cdot)-v_{2}(\cdot)\Vert_{C([T-\delta,T];\R^{d})}.
 	\end{equation}
 	It then holds that 
 	\begin{equation}
 		\Vert \tilde{\gamma}_{1}(\cdot)-\tilde{\gamma}_{2}(\cdot)\Vert_{C([T-\delta,T];\R^{d})}\leq \tilde{K}\delta\Vert v_{1}(\cdot)-v_{2}(\cdot)\Vert_{C([T-\delta,T];\R^{d})},
 	\end{equation}
 	where $\tilde{\gamma}_{i}(t):=\gamma_{i}(t;t)$. Thus, if $\delta$ is small, the mapping $v(\cdot)\rightarrow\tilde{\gamma}(\cdot)$ is a contraction mapping on $C([T-\delta,T];\R^{d})$, which guarantees a unique fixed point of $v(\cdot)\rightarrow\tilde{\gamma}(\cdot)$ on $C([0,T];\R^{d})$.  After obtaining the diagonal functions $\gamma(t;t)$, the well-posedness of \eqref{linearode1} can be established by solving a linear ODE.
 \end{proof}
We are now ready to prove Proposition \ref{Pro:riccati}.
\begin{proof}[The proof of Propositions \ref{Pro:riccati}]
	Based on Propositions \ref{pro:A1}, \ref{pro:A2} and \ref{pro:A3}, Equations \eqref{riccati1}-\eqref{linearode1} admit a unique solution. Furthermore,
	it is easy to see that the solution to \eqref{linearode2} can be determined by taking the integral. Thus, the system \eqref{riccati1}-\eqref{linearode2} admits a unique solution. Moreover, we can obtain from Propositions \ref{pro:A1} and \ref{pro:A2} that $(\Lambda(\tau;t),\beta(\tau;t))\in\S^{d}_{+}\times\S^{d}_{+}$ for each $(\tau;t)\in\Delta[0,T]$. Thus,  $(U(t;t),W(t;t))$ lies in $\S^{m}_{>+}\times\S^{m}_{>+}$ for all $t\in[0,T]$, which completes the proof.
\end{proof}

\subsection{The proof of Lemma \ref{lemma:wellposedness:ricattiexample}}\label{appendixB}
For notational convenience, let us consider the transform $\tilde{\Lambda}(\tau;t):=\Lambda(\tau;t)+\frac{q}{2}\lambda(t-\tau)$. Then
\begin{equation}\label{riccatiB}
	\begin{cases}
		\tilde{\Lambda}'(\tau;t)-2k\tilde{\Lambda}(\tau;t)+2\lambda(t-\tau)\tilde{\Lambda}^{2}(t;t)-4\tilde{\Lambda}(\tau;t)\tilde{\Lambda}(t;t)+(\frac{\eta}{2}+kq)\lambda(t-\tau)-\frac{q}{2}\lambda'(t-\tau)=0,\\
		\tilde{\Lambda}(\tau;T)=\frac{(c+q)}{2}\lambda(T-\tau).
	\end{cases}
\end{equation}
We apply the fixed point argument here. Given $v_{i}(\cdot;\cdot)\in C(\Delta[0,T];\R)$ for $i=1,2$, let $\tilde{\Lambda}_{i}$ be the solution of 
\begin{equation}\label{fixedpointb}
	\begin{cases}
		\tilde{\Lambda}_{i}'(\tau;t)-2k\tilde{\Lambda}_{i}(\tau;t)+2\lambda(t-\tau)v^{2}_{i}(t;t)-4v_{i}(\tau;t)v_{i}(t;t)+(\frac{\eta}{2}+kq)\lambda(t-\tau)-\frac{q}{2}\lambda'(t-\tau)=0,\\ 
		\tilde{\Lambda}_{i}(\tau;T)=\frac{(c+q)}{2}\lambda(T-\tau).
	\end{cases}
\end{equation}
 Note that 
 {\small\begin{equation*}
 	\begin{split}
 	0&=\tilde{\Lambda}_{i}'(\tau;t)-2k\tilde{\Lambda}_{i}(\tau;t)+2\lambda(t-\tau)v_{i}^{2}(t;t)-4v_{i}(\tau;t)v_{i}(t;t)\\&+(\frac{\eta}{2}+kq)\lambda(t-\tau)-\frac{q}{2}\lambda'(t-\tau)\\&=\tilde{\Lambda}_{i}'(\tau;t)-2k\tilde{\Lambda}_{i}(\tau;t)-\frac{2}{\lambda(t-\tau)}v_{i}^{2}(\tau;t)+2\left(\frac{1}{\sqrt{\lambda(t-\tau)}}v_{i}(\tau;t)-\sqrt{\lambda(t-\tau)}v_{i}(t;t)\right)^{2}\\&
 	+(\frac{\eta}{2}+kq)\lambda(t-\tau)-\frac{q}{2}\lambda'(t-\tau),
 	\end{split}
 \end{equation*}}
 we have 
 {\small\begin{equation*}
 	\begin{split}
 		\left(e^{2k(T-t)}\tilde{\Lambda}_{i}(\tau;t)\right)'&=e^{2k(T-t)}\tilde{\Lambda}_{i}'(\tau;t)-2k\left(e^{2k(T-t)}\tilde{\Lambda}_{i}(\tau;t)\right)\\
 		&=\frac{2e^{2k(T-t)}}{\lambda(t-\tau)}v_{i}^{2}(\tau;t)-2e^{2k(T-t)}\left(\frac{1}{\sqrt{\lambda(t-\tau)}}v_{i}(\tau;t)-\sqrt{\lambda(t-\tau)}v_{i}(t;t)\right)^{2}\\&-e^{2k(T-t)}\left[(\frac{\eta}{2}+kq)\lambda(t-\tau)-\frac{q}{2}\lambda'(t-\tau)\right].
 	\end{split}
 \end{equation*}}
 One can conclude that 
{\small\begin{equation*}
 	\begin{split}
 		e^{2k(T-t)}\tilde{\Lambda}_{i}(\tau;t)=&\frac{(c+q)}{2}\lambda(T-\tau)+\int_{t}^{T}\bigg[2e^{2k(T-s)}\left(\frac{1}{\sqrt{\lambda(s-\tau)}}v_{i}(\tau;s)-\sqrt{\lambda(s-\tau)}v_{i}(s;s)\right)^{2}\\&+e^{2k(T-s)}\left[(\frac{\eta}{2}+kq)\lambda(s-\tau)-\frac{q}{2}\lambda'(s-\tau)\right]-\frac{2e^{2k(T-s)}}{\lambda(s-\tau)}v_{i}^{2}(\tau;s)\bigg]ds\\&
 		\leq \frac{(c+q)}{2}\lambda(T-\tau)+\int_{t}^{T}\bigg[2e^{2k(T-s)}\left(\frac{1}{\sqrt{\lambda(s-\tau)}}v_{i}(\tau;s)-\sqrt{\lambda(s-\tau)}v_{i}(s;s)\right)^{2}\\&+e^{2k(T-s)}\left[(\frac{\eta}{2}+kq)\lambda(s-\tau)-\frac{q}{2}\lambda'(s-\tau)\right]\bigg]ds.
 	\end{split}
 \end{equation*}}
 Hence, it follows that 
 {\small\begin{equation*}
 	\begin{split}
 		\tilde{\Lambda}_{i}(\tau;t)&\leq\frac{(c+q)}{2}	e^{-2k(T-t)}\lambda(T-\tau)+\int_{t}^{T}\bigg[2e^{2k(t-s)}\left(\frac{1}{\sqrt{\lambda(s-\tau)}}v_{i}(\tau;s)-\sqrt{\lambda(s-\tau)}v_{i}(s;s)\right)^{2}\\&+e^{2k(t-s)}\left[(\frac{\eta}{2}+kq)\lambda(s-\tau)-\frac{q}{2}\lambda'(s-\tau)\right]\bigg]ds
 		\\&\leq C_{1}+\int_{t}^{T}8C_{2}e^{2k(t-s)}\Vert v_{i}\Vert^{2}_{C(\Delta[0,T];\R)}ds+\int_{t}^{T}e^{2k(t-s)}\left[C_{3}+kC_{4}\right]ds\\
 		&\leq C_{5}(1+\frac{1}{k}\Vert v_{i}\Vert^{2}_{C(\Delta[0,T];\R)}),
 	\end{split}
 \end{equation*}}
 where 
 {\small\begin{equation*}
 	\begin{split}
 		&C_{1}:=\frac{c+q}{2}\Vert\lambda\Vert_{C([0,T];\R)},\quad
 		C_{2}:=\max(\Vert\frac{1}{\lambda}\Vert_{C([0,T];\R)},\Vert\lambda\Vert_{C([0,T];\R)}),\\
 		&C_{3}:=\frac{\eta}{2}\Vert\lambda\Vert_{C([0,T];\R)}+\frac{q}{2}\Vert\lambda'\Vert_{C([0,T];\R)},\quad
 		C_{4}:=q\Vert\lambda\Vert_{C([0,T];\R)},\\
 		&C_{5}:=\max(C_{1}+\frac{C_{3}}{2}+\frac{C_{4}}{2},4C_{2}).
 	\end{split}
 \end{equation*}}
  Similarly, we can obtain 
{\small\begin{equation*}
 	\begin{split}
 		\tilde{\Lambda}_{i}(\tau;t)&\geq-\int_{t}^{T}\frac{2e^{2k(t-s)}}{\lambda(s-\tau)}v_{i}^{2}(\tau;s)ds\geq -\frac{1}{k}\Vert\frac{1}{\lambda}\Vert_{C([0,T];\R)}\Vert v_{i}\Vert^{2}_{C(\Delta[0,T];\R)}.
 	\end{split}
 \end{equation*}}
 Finally, we have 
 \begin{equation*}
 	\Vert\tilde{\Lambda}_{i}\Vert_{C(\Delta[0,T];\R)}\leq C(1+\frac{1}{k}\Vert v_{i}\Vert^{2}_{C(\Delta[0,T];\R)}),
 \end{equation*}
 where $C$ is some constant depending on the discount function $\lambda$, parameters $c$, $q$ and $\eta$.
 If $k$ is large such that $k>4C^{2}$ and $\Vert v_{i}\Vert_{C(\Delta[0,T];\R)}\leq 2C$, we  obtain  
 \begin{equation}\label{uniformbound}
 		\Vert\tilde{\Lambda}_{i}\Vert_{C(\Delta[0,T];\R)}\leq C(1+\frac{1}{k}4C^{2})\leq 2C.
 \end{equation}
By \eqref{fixedpointb}, simple calculation yields 
{\small\begin{equation*}
	\begin{cases}
		(\tilde{\Lambda}_{1}-\tilde{\Lambda}_{2})'(\tau;t)-2k	(\tilde{\Lambda}_{1}-\tilde{\Lambda}_{2})(\tau;t)+2\lambda(t-\tau)(v_{1}(t;t)-v_{2}(t;t))(v_{1}(t;t)+v_{2}(t;t))\\-4(v_{1}(\tau;t)v_{1}(t;t)-v_{2}(\tau;t)v_{2}(t;t))=0,\\
	(\tilde{\Lambda}_{1}-\tilde{\Lambda}_{2})(\tau;T)=0.
	\end{cases}
\end{equation*}}
 Using the uniform bound in \eqref{uniformbound}, we  have 
 \begin{equation*}
 	\begin{split}
 		|(\tilde{\Lambda}_{1}-\tilde{\Lambda}_{2})(\tau;t)|&\leq 2k\int_{t}^{T}	|(\tilde{\Lambda}_{1}-\tilde{\Lambda}_{2})(\tau;s)|ds+K\int_{t}^{T}|v_{1}(s;s)-v_{2}(s;s)|ds\\&+K\int_{t}^{T}|v_{1}(\tau;s)-v_{2}(\tau;s)|ds.
 	\end{split}
 \end{equation*}
Then, the Gronwall's inequality yields 
 \begin{equation*}
 	\Vert(\tilde{\Lambda}_{1}-\tilde{\Lambda}_{2})\Vert_{C(\T[T-\delta,T];\R)}\leq K\delta\Vert(v_{1}-v_{2})\Vert_{C(\T[T-\delta,T];\R)},
 \end{equation*}
 where $\T[a,b]$ represents the trapezoidal region $\Delta[0,b]\cap([0,b]\times[a,b])$ and $K$ is a constant that  may differ from line to line. Thus, if $\delta$ is small, there exists a unique solution of \eqref{riccatiB} on the trapezoidal region $\T[T-\delta,T]$. For a  general $\Delta[0,T]$, we can divide it into $\T[T-\delta,T]$, $\T[T-2\delta,T-\delta]$, $\cdots$. One can conclude that there exists a unique solution of \eqref{riccatiB} on $\Delta[0,T]$, and thus \eqref{ricattiexample} admits a unique solution as desired.

{ \subsection{The proof of Proposition \ref{newsect:lemma}}\label{appendixC}

\paragraph{Step 1: Construction of the fixed point mapping} For any \( \phi \in \Xi^{(1+\alpha)}_{[0,T]} \), consider the mapping from \( \phi \) to \( \tilde{\Phi} := \Gamma(\phi) \), where \( \tilde{\Phi} \) is the solution of the following linear local parabolic PDE parameterized by \( \tau \in [0, T] \):
{\small\begin{equation}\label{appc:fixedpoint}
    \begin{split}
        &\partial_t \tilde{\Phi}(\tau; t, x) + b(x) \partial_x \tilde{\Phi}(\tau; t, x) + \frac{\sigma_0^2}{2} \partial_{xx} \tilde{\Phi}(\tau; t, x)  + (C + \overline{C}) \rho \left( \frac{1}{2} \partial_x \phi(t; t, x) \right) \partial_x \phi(\tau; t, x) \\
        &\quad + R(\tau; t) \rho^2 \left( \frac{1}{2} \partial_x \phi(t; t, x) \right) + \overline{R}(\tau; t) r \left( \rho \left( \frac{1}{2} \partial_x \phi(t; t, x) \right) \right)  + \overline{Q}(\tau; t) q(x) = 0, \quad 0 \leq \tau \leq t < T, \, x \in \mathbb{R}, \\
        &\tilde{\Phi}(\tau; T, x) = \overline{P}(\tau) p(x), \quad (\tau, x) \in [0, T] \times \mathbb{R}.
    \end{split}
\end{equation}}
To show the well-posedness of the mapping \(\Gamma\), we first estimate \( |\rho\left( \frac{1}{2} \partial_x \phi(\cdot; \cdot, \cdot) \right)|^{(\alpha)}_{[\tau\vee S,T]\times\R} \) for some \( S \in [0,T) \). For any \( t, t' \in [\tau\vee S,T] \) (without loss of generality, assume \( t < t' \)), $x\in\R$, we have
{\small\begin{equation*}
    \begin{split}
    &\left|\rho \left( \frac{1}{2} \partial_x \phi(t; t, x) \right) - \rho \left( \frac{1}{2} \partial_x \phi(t'; t', x) \right)\right| \\&\leq \left|\rho \left( \frac{1}{2} \partial_x \phi(t; t, x) \right) - \rho \left( \frac{1}{2} \partial_x \phi(t; t', x) \right)\right| + \left|\rho \left( \frac{1}{2} \partial_x \phi(t; t', x) \right) - \rho \left( \frac{1}{2} \partial_x \phi(t'; t', x) \right)\right| \\
    &\leq \frac{1}{2}|\rho'|^{(0)} \left( \left(\sup_{t \in [0,T]}\langle \partial_x \phi(t; \cdot, \cdot) \rangle_{s,[t\vee S,T] \times \mathbb{R}}^{\left( \frac{\alpha}{2} \right)}\right) |t - t'|^{\frac{\alpha}{2}} + \left(\sup_{t \in [0,T]} |\partial_{x} \partial_{\tau} \phi(t; \cdot, \cdot)|^{(0)}_{[t\vee S,T] \times \mathbb{R}}\right) |t - t'| \right),
\end{split}
\end{equation*}}
which implies that 
{\small\begin{equation}\label{appc:est1}
\begin{split}
    \left\langle \rho \left( \frac{1}{2} \partial_x \phi(\cdot; \cdot, \cdot) \right)\right\rangle_{s,[\tau\vee S,T] \times \mathbb{R}}^{\left( \frac{\alpha}{2} \right)}&\leq K(T,\alpha,\epsilon_0)|C+\overline{C}|\left( \sup_{t \in [0,T]}\langle \partial_x \phi(t; \cdot, \cdot) \rangle_{s,[t\vee S,T] \times \mathbb{R}}^{\left( \frac{\alpha}{2} \right)}  + \sup_{t \in [0,T]} |\partial_{x} \partial_{\tau} \phi(t; \cdot, \cdot)|^{(0)}_{[t\vee S,T] \times \mathbb{R}} \right)\\&\leq K(T,\alpha,\epsilon_0)|C+\overline{C}| \|\phi\|_{[S, T]}^{(1+\alpha)},
\end{split}
\end{equation}}
where we have used \eqref{newsect:rhobound}, and \( K(T,\alpha,\epsilon_0) \) is a constant depending on \( T \), \( \alpha \), and \( \epsilon_0 \).

Likewise, for $x,x'\in\R$, $0<|x-x'|\leq1$ and $t\in[\tau\vee S,T]$, we have
{\small 
\begin{equation*}
     \begin{split}
    &\left|\rho \left( \frac{1}{2} \partial_x \phi(t; t, x) \right) - \rho \left( \frac{1}{2} \partial_x \phi(t; t, x') \right)\right| \leq \frac{1}{2}|\rho'|^{(0)}  \sup_{t \in [0,T]}\langle \partial_x \phi(t; \cdot, \cdot) \rangle_{x,[t\vee S,T] \times \mathbb{R}}^{\left( \alpha \right)} |x - x'|^{\alpha},
\end{split}
\end{equation*}}
which implies that 
{\small\begin{equation}\label{appc:est2}
\begin{split}
   \left \langle \rho \left( \frac{1}{2} \partial_x \phi(\cdot; \cdot, \cdot) \right)\right\rangle_{x,[\tau\vee S,T] \times \mathbb{R}}^{\left(\alpha \right)}&\leq K(\epsilon_0)|C+\overline{C}|\left( \sup_{t \in [0,T]}\langle \partial_x \phi(t; \cdot, \cdot) \rangle_{x,[t\vee S,T] \times \mathbb{R}}^{\left( \alpha \right)} \right)\\&\leq K(\epsilon_0)|C+\overline{C}| \|\phi\|_{[S, T]}^{(1+\alpha)}.
\end{split}
\end{equation}}
Note that \(\rho\) has linear growth that $|\rho(x)| \leq |\rho'|^{(0)} |x| + |\rho(0)|$ and it is clear that \(\rho(0)\) is independent of \(C + \overline{C}\) from \eqref{newsect:rhoimplict}. Combining this with \eqref{appc:est1}-\eqref{appc:est2}, we obtain that
\begin{equation}\label{appc:rho}
   \left|\rho\left( \frac{1}{2} \partial_x \phi(\cdot; \cdot, \cdot) \right)\right|^{(\alpha)}_{[\tau\vee S,T]\times\R}\leq K(T,\alpha,\epsilon_0)|C+\overline{C}| \|\phi\|_{[S, T]}^{(1+\alpha)}+|\rho(0)|.
\end{equation}
The estimate of \( | r \left( \rho \left( \frac{1}{2} \partial_x \phi(\cdot; \cdot, \cdot) \right) \right)|^{(\alpha)}_{[\tau\vee S,T]\times\R} \) depends on whether condition (1) or (2) in Assumption \ref{newsect:asspcoef} is satisfied by \( r \).
\begin{itemize}
    \item In the case of \( r \) satisfying condition (1):  
By similar arguments as before, we have
\[
\left| r \left( \rho \left( \frac{1}{2} \partial_x \phi(\cdot; \cdot, \cdot) \right) \right)\right|^{(\alpha)}_{[\tau\vee S,T]\times\R} \leq K(T,\alpha,\epsilon_0,|r'|^{(0)}) |C+\overline{C}|  \|\phi\|_{[S, T]}^{(1+\alpha)} +|r'|^{(0)}|\rho(0)|+|r(0)|.
\]
\item In the case of \( r \) satisfying condition (2):
Here, the quadratic growth condition on \( r \) leads to the estimate  
\[
\left| r \left( \rho \left( \frac{1}{2} \partial_x \phi(\cdot; \cdot, \cdot) \right) \right)\right|^{(\alpha)}_{[\tau\vee S,T]\times\R} \leq K(T,\alpha,\epsilon_0,\tilde{K}) |C+\overline{C}|^2  \left(\|\phi\|_{[S, T]}^{(1+\alpha)}\right)^2 + 4\tilde{K}|\rho(0)|^2+\beta,
\]
 where \( \tilde{K} \) and \( \beta \) are the constants specified in condition (2) of Assumption \ref{newsect:asspcoef}.
\end{itemize}
Then, we can obtain
{\small\begin{align*}
    g(\tau;t,x)&:= (C + \overline{C}) \rho \left( \frac{1}{2} \partial_x \phi(t; t, x) \right) \partial_x \phi(\tau; t, x) 
         + R(\tau; t) \rho^2 \left( \frac{1}{2} \partial_x \phi(t; t, x) \right) \\&\quad+ \overline{R}(\tau; t) r \left( \rho \left( \frac{1}{2} \partial_x \phi(t; t, x) \right) \right)  + \overline{Q}(\tau; t) q(x)
\end{align*}}
satisfies 
{\small
\begin{equation*}
    \begin{split}
        &|g(\tau;\cdot,\cdot)|^{\alpha}_{[\tau\vee S,T]\times\R}\\&\leq |C+\overline{C}| \left|\rho\left( \frac{1}{2} \partial_x \phi(\cdot; \cdot, \cdot) \right)\right|^{(\alpha)}_{[\tau\vee S,T]\times\R}|\partial_x \phi(\tau; \cdot, \cdot)|^{(\alpha)}_{[\tau\vee S,T]\times\R} \\& +  |R(\tau;\cdot)|^{\alpha}_{[\tau\vee S,T]}\left(\left|\rho\left( \frac{1}{2} \partial_x \phi(\cdot; \cdot, \cdot) \right)\right|^{(\alpha)}_{[\tau\vee S,T]\times\R}\right)^2\\&+|\overline{R}(\tau;\cdot)|^{\alpha}_{[\tau\vee S,T]} \left| r \left( \rho \left( \frac{1}{2} \partial_x \phi(\cdot; \cdot, \cdot) \right) \right)\right|^{(\alpha)}_{[\tau\vee S,T]\times\R}+|\overline{Q}(\tau;\cdot)|^{(\alpha)}_{[\tau\vee S,T]}|q|^{(\alpha)}_{\R},
    \end{split}
\end{equation*}
} which implies that 
\begin{itemize}
    \item In the case of \( r \) satisfying condition (1), we have for any $S\in[0,T)$
\begin{equation*}
    \begin{split}
[g]^{(\alpha)}_{[S,T]}=\sup_{\tau\in[0,T]}|g(\tau;\cdot,\cdot)|^{\alpha}_{[\tau\vee S,T]\times\R}\leq K\left(|C+\overline{C}|^2\left(\|\phi\|_{[S, T]}^{(1+\alpha)}\right)^2+ |C+\overline{C}|\left(\|\phi\|_{[S, T]}^{(1+\alpha)}\right) +1 \right),
    \end{split}
\end{equation*}
where \( K \) is a constant depending on \( T \), \( \alpha \), \( \epsilon_0 \), \( |r'|^{(0)} \), \( \rho(0) \), \( r(0) \), and the coefficients \( R(\cdot;\cdot) \), \( \overline{R}(\cdot;\cdot) \), \( \overline{Q}(\cdot;\cdot) \), and \( q(\cdot) \).
    \item In the case of \( r \) satisfying condition (2), we have for any $S\in[0,T)$
    \begin{equation*}
    \begin{split}[g]^{(\alpha)}_{[S,T]}=\sup_{\tau\in[0,T]}|g(\tau;\cdot,\cdot)|^{\alpha}_{[\tau\vee S,T]\times\R}\leq K\left(|C+\overline{C}|^2\left(\|\phi\|_{[S, T]}^{(1+\alpha)}\right)^2+ |C+\overline{C}|\left(\|\phi\|_{[S, T]}^{(1+\alpha)}\right) +1 \right),
    \end{split}
\end{equation*}
 where \( K \) is a constant depending on \( T \), \( \alpha \), \( \epsilon_0 \), \( \tilde{K} \), \( \beta \), \( \rho(0) \), and the coefficients \( R(\cdot;\cdot) \), \( \overline{R}(\cdot;\cdot) \), \( \overline{Q}(\cdot;\cdot) \), and \( q(\cdot) \).
\end{itemize}
Therefore, in both cases, we arrive at
\begin{equation}\label{appc:est3}
      [g]^{(\alpha)}_{[S,T]} \leq K\left(|C+\overline{C}|^2\left(\|\phi\|_{[S, T]}^{(1+\alpha)}\right)^2+ |C+\overline{C}|\left(\|\phi\|_{[S, T]}^{(1+\alpha)}\right) +1 \right).
\end{equation}
for some constant $K$ independent of $C+\overline{C}$ and $S$.
Combining this with
\[
[\overline{P}(\cdot) p(\cdot)]^{(2+\alpha)}_{[0,T]} = |\overline{P}|_{[0,T]}^{(0)} |p|_{\mathbb{R}}^{(2+\alpha)} < \infty,
\]  
the classical theory of parabolic PDE guarantees the existence of a unique solution \( \tilde{\Phi} \in \Theta^{(2+\alpha)}_{[0,T]} \) to \eqref{appc:fixedpoint}; see, e.g., \cite{AF1964,ladyzenskaja_linear_nodate}. Moreover, this solution satisfies the estimate  
\[
[\tilde{\Phi}]^{(2+\alpha)}_{[0,T]} \leq K \left( [g]^{(\alpha)}_{[0,T]} + |\overline{P}|_{[0,T]}^{(0)} |p|_{\mathbb{R}}^{(2+\alpha)} \right).
\]
Furthermore, it is clear that $\tilde{\Phi}_{\tau}(\tau;t,x):=\partial_{\tau}\tilde{\Phi}(\tau;t,x)$ satisfies the following equation
{\small\begin{equation*}
    \begin{split}
        &\partial_t \tilde{\Phi}_{\tau}(\tau; t, x) + b(x) \partial_x \tilde{\Phi}_{\tau}(\tau; t, x) + \frac{\sigma_0^2}{2} \partial_{xx} \tilde{\Phi}_{\tau}(\tau; t, x)  + (C + \overline{C}) \rho \left( \frac{1}{2} \partial_x \phi(t; t, x) \right) \partial_x \partial_{\tau}\phi(\tau; t, x) \\
        &\quad + \partial_{\tau}R(\tau; t) \rho^2 \left( \frac{1}{2} \partial_x \phi(t; t, x) \right) + \partial_{\tau}\overline{R}(\tau; t) r \left( \rho \left( \frac{1}{2} \partial_x \phi(t; t, x) \right) \right)  + \partial_{\tau}\overline{Q}(\tau; t) q(x) = 0, \quad 0 \leq \tau \leq t < T, \, x \in \mathbb{R}, \\
        &\tilde{\Phi}_{\tau}(\tau; T, x) = \overline{P}'(\tau) p(x), \quad (\tau, x) \in [0, T] \times \mathbb{R}.
    \end{split}
\end{equation*}}
Similarly, we can show that \( g_{\tau}(\tau; t, x) := \partial_\tau g(\tau; t, x) \) satisfies the following estimate for any \( S \in [0,T) \):  
\begin{equation}\label{appc:est4}
    \begin{split}
        [g_{\tau}]^{(\alpha)}_{[S,T]} \leq K\left( |C+\overline{C}|^2 \left(\|\phi\|_{[S, T]}^{(1+\alpha)}\right)^2 + |C+\overline{C}| \left(\|\phi\|_{[S, T]}^{(1+\alpha)}\right) + 1 \right),
    \end{split}
\end{equation}
where \( K \) is a constant independent of \( C+\overline{C} \) and \( S \).
This, together with the estimate  
\[
[\overline{P}'(\cdot) p(\cdot)]^{(2+\alpha)}_{[0,T]} = |\overline{P}'|_{[0,T]}^{(0)} |p|_{\mathbb{R}}^{(2+\alpha)} < \infty,
\]  
implies that \( \tilde{\Phi} \in \Xi^{(2+\alpha)}_{[0,T]} \), and consequently, \( \tilde{\Phi} \in \Xi^{(1+\alpha)}_{[0,T]} \) as well. Moreover, by the inequality (5.7) of Proposition 5.1 in \cite{wang_backward_2019}, we obtain  
\begin{equation*}
\begin{split}
     |\tilde{\Phi}(\tau; \cdot, \cdot)|_{[\tau\vee S, T] \times \mathbb{R}}^{(1+\alpha)} + \left|\tilde{\Phi}_{\tau}(\tau; \cdot, \cdot)\right|_{[\tau\vee S,T] \times \mathbb{R}}^{(1+\alpha)}&\leq (|\overline{P}(\tau)|+|\overline{P}'(\tau)|)|p|^{(1+\alpha)}_{\R}+K(T-S)^{\frac{\alpha}{2}}\bigg(|g(\tau;\cdot,\cdot)|^{(\alpha)}_{[\tau\vee S,T]\times\R}\\&+|g_{\tau}(\tau;\cdot,\cdot)|^{(\alpha)}_{[\tau\vee S,T]\times\R}+(|\overline{P}(\tau)|+|\overline{P}'(\tau)|)|p|^{(2+\alpha)}_{\R}\bigg),
\end{split}
\end{equation*}
which, together with estimates \eqref{appc:est3}-\eqref{appc:est4}, leads to  \begin{equation}\label{appc:est5}
    \|\tilde{\Phi}\|_{[0, T]}^{(1+\alpha)}\leq K^*\left( |C+\overline{C}|^2 \left(\|\phi\|_{[0, T]}^{(1+\alpha)}\right)^2 + |C+\overline{C}| \left(\|\phi\|_{[0, T]}^{(1+\alpha)}\right) + 1 \right),
\end{equation}
where \( K^* \) is a constant  
independent of \( C+\overline{C} \). If \( |C+\overline{C}| \) is sufficiently small such that  
\[
|C+\overline{C}|\leq \frac{1}{4K^*} \quad \text{and} \quad \|\phi\|_{[0, T]}^{(1+\alpha)}\leq 2K^*.
\]
Then, from \eqref{appc:est5}, we deduce  
\[
\|\tilde{\Phi}\|_{[0, T]}^{(1+\alpha)}\leq \frac{7}{4}K^*\leq 2K^*.  
\]
Consequently, we have established that the mapping \( \Gamma \) is well-defined and maps the set  
\[
\mathcal{S}:=\left\{\phi\in \Xi^{(1+\alpha)}_{[0,T]}: \|\phi\|_{[0, T]}^{(1+\alpha)}\leq 2K^*\right\}
\]
into itself.

\paragraph{Step 2: Contraction mapping}
Let \( \phi_1, \phi_2 \in \mathcal{S} \) and define \( \Phi_i = \Gamma(\phi_i) \), where \( i = 1, 2 \). It follows that \( \Delta \Phi := \Phi_1 - \Phi_2 \) and \( \Delta \Phi_{\tau} := \partial_{\tau} \Delta \Phi \) satisfy the following equations:  
{\small
\begin{equation*}
    \begin{split}
        &\partial_t \Delta\Phi(\tau; t, x) + b(x) \partial_x \Delta\Phi(\tau; t, x) + \frac{\sigma_0^2}{2} \partial_{xx} \Delta\Phi(\tau; t, x)  + \Delta g(\tau;t,x) = 0, \quad 0 \leq \tau \leq t < T, \, x \in \mathbb{R}, \\
        &\Delta\Phi(\tau; T, x) =0, \quad (\tau, x) \in [0, T] \times \mathbb{R},\\
        &\partial_t \Delta\Phi_{\tau}(\tau; t, x) + b(x) \partial_x \Delta\Phi_{\tau}(\tau; t, x) + \frac{\sigma_0^2}{2} \partial_{xx} \Delta\Phi_{\tau}(\tau; t, x)  + \Delta g_{\tau}(\tau;t,x) = 0, \quad 0 \leq \tau \leq t < T, \, x \in \mathbb{R}, \\
        &\Delta\Phi_{\tau}(\tau; T, x) =0, \quad (\tau, x) \in [0, T] \times \mathbb{R},
    \end{split}
\end{equation*}}
where the terms \( \Delta g(\tau; t, x) \) and \( \Delta g_{\tau}(\tau; t, x) \) are defined by
\begin{equation*}
    \begin{split}
        &\Delta g(\tau;t,x):= (C + \overline{C}) \rho \left( \frac{1}{2} \partial_x \phi_1(t; t, x) \right) \partial_x \phi_1(\tau; t, x)  + R(\tau; t) \rho^2 \left( \frac{1}{2} \partial_x \phi_1(t; t, x) \right) \\&\qquad\qquad\quad+ \overline{R}(\tau; t) r \left( \rho \left( \frac{1}{2} \partial_x \phi_1(t; t, x) \right) \right)-(C + \overline{C}) \rho \left( \frac{1}{2} \partial_x \phi_2(t; t, x) \right) \partial_x \phi_2(\tau; t, x) \\
        &\qquad\qquad\quad - R(\tau; t) \rho^2 \left( \frac{1}{2} \partial_x \phi_2(t; t, x) \right) -\overline{R}(\tau; t) r \left( \rho \left( \frac{1}{2} \partial_x \phi_2(t; t, x) \right) \right), \\
        &\Delta g_{\tau}(\tau;t,x) :=\partial_{\tau}\Delta g(\tau;t,x).
    \end{split}
\end{equation*}
In what follows, we show that for any \( S \in [0,T) \), the estimates hold that  
\begin{align}
    &\sup_{\tau\in[0,T]}|\Delta g(\tau;\cdot,\cdot)|^{(\alpha)}_{[\tau\vee S,T]\times\mathbb{R}} \leq K \bigg(\sup_{\tau\in[0,T]} |\Delta \phi(\tau;\cdot,\cdot)|^{(1+\alpha)}_{[\tau\vee S,T]\times\mathbb{R}} + \sup_{\tau\in[0,T]} |\Delta \phi_{\tau}(\tau;\cdot,\cdot)|^{(1+\alpha)}_{[\tau\vee S,T]\times\mathbb{R}} \bigg), \label{appc:est6} \\
    &\sup_{\tau\in[0,T]}|\Delta g_{\tau}(\tau;\cdot,\cdot)|^{(\alpha)}_{[\tau\vee S,T]\times\mathbb{R}} \leq K \bigg(\sup_{\tau\in[0,T]} |\Delta \phi(\tau;\cdot,\cdot)|^{(1+\alpha)}_{[\tau\vee S,T]\times\mathbb{R}} + \sup_{\tau\in[0,T]} |\Delta \phi_{\tau}(\tau;\cdot,\cdot)|^{(1+\alpha)}_{[\tau\vee S,T]\times\mathbb{R}} \bigg), \label{appc:est7}
\end{align}
where \( K \) is a constant independent of \( S \). We only provide the proof of \eqref{appc:est6}, and the proof of \eqref{appc:est7} follows in a similar fashion and is omitted.

Note that 
{\small\begin{equation}\label{appc:g1}
    \begin{split}
        &\Delta g(\tau;t,x):= (C + \overline{C}) \left(\rho \left( \frac{1}{2} \partial_x \phi_1(t; t, x) \right)-\rho \left( \frac{1}{2} \partial_x \phi_2(t; t, x) \right)\right) \partial_x \phi_1(\tau; t, x)\\&\qquad\qquad\quad + (C + \overline{C}) \rho \left( \frac{1}{2} \partial_x\phi_2(t; t, x) \right) \partial_x \Delta\phi(\tau; t, x)\\&\qquad\qquad\quad+R(\tau; t) \left(\rho \left( \frac{1}{2} \partial_x \phi_1(t; t, x) \right)-\rho \left( \frac{1}{2} \partial_x \phi_2(t; t, x) \right)\right)\left(\rho \left( \frac{1}{2} \partial_x \phi_1(t; t, x) \right)+\rho \left( \frac{1}{2} \partial_x \phi_2(t; t, x) \right)\right) \\&\qquad\qquad\quad+ \overline{R}(\tau; t)\left( r \left( \rho \left( \frac{1}{2} \partial_x \phi_1(t; t, x) \right) \right)-r \left( \rho \left( \frac{1}{2} \partial_x \phi_2(t; t, x) \right) \right)\right),
    \end{split}
\end{equation}}
and
{\small\begin{equation*}
    \begin{split}
        \rho \left( \frac{1}{2} \partial_x \phi_1(t; t, x) \right)-\rho \left( \frac{1}{2} \partial_x \phi_2(t; t, x) \right)=\int_{0}^1 \rho' \left( \frac{\theta}{2} \partial_x \phi_1(t; t, x) +\frac{1-\theta}{2} \partial_x \phi_2(t; t, x)\right) \frac{1}{2}\partial_x \Delta\phi(\tau; t, x)d\theta.
    \end{split}
\end{equation*}}
Because \( r \) is three times continuously differentiable, it follows from \eqref{newsect:rho'} that \( \rho \) is twice continuously differentiable. Consequently, \( \rho' \) is locally Lipschitz continuous.  Therefore, given that \( \phi_1, \phi_2 \in \mathcal{S} \), a similar argument to the proof of \eqref{appc:rho} shows that  
\begin{equation*}
    \left| \rho' \left( \frac{\theta}{2} \partial_x \phi_1(\cdot; \cdot, \cdot) + \frac{1-\theta}{2} \partial_x \phi_2(\cdot; \cdot, \cdot) \right) \right|^{(\alpha)}_{[\tau\vee S,T]\times\R} \leq K,
\end{equation*}
where \( K \) is a constant independent of \( S \) and \( \theta \), but dependent on \( K^* \). Furthermore, we obtain 
{\small\begin{equation}\label{appc:g2}
\begin{split}
        &\left|\rho \left( \frac{1}{2} \partial_x \phi_1(t; t, x) \right)-\rho \left( \frac{1}{2} \partial_x \phi_2(t; t, x) \right) \right|^{(\alpha)}_{[\tau\vee S,T]\times\R} 
        \\&\leq \frac{1}{2}\int_{0}^1   \left| \rho' \left( \frac{\theta}{2} \partial_x \phi_1(\cdot; \cdot, \cdot) + \frac{1-\theta}{2} \partial_x \phi_2(\cdot; \cdot, \cdot) \right) \right|^{(\alpha)}_{[\tau\vee S,T]\times\R} \left|\partial_x\Delta\phi(\tau;\cdot, \cdot)\right|^{(\alpha)}_{[\tau\vee S,T]\times\R}d\theta
        \\&\leq K |\Delta \phi(\tau;\cdot,\cdot)|^{(1+\alpha)}_{[\tau\vee S,T]\times\mathbb{R}}.
\end{split}
\end{equation}}
Similarly, we can deduce that  
\begin{equation}\label{appc:g3}
    \begin{split}
        \left|r \left( \rho \left( \frac{1}{2} \partial_x \phi_1(t; t, x) \right) \right)-r \left( \rho \left( \frac{1}{2} \partial_x \phi_2(t; t, x) \right) \right) \right|^{(\alpha)}_{[\tau\vee S,T]\times\R} \leq K |\Delta \phi(\tau;\cdot,\cdot)|^{(1+\alpha)}_{[\tau\vee S,T]\times\mathbb{R}},
    \end{split}
\end{equation}
 by observing that the function \( h(x) := r \circ \rho(x) \) also has a locally Lipschitz continuous derivative.  Then,  combining \eqref{appc:g1}-\eqref{appc:g3} and utilizing the fact that \( \phi_1, \phi_2 \in \mathcal{S} \), we obtain  
\begin{equation*}
    \begin{split}
      \sup_{\tau\in[0,T]}  \left|\Delta g(\tau;\cdot,\cdot)\right|^{(\alpha)}_{[\tau\vee S,T]\times\R}\leq K\sup_{\tau\in[0,T]}|\Delta \phi(\tau;\cdot,\cdot)|^{(1+\alpha)}_{[\tau\vee S,T]\times\mathbb{R}},
    \end{split}
\end{equation*}
which establishes the desired estimate \eqref{appc:est6}.

Then, applying inequality (5.7) from Proposition 5.1 in \cite{wang_backward_2019} together with estimates \eqref{appc:est6}-\eqref{appc:est7}, we derive  
\begin{equation*}
\begin{split}
   \|\Delta\Phi\|_{[T-\delta, T]}^{(1+\alpha)} &=\sup_{\tau\in[0,T]}|\Delta\Phi(\tau; \cdot, \cdot)|_{[\tau\vee (T-\delta), T] \times \mathbb{R}}^{(1+\alpha)} + \sup_{\tau\in[0,T]} \left|\Delta\Phi_{\tau}(\tau; \cdot, \cdot)\right|_{[\tau\vee (T-\delta),T] \times \mathbb{R}}^{(1+\alpha)}\\&\leq K\delta^{\frac{\alpha}{2}}\bigg(\sup_{\tau\in[0,T]}|\Delta g(\tau;\cdot,\cdot)|^{(\alpha)}_{[\tau\vee (T-\delta),T]\times\R}+\sup_{\tau\in[0,T]}|\Delta g_{\tau}(\tau;\cdot,\cdot)|^{(\alpha)}_{[\tau\vee (T-\delta),T]\times\R}\bigg)\\
   &\leq  K\delta^{\frac{\alpha}{2}}\bigg(\sup_{\tau\in[0,T]}|\Delta \phi(\tau;\cdot,\cdot)|^{(1+\alpha)}_{[\tau\vee (T-\delta),T]\times\R}+\sup_{\tau\in[0,T]}|\Delta \phi_{\tau}(\tau;\cdot,\cdot)|^{(1+\alpha)}_{[\tau\vee (T-\delta),T]\times\R}\bigg)\\
   &\leq K\delta^{\frac{\alpha}{2}}\|\Delta\phi\|_{[T-\delta, T]}^{(1+\alpha)}
\end{split}
\end{equation*}
where  $K$ is a constant that  may differ from line to line. Thus, if $\delta$ is small, there exists a unique solution of \eqref{newsect:nonlocalpde} on the  region $\T[T-\delta,T]\times \R$.  Then, we may repeat the above procedure, to get a unique solution on  $\T[T-2\delta,T-\delta]\times \R$, and we eventually can obtain the existence and unique solution to the nonlocal PDE \eqref{newsect:nonlocalpde}. It is clear that the solution, denoted by \( \Phi \), belongs to the space \( \mathcal{S} \). Combining this with standard estimates for local linear parabolic PDEs, we deduce that \( \|\Phi\|_{[0, T]}^{(2+\alpha)} < \infty \). Consequently, it follows readily that the function \( V(\tau;t,\mu) := \Phi(\tau;t,\overline{\mu}) \) satisfies Assumption \ref{assumption:V}, moreover, the derived strategy \( \hat{\alpha} \) is admissible and is hence a closed-loop equilibrium strategy by Theorem \ref{corollary:verification}.}
\vskip 10pt

\ \\
\noindent
\textbf{Acknowledgements}:  Zongxia  Liang  is supported by the National Natural Science Foundation of China under grant no. 12271290. Xiang Yu and Keyu Zhang are supported by the Hong Kong RGC General Research Fund (GRF) under grant no. 15306523 and grant no. 15211524 and by the Research Centre for Quantitative Finance at the Hong Kong Polytechnic University under grant no. P0042708.

  {\small
	\bibliographystyle{siam}
	\bibliography{ref}
	}
	
\end{document}